\title{Automorphisms of corona algebras, and group cohomology}
\author{Samuel Coskey}
\address{Samuel Coskey\\Department of Mathematics\\Boise State University\\ 1910 University Dr\\Boise, ID}
\email{scoskey@nylogic.org}
\urladdr{boolesrings.org/scoskey}
\author{Ilijas Farah}
\address{Ilijas Farah\\ Department of Mathematics and Statistics\\
  York University\\ 4700 Keele Street\\ North York, Ontario\\ Canada,
  M3J 1P3; and Matematicki Institut, Kneza Mihaila 34, Belgrade,
  Serbia}
\email{ifarah@yorku.ca} 
\urladdr{http://www.math.yorku.ca/$\sim$ifarah}
\thanks{The second author was partially supported by NSERC}
\subjclass{Primary 46L40; Secondary 46L05, 03E50}
\newtheorem{theorem}{Theorem}[section]
\newtheorem{proposition}[theorem]{Proposition}
\newtheorem{lemma}[theorem]{Lemma}
\newtheorem{corollary}[theorem]{Corollary}
\newtheorem{conjecture}[theorem]{Conjecture}
\theoremstyle{definition}
\newtheorem{definition}[theorem]{Definition}
\newtheorem{hypothesis}[theorem]{Hypothesis}
\newtheorem{question}[theorem]{Question}
\newcommand{\cq}{Q}
\newcommand{\cc}{C}
\newcommand{\NN}{\mathbb{N}}\newcommand{\bbN}{\mathbb{N}}
\newcommand{\TT}{\mathbb{T}}\newcommand{\bbT}{\mathbb{T}}
\newcommand{\cUN}{\mathbb{T}^\mathbb{N}}
\newcommand{\bfA}{\mathbf{A}}
\newcommand{\bfF}{\mathbf{F}}
\newcommand{\bfG}{\mathbf{G}}
\newcommand{\cA}{\mathcal{A}}
\newcommand{\cB}{\mathcal{B}}
\newcommand{\cC}{\mathcal{C}}
\newcommand{\cK}{\mathcal{K}}
\newcommand{\cU}{\mathcal{U}}
\newcommand{\cP}{\mathcal{P}}
\DeclareMathOperator{\Hom}{Hom}
\DeclareMathOperator{\Ext}{Ext}
\DeclareMathOperator{\Ad}{Ad}
\DeclareMathOperator{\Aut}{Aut}
\DeclareMathOperator{\im}{im}
\DeclareMathOperator{\dom}{dom}
\newcommand{\ilim}{\varprojlim\nolimits}
\newcommand{\minus}{\mathord{\smallsetminus}}
\newcommand{\id}{\mathrm{id}}
\newcommand{\rs}{\restriction}
\newcommand{\abs}[1]{\left|#1\right|}
\newcommand{\norm}[1]{\left\|#1\right\|}
\newcommand{\set}[1]{\left\{\,#1\,\right\}}
\newenvironment{eqtext}
  {\begin{equation}\begin{minipage}{.9\linewidth}}
  {\end{minipage}\end{equation}\ignorespacesafterend}
\begin{document}

\begin{abstract}
  In 2007 Phillips and Weaver showed that, assuming the Continuum
  Hypothesis, there exists an outer automorphism of the Calkin
  algebra.  (The Calkin algebra is the algebra of bounded operators on
  a separable complex Hilbert space, modulo the compact operators.)
  In this paper we establish that the analogous conclusion holds for a
  broad family of quotient algebras.  Specifically, we will show that
  assuming the Continuum Hypothesis, if $A$ is a separable algebra
  which is either simple or stable, then the corona of $A$ has
  nontrivial automorphisms.  We also discuss a connection with
  cohomology theory, namely, that our proof can be viewed as a
  computation of the cardinality of a particular derived inverse
  limit.
\end{abstract}

\maketitle

\section{Introduction}

The Calkin algebra came into prominence as the ambient structure for
the BDF theory in the seminal work of Brown--Douglas--Fillmore
\cite{BrDoFi:Unitary,BrDoFi:Extensions} (see \cite{Dav:C*} for an
exposition).  In their latter article \cite{BrDoFi:Extensions}, the
authors asked whether the Calkin algebra has any outer automorphisms.
The question remained open for more than thirty years, when it was
shown to be independent from the axioms of set theory by
Phillips--Weaver \cite{PhW} and the second author \cite{Fa:All}.  In
this paper, we are interested in a generalization of this question to
corona algebras (sometimes called outer multiplier algebras).  Both
the original question and the generalization naturally belong to the
program of analyzing outer automorphism groups of quotient structures,
a program pursued by the second author for over a decade (see
\cite{Fa:All}, \cite{Fa:Rigidity}, and \cite{Fa:CH}, among others).

Formally, if $A\subset B(\mathcal H)$ and the annihilator of $A$ is
trivial, then the \emph{multiplier algebra} of $A$ is the set $M(A)$
consisting of all $m\in B(\mathcal H)$ such that both $mA$ and $Am$
are contained in $A$.  It is well-known that $M(A)$ does not depend on the
representation of $A$. Then $A$ is a two-sided, norm-closed, ideal of
$M(A)$ and the \emph{corona} of $A$ is simply the quotient $M(A)/A$.
In this paper, the corona of $A$ will be denoted by $\cq(A)$ in order to 
avoid confusion with an algebra of continuous functions on a
compact Hausdorff space $X$, denoted by $\cc(X)$.

The corona construction can be viewed as a noncommutative analogue of
the \v Cech--Stone remainder of a topological space, since if
$A=\cc(X)$ then the corona of $A$ is given by $\cc(\beta X\minus
X)$.  (Here, $\beta X\minus X$ is the \v Cech--Stone remainder of
$X$).  Corona algebras also generalize the Calkin algebra, since it is
easily seen that the corona of the algebra $\mathcal K$ of compact
operators is exactly $\cq(\mathcal H)=B(\mathcal H)/\mathcal
K$.  Like the Calkin algebra, coronas have played a role in the
literature; for instance they provide the ambient structure for the
Busby invariant for extensions of C*-algebras \cite[\S
II.8.4.4]{Black:Operator} (see also \cite{Pede:Corona}).

We wish to generalize the result that the Calkin algebra can have
outer automorphisms to the case of more general corona algebras.  In
our results, we will generalize ``outer'' to the more restrictive
notion of ``nontrivial''.  In fact, the definition of ``trivial''
below is arguably the most comprehensive definition that is reasonable
(see Section~\ref{S.trivial} for discussion).  The only property of
trivial automorphisms that we shall need in our main results is that
if $A$ is a separable C*-algebra, then $\cq(A)$ has at most
$2^{\aleph_0}$ trivial automorphisms.

\begin{definition}
  \label{Def.trivial}
  An automorphism $\Phi$ of a separable C*-algebra $A$ is said to be
  \emph{trivial}  if the set 
  \[
  \Gamma_\Phi=\{(a,b)\in M(A)^2: \Phi(a/A)=b/A\}
  \]
  is Borel, where $M(A)$ is endowed with the strict topology. 
\end{definition}


As we shall see in Section~\ref{S.trivial}, trivial automorphisms form
a group, all inner automorphisms are trivial, and in case of the
Calkin algebra all trivial automorphisms are inner.  However in some
coronas there are trivial automorphisms which are not inner.  For
example, if $A=\cc_0(X)$ is an abelian C*-algebra then the corona
$\cq(A)$ has no inner automorphisms.  However, many trivial
automorphisms arise from homeomorphisms between co-compact subsets of
$X$. Conjecturally no other automorphisms of such $\cq(A)$ can be
constructed without use of additional set-theoretic axioms such as the
Continuum Hypothesis (see \cite[\S 4]{Fa:AQ}).

We now arrive at the question of whether corona algebras have
nontrivial automorphisms.  In addition to the case of the Calkin
algebra, the analogous question has been answered in several other
categories (see the survey in~\cite{Fa:Rigidity}).  Based on these
answers, it is natural to consider the following two conjectures. 

\begin{conjecture}
  \label{conj:ch}
  The Continuum Hypothesis implies that the corona of every separable,
  non-unital C*-algebra has nontrivial automorphisms.
\end{conjecture}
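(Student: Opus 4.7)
The plan is to generalize the Phillips--Weaver construction for the Calkin algebra to an arbitrary separable non-unital C*-algebra $A$, carrying out a transfinite recursion of length $\omega_1$ (available under CH) that simultaneously builds an automorphism $\Phi$ of $\cq(A)$ and diagonalizes against every potential witness of triviality. Concretely, one would enumerate both a cofinal chain $B_\alpha \subseteq \cq(A)$ of separable subalgebras exhausting the corona and, in parallel, all $\aleph_1 = 2^{\aleph_0}$ Borel subsets of $M(A)^2$ in the strict topology that could conceivably serve as a graph $\Gamma_\Phi$. At stage $\alpha$ one extends the partial $\ast$-isomorphism constructed so far to a separable subalgebra containing $B_\alpha$, while also ensuring that the $\alpha$-th enumerated Borel set fails to coincide with the graph of the final automorphism on some chosen element. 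The union over $\alpha < \omega_1$ would then yield an automorphism whose graph is not Borel, hence nontrivial in the sense of Definition~\ref{Def.trivial}.

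Two technical ingredients drive the induction. First, some form of countable saturation of $\cq(A)$ (for instance countable degree-$1$ saturation, known for coronas of many separable C*-algebras by work of the second author and Hart) is needed so that the partial $\ast$-isomorphisms between separable subalgebras cohere through limit stages and can be enlarged to include any prescribed separable subalgebra. Second, at each successor stage one needs a \emph{twisting step} providing enough flexibility in the choice of extension to disagree with the $\alpha$-th Borel candidate on some previously fixed element. For the Calkin algebra this flexibility comes from the abundance of mutually orthogonal infinite-dimensional projections together with their comparison theory, implemented via a diagonal modification of a unitary.

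The main obstacle is the twisting step in full generality. For stable $A$ one has essentially the same projection calculus as in the Calkin algebra after tensoring with $\cK$, and for simple $A$ Cuntz comparison of positive elements is a viable replacement, so one expects the two cases announced in the abstract to go through by substantive but essentially familiar modifications of Phillips--Weaver. In the fully general case, however, $\cq(A)$ might have very few projections, no comparison theory, and a large center, so one would need a more abstract source of flexibility --- this is presumably where the derived inverse limit perspective mentioned in the abstract enters, reducing non-triviality to showing that a certain derived limit group has cardinality greater than $2^{\aleph_0}$ under CH. Formulating such a uniform twisting principle for an arbitrary separable non-unital $A$ is, I expect, the real heart of the conjecture, and the reason it is presently stated only conjecturally.
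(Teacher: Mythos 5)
You correctly read the statement as a conjecture rather than a theorem: the paper itself only establishes it for the restricted class of algebras in Theorem~\ref{thm:main}, and you identify accurately where the general case breaks down (absence of enough projections or any comparison theory, possibly a large center). So no gap in that sense. However, the partial-result strategy you sketch is not the one the paper uses, and the differences matter.

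Your sketch is essentially the Phillips--Weaver route: enumerate separable subalgebras $B_\alpha$ and Borel candidates for $\Gamma_\Phi$, build a chain of partial $\ast$-isomorphisms by transfinite recursion under full CH, lean on countable (degree-$1$) saturation to get through limit stages, and at each successor insert a ``twist'' to dodge the $\alpha$-th Borel graph. The paper explicitly contrasts this with its own approach and takes a different one: rather than working with a $\sigma$-filtration of $\cq(A)$ by separable subalgebras, it stratifies $\cq(A)$ into \emph{nonseparable} fragments $C_X(A)$ of near-block-diagonal elements indexed by infinite $X\subset\NN$ (Lemma~\ref{lem:stratify}), builds an $\aleph_1$-height binary tree of coherent sequences in $\TT^\NN$ yielding $2^{\aleph_1}$ threads in $\ilim\bfG_X$ (Theorem~\ref{thm:ilim}), sends each thread to a thread of inner automorphisms $\Ad\dot u_{\alpha_X}$ of the layers $C_X(A)$, and patches these into a single automorphism of $\cq(A)$. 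There is no diagonalization against Borel sets at all: nontriviality is obtained by a crude counting argument ($2^{\aleph_1}$ automorphisms produced versus at most $2^{\aleph_0}$ trivial ones). This buys three things your route does not: (i) limit stages become trivial, since coherence is arranged once and for all in the torus (Theorem~\ref{thm:ilim}) rather than re-proved via saturation of the corona; (ii) the set-theoretic input is weakened from CH to $\mathfrak{d}=\aleph_1$ together with $2^{\aleph_0}<2^{\aleph_1}$; and (iii) the needed structural hypothesis on $A$ is made completely explicit and checkable --- an approximate unit $p_n=\sum_{i<n}r_i$ of orthogonal projections (or, in Section~\ref{S.Weak}, positive elements) with all corners $r_iAr_j$ nonzero --- which the paper then verifies for simple, stable, and related algebras in Section~\ref{S.Algebras}. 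By contrast your twisting step, formulated via comparison of projections or positive elements, is less concrete and would still require a separate verification that the resulting automorphism is not merely outer but nontrivial in the sense of Definition~\ref{Def.trivial}.

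One smaller point: you suggest that the derived-inverse-limit viewpoint might provide the missing flexibility in the fully general case. In the paper it does not; Section~\ref{S.Cohomology} is a reinterpretation of the already-proved counting result (identifying $\ilim\bfG_X/\im(\TT^\NN)$ with $\ilim^1\mathcal{F}$), not an independent mechanism applicable when Hypotheses~\ref{hyp:A} and~\ref{hyp:weak} fail. The obstruction to the full conjecture remains exactly where you place it --- finding an analogue of the stratification and of the unitaries $u_\alpha$ when $A$ lacks a suitable approximate unit --- but the cohomological framing does not by itself circumvent it.
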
 

\begin{conjecture}
  \label{conj:rigid}
  Forcing axioms imply that the corona of every separable, non-unital
  C*-algebra has only trivial automorphisms.
\end{conjecture}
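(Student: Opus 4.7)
The plan is to adapt the approach by which Farah \cite{Fa:All} showed, under OCA$_\infty$ + MA, that every automorphism of the Calkin algebra is inner. Given an automorphism $\Phi$ of $\cq(A)$, the aim is to produce a strict-Borel map $\tilde\Phi\colon M(A)\to M(A)$ that lifts $\Phi$, meaning $\Phi(a+A)=\tilde\Phi(a)+A$ for every $a\in M(A)$; the graph of such a $\tilde\Phi$ would then witness that $\Gamma_\Phi$ is Borel, so $\Phi$ is trivial in the sense of Definition~\ref{Def.trivial}.

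Concretely, I would fix a countable subset $D\subset M(A)$ which is strictly dense and approximately closed under the algebraic operations, together with a quasi-central approximate unit $(e_n)\subset A$. For each $d\in D$, pick an arbitrary lift $\tilde\Phi(d)\in M(A)$ of $\Phi(d+A)$. Using the approximate unit to chop a general $a\in M(A)$ into pieces $a_n$ localized between $e_n$ and $e_{n+1}$ and approximating each $a_n$ by an element of $D$, one attempts to define $\tilde\Phi(a)$ as a strict sum $\sum_n\tilde\Phi(a_n)$. The main task is to show that this sum is well-defined modulo $A$, independent of the decomposition, and Borel as a function of $a$; this is the content of a coherence theorem that forcing axioms should provide.

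The coherence step is where OCA and MA enter, following the template of \cite{Fa:All}. One sets up an open coloring on the set of pairs of incompatible approximate lifts assigned to the same element of $D$; under OCA this coloring has no uncountable clique, and MA then allows a diagonalization that stitches the countable skeleton into a globally coherent Borel map. The hard part will be to implement this when $A$ is an arbitrary separable non-unital C*-algebra. The Calkin algebra argument leans heavily on decomposing operators along finite-rank projections coming from a fixed orthonormal basis; for general $A$, and in particular for non-stable examples or for commutative $\cc_0(X)$ whose corona has few projections, one must replace projections by elements of a quasi-central approximate unit and carefully control the resulting approximate-orthogonality defects in the OCA graph. I expect this is the principal obstacle and the reason that the conjecture remains open beyond the simple and stable cases handled by the main theorem of this paper — even the preliminary step of extracting a strictly continuous skeleton from $\Phi$ appears to need new ideas in the projectionless setting.
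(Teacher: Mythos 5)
The statement you were asked to prove is Conjecture~\ref{conj:rigid}, which the paper explicitly leaves \emph{open}; it is not a theorem here, and the authors remark that confirming even this weak form ``would be a remarkable achievement.'' There is accordingly no proof in the paper to compare against, and your proposal --- which you yourself flag as a sketch with unresolved obstacles --- is not a proof either, nor could it be expected to be.

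On the merits, your outline does reproduce the template of Farah's rigidity theorem for the Calkin algebra \cite{Fa:All}: an OCA-based coherence argument gluing together $\varepsilon$-approximate Borel lifts defined on a strictly dense countable skeleton, followed by an MA diagonalization, and your diagnosis of where this breaks down for a general separable non-unital $A$ (no quasicentral approximate unit of projections, so no analogue of Farah's block decomposition along finite-rank projections, especially in the projectionless and abelian cases) is exactly the recognized obstacle. However, you also contain a factual error: you write that ``the simple and stable cases'' of this conjecture are ``handled by the main theorem of this paper,'' but Theorem~\ref{thm:main} is a confirmation of Conjecture~\ref{conj:ch}, which under CH asserts the \emph{existence} of nontrivial automorphisms --- the opposite conclusion under the opposite set-theoretic hypothesis. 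The known special cases of Conjecture~\ref{conj:rigid} that the introduction surveys (Shelah for $\cc(\beta\bbN\smallsetminus\bbN)$, the second author for the Calkin algebra and for certain abelian coronas, McKenney for products of UHF algebras) are results of other papers, not of this one, and none of them covers the simple or stable case in the generality of Theorem~\ref{thm:main}.
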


Of course, Conjecture~\ref{conj:rigid} can be made stronger with a
more restrictive definition of ``trivial automorphism.''  Still, a
confirmation even of this weak form would be a remarkable achievement.
In this paper we will give a confirmation of Conjecture~\ref{conj:ch}
for a large class of corona algebras.

\begin{theorem}
  \label{thm:main}
  Assume that the Continuum Hypothesis holds.  Let $A$ be a
  $\sigma$-unital C*-algebra of cardinality $2^{\aleph_0}$ such that:
  \begin{enumerate}
  \item $A$ is simple, or
  \item $A$ is stable, or
  \item $A\cong B\otimes C$, where $C$ is non-unital and
    simple, or
  \item $A$ has a non-unital, $\sigma$-unital quotient
    with a faithful irreducible representation.
  \end{enumerate}
  Then the corona of $A$ has $2^{2^{\aleph_0}}$ many automorphisms.
\end{theorem}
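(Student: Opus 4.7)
The plan is to construct, assuming CH, a family of $2^{\aleph_1} = 2^{2^{\aleph_0}}$ automorphisms of $\cq(A)$ by a transfinite back-and-forth of length $\omega_1$. The first step is to fix a sequential approximate unit $(e_n)$ of $A$ (existing since $A$ is $\sigma$-unital) and form the mutually orthogonal cutoff projections $p_n = e_{n+1} - e_n$, which sum strictly to $1$ in $M(A)$. Each $x \in M(A)$ is then described by a bounded matrix of entries $p_m x p_n \in A$, realizing $M(A)$ as a subalgebra of a product attached to the corners $p_n A p_n$ and exhibiting $\cq(A)$ as a quotient of an associated inverse system indexed by $\NN$.

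The heart of the argument is an extension lemma: given separable subalgebras $B, B' \subseteq \cq(A)$ and a partial isomorphism $\phi : B \to B'$ built at a previous stage, one can enlarge $B$ by any prescribed countable amount and extend $\phi$ in at least two mutually incompatible ways. Each hypothesis (1)--(4) supplies the required abundance of local twisting unitaries implementing the binary choice: simplicity makes corners large enough to embed matrix algebras freely; stability provides copies of $\cK$ through $A \cong A \otimes \cK$; the tensor hypothesis (3) reduces to the simple case in the second factor; and a faithful irreducible representation of a non-unital, $\sigma$-unital quotient in case (4) allows one to pull the Phillips--Weaver construction for the Calkin algebra back to $\cq(A)$. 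Using CH to enumerate in type $\omega_1$ both a cofinal chain of separable subalgebras of $\cq(A)$ and all the ``commitments'' (pairs of elements that must be mapped in a specified way) that the final automorphism must honor, the back-and-forth then assembles a genuine automorphism of $\cq(A)$.

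The count of $2^{\aleph_1}$ arises because the binary choice in the extension lemma yields two partial isomorphisms that cannot be reconciled at any later stage, so distinct $\omega_1$-branches through the construction produce automorphisms differing already on some separable subalgebra committed at a countable stage. This is precisely the cohomological picture mentioned in the abstract: the extension data form a cocycle on an $\omega_1$-indexed inverse system of local automorphism groups, and the resulting automorphisms are counted by its $\ilim^1$. The main obstacle is proving the extension lemma in cases (1)--(3): unlike case (4), one cannot directly appeal to the concrete Hilbert-space apparatus of the Phillips--Weaver construction, and one must instead manufacture the requisite independent partial isometries from the structural hypothesis and verify that arbitrary local modifications can be absorbed compatibly with all prior commitments. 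A secondary technical point is the coherent handling of countable limit stages, ensuring that inverse limits of the accumulated partial data continue to admit further proper extensions and that the binary branching does not collapse there.
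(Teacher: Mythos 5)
Your proposal sketches a Phillips--Weaver--style back-and-forth of length $\omega_1$ through separable subalgebras of $\cq(A)$, with an extension lemma as the engine. The paper explicitly identifies this as an alternative route and explains why it does \emph{not} follow it: in the Phillips--Weaver construction ``most of the difficulty in the argument lies in showing that [partial automorphisms] can be extended at limit stages,'' which requires carefully maintaining that the partial automorphisms are asymptotically inner. You defer this to a ``secondary technical point,'' but it is the central obstruction, and your proposal gives no mechanism for handling it. More seriously, your extension lemma --- that any partial isomorphism $\phi : B \to B'$ between separable subalgebras of $\cq(A)$ can be enlarged in two incompatible ways --- is asserted rather than proved, and it is not at all routine: an arbitrary $*$-isomorphism between separable subalgebras of a corona need not extend to an automorphism of the whole corona, and the burden of the Phillips--Weaver proof was precisely to identify a tractable class of extendable partial maps and keep the construction inside it. Saying that hypotheses (1)--(4) ``supply the required abundance of local twisting unitaries'' names the problem without solving it.

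The paper sidesteps all of this by a genuinely different decomposition. Instead of separable subalgebras, it stratifies $\cq(A)$ into \emph{nonseparable} fragments $C_X(A)$ for $X\subset\NN$, where $C_X(A)$ is the image of the set $DD_X(A)$ of multipliers that are nearly block-tridiagonal with respect to the blocking of an approximate unit $p_n=\sum_{i<n}r_i$ at the cut-points in $X$ (Lemma~\ref{lem:stratify}). Every element of $\cq(A)$ lies in some $C_X(A)$, so defining an automorphism amounts to defining it coherently on each fragment --- and here the automorphisms considered are explicitly of the form $\Ad \dot u_\alpha$ for $u_\alpha = \sum_i \alpha(i) r_i$ with $\alpha\in\TT^\NN$. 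The kernel of $\alpha\mapsto\Ad\dot u_\alpha\rs C_X(A)$ is exactly the subgroup $\bfF_X\leq\TT^\NN$ (Lemma~\ref{lem:welldefined}), so the entire problem collapses to the purely combinatorial Theorem~\ref{thm:ilim}: the inverse limit $\ilim_{X\in\cU}\bfG_X$ of the quotients $\bfG_X=\TT^\NN/\bfF_X$ has cardinality $2^{\aleph_1}$, for $\cU$ an $\aleph_1$-generated dominating almost-decreasing family. Because the layers are nonseparable and the maps are explicit unitary conjugations, the limit-stage coherence issue that dooms the naive back-and-forth simply never arises, and the cohomological object being counted is $\ilim^1$ of the concrete inverse system $(\bfF_X)_{X\in\cU}$ of abelian groups (Proposition~\ref{prop:lim1}), not of an $\omega_1$-indexed system of ``local automorphism groups'' as you propose. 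A further payoff of the paper's route that your approach does not capture: it uses only $\mathfrak d=\aleph_1$ together with $2^{\aleph_0}<2^{\aleph_1}$, strictly weaker than CH, whereas the Phillips--Weaver back-and-forth genuinely needs CH to enumerate a dense $\aleph_1$-chain of separable subalgebras.
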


In the case when $A$ is separable, the corona of $A$ has at most
$2^{\aleph_0}$ many trivial automorphisms.  By the classical result of
Cantor that $\kappa<2^\kappa$ for every cardinal $\kappa$, and we have
the following consequence.

\begin{corollary}
  Suppose that the Continuum Hypothesis holds, and that $A$ is a
  separable, non-unital, C*-algebra satisfying any of (1)--(4) above.
  Then the corona of $A$ has nontrivial automorphisms.
\end{corollary}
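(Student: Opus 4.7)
The plan is to deduce this Corollary directly from Theorem~\ref{thm:main}, combining it with the cardinality bound on trivial automorphisms recorded in the paragraph just preceding the Corollary. I would first verify that a separable, non-unital C*-algebra $A$ satisfying one of (1)--(4) meets all the hypotheses of Theorem~\ref{thm:main}. Separability implies $\sigma$-unitality automatically. Since $A$ is non-unital it is in particular nonzero, so picking any $a\in A\minus\{0\}$ the one-dimensional subspace $\CC\cdot a$ sits inside $A$, giving $|A|\geq 2^{\aleph_0}$; conversely separability forces $|A|\leq 2^{\aleph_0}$, so $|A|=2^{\aleph_0}$ as required. The structural conditions (1)--(4) are carried over verbatim.

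With the hypotheses confirmed, Theorem~\ref{thm:main} produces $2^{2^{\aleph_0}}$ automorphisms of the corona $\cq(A)$. By the remark immediately preceding the Corollary, at most $2^{\aleph_0}$ of these are trivial. Since Cantor's theorem gives $2^{\aleph_0}<2^{2^{\aleph_0}}$, the set of nontrivial automorphisms must be nonempty---indeed, it must have cardinality $2^{2^{\aleph_0}}$. There is no real obstacle here: all the substantive work is carried out inside Theorem~\ref{thm:main} and in the (separately established) cardinality bound on the set of trivial automorphisms, and the Corollary is simply the cardinality comparison that puts these two facts against one another.
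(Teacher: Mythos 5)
Your proof is correct and matches the paper's own argument: apply Theorem~\ref{thm:main} (noting that a separable non-unital $A$ is $\sigma$-unital and of cardinality $2^{\aleph_0}$) to obtain $2^{2^{\aleph_0}}$ automorphisms of $\cq(A)$, then compare against the bound of $2^{\aleph_0}$ on trivial automorphisms via Cantor's theorem. The only thing you added beyond the paper's one-line remark is the explicit verification of the cardinality hypothesis, which is a reasonable bit of diligence.
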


Before discussing the proof of Theorem~\ref{thm:main}, let us put it
in context by reviewing the cases in which Conjectures~\ref{conj:ch}
and~\ref{conj:rigid} have been answered.  The first result is due to W.\
Rudin \cite{Ru}, who confirmed Conjecture~\ref{conj:ch} for the corona
algebra $\cc(\beta\NN\minus\NN)$.  Of course this conclusion is only
in hindsight; in fact, Rudin established the topological
reformulation: there exist nontrivial homeomorphisms of the \v
Cech--Stone remainder of a locally compact Polish space.

Next, suppose that $A$ has an orthogonal sequence $r_i$ of projections
whose partial sums form an approximate unit.  Then
Conjecture~\ref{conj:ch} has been verified in two extreme cases.
First, if $r_i A r_j \neq \{0\}$ for all $i\neq j$, then it follows
from our methods in Section~\ref{S.Main}.  On the other hand, if $r_i
A r_j = \{0\}$ for all $i\neq j$, then $A=\bigoplus_j r_j A r_j$.  In
this situation the corona of $A$ turns out to be \emph{countably
  saturated} as a metric structure, and the conclusion of
Conjecture~\ref{conj:ch} follows from results of \cite{FaHa:Countable}
and \cite{BYBHU}.  However, it is not difficult to construct a
C*-algebra with an orthogonal sequence of projections whose partial
sums form an approximate unit for which neither method can be
applied.

Another case in which Conjecture~\ref{conj:ch} has been confirmed is
that of an algebra $A$ such that for every separable subalgebra $B$ of
$M(A)$, there is a $B$-quasicentral approximate unit for $A$
consisting of projections and the center of $\cq(A)$ is separable.
Here, an approximate unit $a_\lambda$ is \emph{$B$-quasicentral} if
$[b,a_\lambda]\to 0$ for every $b\in B$ (see
\cite[Corollary~2.14]{FaHa:Countable}).  The final case is in the
projectionless domain, where the topological reformulation was
confirmed in the case when $A=\cc_0([0,1))$ by a result of J.\ C.\ Yu
(see \cite[\S 9]{Hart:Cech}).

The problem of establishing Conjecture~\ref{conj:rigid} for coronas of
separable C*-algebras is much more interesting (and more
challenging!), and has only been verified in a few special cases.  The
case of $\cc(\beta\bbN\smallsetminus\bbN)$ was established by Shelah
in \cite{Sh:Proper}.  Once again, this result is only in hindsight:
Shelah was working in a Boolean-algebraic reformulation given by Stone
duality in the real rank zero case.  Following this, the second author
handled the case of several other abelian algebras in \cite{Fa:AQ},
and as we have mentioned, the Calkin algebra in \cite{Fa:All}.  Most
recently, McKenney addressed the case of products of UHF algebras
\cite{McK:UHF}. The analogues of 
Conjecture~\ref{conj:ch} and Conjecture~\ref{conj:rigid}
for Calkin algebras associated to  nonseparable Hilbert spaces
were considered in \cite{FaMcKSc:Some} and  \cite{Fa:AllAll}, respectively.

We now turn to an outline of the proof of Theorem~\ref{thm:main}, and
a discussion of how it is organized throughout the coming sections.
In Section~\ref{S.Build}, we construct an inverse system of abelian
groups, each of which consists of (equivalence classes of) elements of
the infinite torus $\TT^\NN$.  We then show, by building a complete
binary tree consisting of partial threads through this inverse system,
that the inverse limit will have many elements which are nontrivial in
the sense that they do not arise from constant threads.  This sort of
construction is familiar in category theory, and in
Section~\ref{S.Cohomology} we elaborate upon this connection.

Next, in Section~\ref{S.Main}, we construct a map from the inverse
limit built in Section~\ref{S.Build} into the automorphism group of
$\cq(A)$.  This is done by stratifying $\cq(A)$ into layers, and
identifying elements of the abelian groups of Section~\ref{S.Build}
with automorphisms of the layers.  In Section~\ref{S.Main} we also
isolate a technical condition on the C*-algebra $A$ to ensure that the
resulting map is one-to-one.  In Section~\ref{S.Weak}, we give a
second, weaker technical condition which again ensures that the map is
one-to-one.  Finally, in Section~\ref{S.Algebras}, we conclude the
proof of Theorem~\ref{thm:main} by verifying that each of its
alternative hypotheses (1)--(4) implies that one of the technical
assumptions of Sections~\ref{S.Main} or~\ref{S.Weak} is satisfied.

This argument is similar in one aspect to the proof in the case of the
Calkin algebra found in \cite{PhW}.  As in that proof, we end up with
a complete binary tree of height $\aleph_1$ consisting of partial
automorphisms of $\cq(A)$, and the branches of the tree determine
distinct automorphisms.  However, in \cite{PhW} the partial
automorphisms are defined on separable subalgebras, and most of the
difficulty in the argument lies in showing that they can be extended
at limit stages.  (It requires ensuring that the automorphisms are
asymptotically inner.)  In our approach, based on \cite{Fa:All}, we
stratify $\cq(A)$ into nonseparable layers. As we shall see, this
makes the limit stages much easier to handle.

Another difference with the proof in \cite{PhW} is that we will not
require the full strength of the Continuum Hypothesis (which in the
future we will abbreviate CH).  As a consequence, we can conclude that
the combinatorics of our proof are quite different from the
essentially model-theoretic methods used in Rudin's proof in the case
of $\cc(\beta\bbN\minus \bbN)$.  Indeed, 
in the forcing extension constructed in  \cite[Corollary~2]{FaSh:Trivial}
all automorphisms of $\cc(\beta\bbN\minus \bbN)$ are trivial, while
$\mathfrak d=\aleph_1$ and $2^{\aleph_0}<2^{\aleph_1}$.  And as we
shall see in the proof, the latter two assumptions suffice to
establish Theorem~\ref{thm:main} and its corollary.

Lastly, we would like to mention the important and related question of
whether there exists an automorphism of the Calkin algebra which is
$K$-theory reversing.  Unfortunately, our techniques cannot be used to
answer it, since the automorphisms we construct are locally trivial.
An answer to this question would require an extension of the
model-theoretic methods of~\cite{FaHa:Countable}.

We are indebted to N. Christopher Phillips for valuable conversations and 
we would like to acknowledge Paul McKenney for his feedback on an
early version of this paper.

\section{Building blocks for automorphisms}
\label{S.Build} 

Following \cite[\S1]{Fa:All}, we begin by constructing coherent
sequences (of uncountable length) of elements of the infinite torus
$\TT^\NN$.  In later sections, elements of $\TT^\NN$ will feature
prominently as the nonzero entries of diagonal unitary matrices, and
the coherent sequences constructed here will ultimately be patched
together to define automorphisms of C*-algebras.

To explain what is meant by ``coherent'', let us introduce a family of
pseudometrics $\Delta_I$ on $\TT^\NN$, where $I$ ranges over the
finite subsets of $\NN$.  The connection between $\Delta_I$ and
automorphisms of C*-algebras will become clear in Lemma~\ref{lem:key},
where it will be shown that each $\Delta_I$ corresponds to the
distance between two automorphisms when restricted to a certain
fragment of the corona algebra.

Initially, we define $\Delta_I$ when $I=\set{i,j}$ consists of just
two elements.  For $\alpha,\beta\in\TT^\NN$ we let
\[\Delta_{\set{i,j}}(\alpha,\beta)
=\abs{\alpha(i)\overline{\alpha(j)}-\beta(i)\overline{\beta(j)}}\;.
\]
Then, for $I$ a finite subset of $\NN$ we write
\[\Delta_I(\alpha,\beta)=\max_{i,j\in I}\Delta_{\set{i,j}}(\alpha,\beta)\;.
\]
It is clear that the $\Delta_I$ satisfy the triangle inequality
\[\Delta_I(\alpha,\gamma)\leq\Delta_I(\alpha,\beta)+\Delta_I(\beta,\gamma)\;,
\]
and therefore $\Delta_I$ is a pseudometric on $\TT^\NN$.  In most
cases, we will only need to evaluate $\Delta_I(\alpha,1)$, and this is
easily seen to be the diameter of the set of values of $\alpha$ on $I$.

We have the following inequality relating two different $\Delta_I$'s.

\begin{lemma} 
  \label{L.IJ}
  Let $I$ and $J$ be finite subsets of $\NN$.  Then for any fixed
  $i_0\in I$ and $j_0\in J$ we have
  \[\Delta_{I\cup J}(\alpha,\beta)
  \leq\Delta_I(\alpha,\beta)+\Delta_J(\alpha,\beta)
  +\Delta_{\set{i_0,j_0}}(\alpha,\beta)\;.
  \]
\end{lemma}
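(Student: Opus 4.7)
The plan is to reduce the inequality to a pointwise statement about the functions $f_{p,q}(\alpha) := \alpha(p)\overline{\alpha(q)}$, which encode the $\Delta$-pseudometrics via
\[
\Delta_K(\alpha,\beta) = \max_{p,q \in K} |f_{p,q}(\alpha) - f_{p,q}(\beta)|.
\]
Fix $p,q \in I \cup J$ witnessing the maximum defining $\Delta_{I \cup J}(\alpha,\beta)$. There are three cases. If $p,q$ both lie in $I$, then $|f_{p,q}(\alpha)-f_{p,q}(\beta)| \le \Delta_I(\alpha,\beta)$, and symmetrically when they both lie in $J$; in either case the right-hand side of the claim trivially dominates. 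The interesting case is when $p$ and $q$ are split between $I$ and $J$.

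The key observation is that since $|\alpha(r)| = |\beta(r)| = 1$ for every $r$, the functions $f_{p,q}$ compose multiplicatively in the middle variable:
\[
f_{p,q}(\alpha) = f_{p,r}(\alpha)\, f_{r,q}(\alpha)
\]
for any intermediate $r$, and analogously through a pair of intermediate indices. In particular, assuming without loss of generality $p \in I$ and $q \in J$ (the other split case is handled by complex conjugation), route through $i_0$ and $j_0$ to get
\[
f_{p,q}(\alpha) = f_{p,i_0}(\alpha)\, f_{i_0,j_0}(\alpha)\, f_{j_0,q}(\alpha),
\]
and identically for $\beta$.

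Now apply the standard telescoping identity for a triple product, $abc - a'b'c' = (a-a')bc + a'(b-b')c + a'b'(c-c')$, together with the fact that every factor $f_{r,s}(\gamma)$ has modulus one. This yields
\[
|f_{p,q}(\alpha) - f_{p,q}(\beta)|
\le |f_{p,i_0}(\alpha)-f_{p,i_0}(\beta)| + |f_{i_0,j_0}(\alpha)-f_{i_0,j_0}(\beta)| + |f_{j_0,q}(\alpha)-f_{j_0,q}(\beta)|.
\]
Since $p,i_0 \in I$ and $j_0,q \in J$, the three terms are bounded by $\Delta_I(\alpha,\beta)$, $\Delta_{\{i_0,j_0\}}(\alpha,\beta)$, and $\Delta_J(\alpha,\beta)$ respectively, giving the desired inequality.

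There is no real obstacle here; the only thing to be careful about is the case analysis on where $p,q$ lie (including when $I \cap J \ne \emptyset$, which is harmlessly absorbed by the two pure cases), and ensuring the triangle-style bound for the triple product, which relies essentially on the unitarity $|\alpha(r)| = 1$.
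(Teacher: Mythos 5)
Your proof is correct and takes essentially the same route as the paper: both reduce to a ``triangle inequality in the indices'' for $\Delta_{\{\cdot,\cdot\}}(\alpha,\beta)$ and then chain $p \to i_0 \to j_0 \to q$. The only cosmetic difference is mechanical: the paper first rewrites $\Delta_{\{i,j\}}(\alpha,\beta)=\bigl|\alpha(i)\overline{\beta(i)}-\alpha(j)\overline{\beta(j)}\bigr|$, after which the index triangle inequality is the literal triangle inequality for the complex numbers $g(r)=\alpha(r)\overline{\beta(r)}$, whereas you reach the same bound via the multiplicativity of $f_{p,q}$ and a telescoping estimate for products of unimodular scalars --- the same computation in different clothing.
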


\begin{proof}
  We begin by observing that $\Delta_{\set{i,j}}(\alpha,\beta)$ can
  also be written
  \[\Delta_{\set{i,j}}(\alpha,\beta)
  =\abs{\alpha(i)\overline{\beta(i)}-\alpha(j)\overline{\beta(j)}}\;.
  \]
  It follows that for fixed $\alpha,\beta$, we have that
  $\Delta_{\set{\cdot,\cdot}}(\alpha,\beta)$ satisfies the following
  triangle inequality in the indices:
  \[\Delta_{\set{i,k}}(\alpha,\beta)
  \leq\Delta_{\set{i,j}}(\alpha,\beta)+\Delta_{\set{j,k}}(\alpha,\beta)\;.
  \]
  Thus, for $i\in I$ and $j\in J$, we have
  \[\Delta_{\set{i,j}}(\alpha,\beta)
  \leq\Delta_{\set{i,i_0}}(\alpha,\beta)
  +\Delta_{\set{i_0,j_0}}(\alpha,\beta)
  +\Delta_{\set{j_0,j}}(\alpha,\beta)\;,
  \]
  and the desired inequality follows.
\end{proof} 

For an infinite subset $X\subset\bbN$, we will need the following
notation.  First, for $j\in\NN$ let $n(X,j)$ denote the
$j^{\mathrm{th}}$ element in the increasing enumeration of $\{0\}\cup
X$.  Next, let
\[I(X,j)=\big[\,n(X,j), n(X,j+1)\,\big)
\]
denote the $j^{\mathrm{th}}$ interval in the natural partition of
$\bbN$ into finite intervals with endpoints in $X$.  We are now
prepared to make our key definition.

\begin{definition}\label{D.1.2}
  For $X\subset\NN$, we let $\bfF_X$ denote the subgroup of $\TT^\NN$
  defined by
  \[\bfF_X=\set{\alpha\in \cUN\;\vline\;
    \lim_{j\to \infty} \Delta_{I(X,j)\cup I(X,j+1)} (\alpha, 1)=0}\;,
  \]
  and let $\bfG_X$ denote the quotient
  \[\bfG_X=\cUN/\bfF_X\;.
  \]
  (We will consider these groups as discrete.)
\end{definition}

As we shall see in Lemma~\ref{lem:welldefined}, the elements of
$\bfF_X$ will give rise to automorphisms of a corona algebra which are
trivial on a certain fragment of that algebra.
For future convenience, we presently note that Lemma~\ref{L.IJ}
implies that $\bfF_X$ can also be written as
\[\bfF_X=\set{\alpha\in\TT^\NN\;\vline\;
  \begin{aligned}
    &\lim_{j\to \infty}\Delta_{I(X,j)}(\alpha, 1)=0\text{, and}\\
    &\lim_{j\to \infty}\Delta_{\set{n(X,j),n(X,j+1)}}(\alpha,1)=0
  \end{aligned}
}\;.
\]

Note that if $Y\subset X$ then every $I(Y,j)$ can be written as
$\bigcup_{k\in L}I(X,k)$ for some finite set $L$.  Since we have
$\Delta_{I(Y,j)}\geq \max_{k\in L} \Delta_{I(X,k)}$, it follows that
$\bfF_Y\subset\bfF_X $.  Moreover, if the symmetric difference of $Y$
and $Z$ is finite then $\bfF_Y=\bfF_Z$.  Therefore, we have the
following result.

\begin{proposition}
  \label{prop:incl}
  If $Y\subset^*X$ then $\bfF_Y\subset\bfF_X $.  Hence, also
  $\bfG_X$ is a quotient of $\bfG_Y$.
\end{proposition}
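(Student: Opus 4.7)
The plan is to reduce the general case $Y \subset^* X$ to two easier sub-claims: (a) if $Y \subset X$ then $\bfF_Y \subset \bfF_X$; and (b) if the symmetric difference $Y\triangle Z$ is finite then $\bfF_Y = \bfF_Z$. Setting $Z := Y \cap X$ one has $Z \subset X$ and $Y\triangle Z = Y \minus X$ finite, so combining (a) and (b) yields $\bfF_Y = \bfF_Z \subset \bfF_X$, as required. Once this inclusion is in hand, the quotient assertion is automatic: the identity on $\cUN$ descends to a surjection $\bfG_Y \twoheadrightarrow \bfG_X$, exhibiting $\bfG_X$ as a quotient of $\bfG_Y$.

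For sub-claim (a) I would work from the original form of the definition of $\bfF_X$, which asks that $\Delta_{I(X,j)\cup I(X,j+1)}(\alpha, 1) \to 0$. Fix $\alpha \in \bfF_Y$ and consider the window $I(X,j) \cup I(X,j+1) = [n(X,j), n(X,j+2))$. The three points $n(X,j)$, $n(X,j+1)$, $n(X,j+2)$ are consecutive in $\{0\} \cup X$, so no element of $Y \subset X$ lies strictly between any two of them. A short case analysis on which of these three points belong to $\{0\} \cup Y$ shows that $I(X,j) \cup I(X,j+1)$ is contained in $I(Y,k_j) \cup I(Y,k_j+1)$ for some index $k_j$, and $k_j \to \infty$ as $j \to \infty$ because $n(X,j) \to \infty$. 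Hence
\[
\Delta_{I(X,j)\cup I(X,j+1)}(\alpha, 1) \le \Delta_{I(Y,k_j)\cup I(Y,k_j+1)}(\alpha, 1) \longrightarrow 0,
\]
so $\alpha \in \bfF_X$. For sub-claim (b), choose $N$ with $Y \cap [N, \infty) = Z \cap [N, \infty)$; then for sufficiently large $j$ the increasing enumerations of $\{0\} \cup Y$ and $\{0\} \cup Z$ differ only by a constant shift $c$, so $I(Y,j) = I(Z, j+c)$, and the asymptotic condition defining membership in $\bfF_Y$ is unaffected by this finite reindexing.

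The only genuine content lies in the case analysis in (a), which verifies that every two-block window indexed by $X$ is swallowed by a two-block window indexed by $Y$; this is what lets the pseudometric on the coarser partition dominate the pseudometric on the finer one. Everything else is bookkeeping, essentially matching the observation already recorded in the paragraph preceding the proposition.
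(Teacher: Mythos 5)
Your proof is correct and takes essentially the same approach as the paper: reduce to the honest inclusion $Y\subset X$ (handling finite modifications separately), then use the fact that the $Y$-partition refines the $X$-partition together with monotonicity of $\Delta_I$ under inclusion of index sets. The one small difference is that you work directly with the original two-block definition of $\bfF_X$, while the paper's brief remark preceding the proposition appeals to the two-condition reformulation; your version arguably handles the boundary condition $\Delta_{\set{n(X,j),n(X,j+1)}}(\alpha,1)\to 0$ a bit more explicitly, but the underlying idea is identical.
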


(Here, $\subset^*$ denotes the \emph{almost inclusion} relation, that
is, $Y\subset^*X$ if and only if $Y\minus X$ is finite.)

What is more, it is easy to see that for $Y\subset^*X$ we have the
following commutative diagram whose rows are exact sequences.
\begin{equation}
\label{I.short}
\begin{diagram}[h=1cm,w=1cm]
 0 & \rTo & \bfF_Y & \rTo & \cUN & \rTo & \bfG_Y & \rTo & 0 \\
 & & \dTo & & \dTo & & \dTo \\ 
 0 & \rTo & \bfF_X & \rTo & \cUN & \rTo & \bfG_X & \rTo & 0
\end{diagram}
\end{equation}
Here, the arrow $\bfF_Y\rTo\bfF_X$ is the inclusion map, the arrow
$\TT^\NN\rTo\TT^\NN$ is the identity, and the arrow $\bfG_Y\rTo\bfG_X$
is the quotient map.

For the remainder of this section, we will work with a family
$\cU\subset\cP(\bbN)$ consisting of infinite sets, which we will
assume has the following properties.

\begin{hypothesis}\
  \label{hyp:U}
  \begin{itemize}
  \item $\cU$ is closed under finite intersections and under finite
    modifications of its elements.
  \item $\cU$ is $\aleph_1$-generated, that is, there exist
    $X_\xi\in\cU$, $\xi<\omega_1$, such that for any $X\in\cU$ there
    exists $\xi$ with $X_\xi\subset^* X$.
  \item The family of enumerating functions $n(X,\cdot)$ of elements
    $X\in\cU$ forms a \emph{dominating family}.  That is, for any
    $f\in\NN^\NN$ there exists $X\in\cU$ such that $f(i)\leq n(X,i)$
    for all but finitely many $i\in \bbN$.
  \end{itemize}
\end{hypothesis}

We remark that the assumption that such a family $\cU$ exists follows
from CH.  In fact, it follows from the axiom $\mathfrak d=\aleph_1$,
which means: the least cardinality of a dominating family is exactly
$\aleph_1$.  This axiom is strictly weaker than CH---see \cite[p.\
629]{Fa:All} for a discussion.

We are now ready to present the main technical result of this section.
Recall that by Proposition~\ref{prop:incl} and the discussion
following it, the groups $\bfG_X$, for $X\in \cU$, form an inverse
system indexed by $\cU$ with respect to the reverse almost inclusion
ordering $\supset^*$.  The main result is simply to count the elements
of $\ilim_{X\in\cU}\bfG_X$.

\begin{theorem}
  \label{thm:ilim}
  If $\cU$ satisfies Hypothesis~\ref{hyp:U}, then
  $\ilim_{X\in\cU}\bfG_X$ has cardinality $2^{\aleph_1}$.
\end{theorem}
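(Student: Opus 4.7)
The plan is to establish matching bounds of $2^{\aleph_1}$ on $|\ilim_{X\in\cU}\bfG_X|$. For the upper bound, the $\aleph_1$-generation hypothesis means $\{X_\xi\}_{\xi<\omega_1}$ is coinitial in $(\cU,\supseteq^*)$, so the inverse limit is determined by its values on this subfamily and injects into $\prod_{\xi<\omega_1}\bfG_{X_\xi}$, of cardinality at most $(2^{\aleph_0})^{\aleph_1}=2^{\aleph_1}$.

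For the lower bound, the plan is to build a complete binary tree of representatives. First, by intersecting generators and (under our hypotheses) selecting a pseudointersection at each countable limit, refine $\{X_\xi\}$ to a strictly $\supseteq^*$-decreasing coinitial $\omega_1$-chain, still denoted $X_\xi$. Then recursively construct $\alpha_s\in\TT^\NN$ for $s\in 2^{<\omega_1}$ satisfying the coherence condition
\[
s\subseteq t\implies \alpha_t/\alpha_s\in\bfF_{X_{|s|}}.
\]
Each branch $b\in 2^{\omega_1}$ will then determine an element of the inverse limit by assigning to each $X\in\cU$ the coset $[\alpha_{b\restriction\xi}]_X\in\bfG_X$ for some $\xi$ with $X_\xi\subseteq^* X$; coherence together with Proposition~\ref{prop:incl} makes this well-defined and compatible across the system.

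At a successor $\xi\to\xi+1$, choose $\beta_\xi\in\bfF_{X_\xi}\minus\bfF_{X_{\xi+1}}$ (available because, with $X_{\xi+1}\subsetneq^* X_\xi$, the $X_{\xi+1}$-intervals are honestly coarser and a function constant on each $X_\xi$-interval yet varying across them witnesses the proper inclusion) and set $\alpha_{s^\frown 0}=\alpha_s$, $\alpha_{s^\frown 1}=\alpha_s\cdot\beta_\xi$. For any two distinct branches $b,b'$ first differing at level $\xi$, the later values $\alpha_{b\restriction\eta}/\alpha_{b'\restriction\eta}$ differ from $\beta_\xi^{\pm 1}$ only by elements of $\bfF_{X_{\xi+1}}$, so the two inverse-limit elements already disagree at coordinate $X_{\xi+1}$; this produces $2^{\aleph_1}$ distinct elements.

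The hard part is the limit stage. At a limit $\lambda$ with fixed cofinal $\xi_n\uparrow\lambda$, we must define $\alpha_s$ for each $s\in 2^\lambda$ from the $\alpha_{s\restriction\xi_n}$ so that $\alpha_s/\alpha_{s\restriction\xi_n}\in\bfF_{X_{\xi_n}}$ for every $n$ simultaneously. The plan is to glue piecewise: on each interval $I(X_\lambda,k)$, set $\alpha_s$ equal to $\alpha_{s\restriction\xi_{n(k)}}$ for a slowly increasing function $n(k)\to\infty$. The dominating property is crucial here—we use it to choose $X_\lambda$ so that $n(X_\lambda,\cdot)$ grows fast enough that for each $n$, every sufficiently long $I(X_{\xi_n},j)$ is contained in a single $I(X_\lambda,k)$ with $n(k)\geq n$. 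On such intervals $\alpha_s/\alpha_{s\restriction\xi_n}$ then locally equals $\alpha_{s\restriction\xi_{n(k)}}/\alpha_{s\restriction\xi_n}$, which lies in $\bfF_{X_{\xi_n}}$ by transitivity of coherence (using $\bfF_{X_{\xi_m}}\subseteq\bfF_{X_{\xi_n}}$ for $m\geq n$); Lemma~\ref{L.IJ} controls the contribution from switching values between consecutive $I(X_\lambda,\cdot)$-intervals. Meeting all countably many coherence obligations at once is the delicate bookkeeping, following the coherent-sequence method adapted from \cite{Fa:All}.
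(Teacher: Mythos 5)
The overall architecture of your argument matches the paper's: fix a generating $\supseteq^*$-decreasing $\omega_1$-chain, build a complete binary tree of coherent representatives $\alpha_s$, split at successors by a witness to the strictness of $\bfF_{X_{\xi+1}}\subsetneq\bfF_{X_\xi}$, and glue piecewise at limits using the dominating property to choose a sparse $X_\lambda$. Your upper-bound observation (inverse limit injects into $\prod_{\xi<\omega_1}\bfG_{X_\xi}$ by coinitiality, hence has size at most $2^{\aleph_1}$) is correct and worth stating, even though the paper leaves it implicit.

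There is, however, a genuine gap at the limit stage. Defining $\alpha_s$ on $I(X_\lambda,k)$ to be literally equal to $\alpha_{s\restriction\xi_{n(k)}}$ handles only the first half of the membership condition for $\bfF_{X_{\xi_n}}$, namely $\lim_j\Delta_{I(X_{\xi_n},j)}(\alpha_s,\alpha_{s\restriction\xi_n})=0$. The other half, $\lim_j\Delta_{\{n(X_{\xi_n},j),\,n(X_{\xi_n},j+1)\}}(\alpha_s,\alpha_{s\restriction\xi_n})=0$, is not controlled by the naive gluing: when the consecutive endpoints $n(X_{\xi_n},j)$ and $n(X_{\xi_n},j+1)$ straddle a boundary between $I(X_\lambda,k)$ and $I(X_\lambda,k+1)$, the quantity involves $\alpha_{s\restriction\xi_{n(k)}}$ at one point and $\alpha_{s\restriction\xi_{n(k+1)}}$ at the other, and coherence gives no bound on the resulting phase jump (coherence controls variations of $\alpha_m\bar\alpha_{m'}$ over $X$-intervals, not pointwise differences $|\alpha_m(i)-\alpha_{m'}(i)|$). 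Your appeal to Lemma~\ref{L.IJ} does not fix this; that lemma only supplies the equivalent two-condition description of $\bfF_X$, it does not bound the seam term. The missing device is the phase-matching: one must set $\alpha_s(i)=\gamma_k\,\alpha_{s\restriction\xi_{n(k)}}(i)$ on each block, where the constants $\gamma_k\in\TT$ are chosen inductively so that the two formulas for $\alpha_s$ agree at each boundary point $n(X_\lambda,k+1)$. With these constants, $\alpha_s\bar\alpha_s$ across a seam collapses to a single $\alpha_{s\restriction\xi_{n(k)}}\bar\alpha_{s\restriction\xi_{n(k)}}$ term and the estimate goes through; without them, the constructed $\alpha_s$ need not lie in $\alpha_{s\restriction\xi_n}\bfF_{X_{\xi_n}}$ at all.
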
 

As a consequence, CH implies that most of the elements of
$\ilim_{X\in\cU}\bfG_X$ are \emph{nontrivial}, in the sense that they
do not arise from a constant thread.

\begin{corollary}
  \label{cor:nontrivial}
  Suppose that CH holds, and that $\cU$ satisfies
  Hypothesis~\ref{hyp:U}.  Then the quotient
  $\ilim_{X\in\cU}\bfG_X/\im(\TT^\NN)$ is nontrivial, where
  $\im(\TT^\NN)$ denotes the image of $\TT^\NN$ under the diagonal map
  $\alpha\mapsto([\alpha]_{\bfF_X})_{X\in\cU}$.
\end{corollary}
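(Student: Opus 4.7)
The plan is to deduce the corollary directly from Theorem~\ref{thm:ilim} by a simple cardinality count under CH. First, I would note that the diagonal map $\alpha\mapsto([\alpha]_{\bfF_X})_{X\in\cU}$ is a group homomorphism from $\TT^\NN$ into $\ilim_{X\in\cU}\bfG_X$; this is clear because each coordinate is the quotient map $\TT^\NN\to \bfG_X$ and these quotient maps are compatible with the connecting maps in the inverse system, as exhibited by the commutative diagram~(\ref{I.short}). Consequently, $\im(\TT^\NN)$ is a subgroup of $\ilim_{X\in\cU}\bfG_X$ and the quotient in the statement is well-defined as an abelian group.

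With this in hand, the argument is a matter of elementary cardinal arithmetic. By Theorem~\ref{thm:ilim}, we have $|\ilim_{X\in\cU}\bfG_X|=2^{\aleph_1}$. On the other hand, $|\im(\TT^\NN)|\leq |\TT^\NN|=2^{\aleph_0}$, and under CH this equals $\aleph_1$. Cantor's theorem gives $\aleph_1<2^{\aleph_1}$, so the image cannot coincide with the whole inverse limit. In fact, since the inverse limit is the disjoint union of the cosets of $\im(\TT^\NN)$ and each coset has cardinality at most $\aleph_1$, the number of cosets must be $2^{\aleph_1}$. Thus the quotient has cardinality $2^{\aleph_1}$ and is, in particular, nontrivial.

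There is no substantive obstacle in this argument: once Theorem~\ref{thm:ilim} has been established (which is where the real work lies), the corollary follows mechanically. The CH hypothesis enters only through the identity $2^{\aleph_0}=\aleph_1$, which is used to control the size of $\im(\TT^\NN)$ relative to $\ilim_{X\in\cU}\bfG_X$. In particular, any axiom implying both $2^{\aleph_0}<2^{\aleph_1}$ and $\mathfrak d=\aleph_1$ would suffice, consistent with the remark in the introduction that the full strength of CH is not actually needed for Theorem~\ref{thm:main}.
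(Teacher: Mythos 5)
Your proof is correct and takes the same approach as the paper, which leaves the argument implicit and simply notes afterward that the weak Continuum Hypothesis $2^{\aleph_0}<2^{\aleph_1}$ suffices. Your cardinality count — $|\ilim_{X\in\cU}\bfG_X|=2^{\aleph_1}$ from Theorem~\ref{thm:ilim} versus $|\im(\TT^\NN)|\leq 2^{\aleph_0}<2^{\aleph_1}$ — is exactly the intended argument, and your closing remark that the quotient in fact has cardinality $2^{\aleph_1}$ is a correct strengthening.
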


We remark that to establish the Corollary, it is enough to replace the
assumption of CH with the so called \emph{weak Continuum
  Hypothesis}---the statement that $2^{\aleph_0}<2^{\aleph_1}$.

In later sections, we will show how to construct automorphisms of a
corona algebra from elements of the inverse limit
$\ilim_{X\in\cU}\bfG_X$.  There, we will use Theorem~\ref{thm:ilim} it
to show the analog of Corollary~\ref{cor:nontrivial} that most of the
automorphisms we construct will be \emph{nontrivial automorphisms}.

\begin{proof}[Proof of Theorem~\ref{thm:ilim}]
  Fix a strictly $\subset^*$-decreasing chain $X_\xi$, for
  $\xi<\omega_1$ that generates $\cU$.  We must construct
  $\alpha_s\in\TT^\NN$, for $s\in 2^\xi$ and $\xi<\omega_1$, by
  recursion so that (writing $\bfF_\xi$ for $\bfF_{X_\xi}$)
  \begin{itemize}
  \item $s\sqsubset t$ implies $\alpha_s\alpha_t^{-1}\in \bfF_\xi$,
    where $\xi=\dom(s)$, and
  \item $\alpha_{s^\frown 0}\alpha_{s^\frown 1}^{-1}\notin
    \bfF_{\xi+1}$, where $\xi=\dom(s)$.
  \end{itemize}

  For the successor stage, it suffices to show that
  $\bfF_{\xi+1}\subsetneq\bfF_\xi$.  For this, we initially thin out
  the sequence $X_\xi$ to suppose that for all $m$ there exists $n(m)$
  such that at least $m$ intervals of $X_\xi$ are contained in the
  interval $I(X_{\xi+1},n(m))$.  This can be done without any loss of
  generality by our third assumption in Hypothesis~\ref{hyp:U}
  (although it would also suffice to assume that $\cU$ is a
  nonprincipal ultrafilter).  Now, it is easy to construct an element
  $\alpha\in\TT^\NN$ which is constant on each $I(X_\xi,j)$, satisfies
  $\alpha(n(X_\xi,j+1))=e^{i\pi/m}\alpha(n(X_\xi,j))$ whenever
  $I(X_\xi,j)\subset I(X_{\xi+1},n(m))$, and satisfies
  $\alpha(n(X_\xi,j+1))=\alpha(n(X_\xi,j))$ otherwise.  Clearly
  $\alpha\in\bfF_\xi$, but for each $m$ we have
  $\Delta_{I(X_{\xi+1},n(m))}(\alpha,1)=2$ and so
  $\alpha\notin\bfF_{\xi+1}$.

  Next, we suppose that $s\in2^\xi$ and $s=\bigcup s_n$ where
  $\alpha_{s_n}$ have been defined for all $n$.  Since $s_n$ and $s$
  are the only sequences we are interested in, we may simplify the
  notation and write $\alpha_n$, $X_n$, and $\bfF_n$ for
  $\alpha_{s_n}$, $X_{\dom(s_n)}$, and $\bfF_{X_{\dom(s_n)}}$ respectively.
  We will choose $X_\infty$ to be a subset of $X_\xi$ which is so
  sparse that it satisfies
  \begin{eqtext}
    \label{e:xinfty}
    whenever $n\leq k$ and $I(X_n,j)\subset I(X_\infty,k)$, we have\\
    $\Delta_{I(X_n,j)}(\alpha_k,\alpha_n)<1/k$.
  \end{eqtext}
  For this, we inductively choose $I(X_\infty,k-1)$ large enough to
  include all of the remaining $I(X_n,j)$ such that $n\leq k$ and
  $\Delta_{I(X_n,j)}(\alpha_k,\alpha_n)\geq1/k$.  This can be done
  because for each fixed $n\leq k$, our first bulleted inductive
  hypothesis implies that
  $\Delta_{I(X_n,j)}(\alpha_k,\alpha_n)\rightarrow0$.  Using exactly
  the same reasoning, we can also suppose that
  \begin{eqtext}
    \label{e:xinfty2}
    whenever $n\leq k$ and $I(X_n,j)\subset I(X_\infty,k)$, we have\\
    $\Delta_{\set{n(X_n,j),n(X_n,j+1)}}(\alpha_k,\alpha_n)$ $<1/k$.
  \end{eqtext}

  Next, we must define $\alpha_s$ so that for each $n$, we have
  $\alpha_s\alpha_n^{-1}\in\bfF_n$.  In other words, we will need to
  satisfy both:
  \begin{equation}
    \label{e:limitdelta}
    \lim_{j\rightarrow\infty}\Delta_{I(X_n,j)}(\alpha_s,\alpha_n)=0\\
  \end{equation}
  \begin{equation}
    \label{e:limitrho}
    \lim_{j\to \infty}\Delta_{\set{n(X_n,j),n(X_n,j+1)}}(\alpha_s,\alpha_n)=0
  \end{equation}
  For \eqref{e:limitdelta} it would be sufficient to let
  $\alpha_s(i)=\alpha_n(i)$ whenever $i\in I(X_\infty,n)$.  However,
  to establish \eqref{e:limitrho} we shall need to be a little more
  careful.  Specifically, we define $\alpha_s(i)$ inductively such
  that for all $i\in I(X_\infty,n)\cup\{n(X_\infty,n+1)\}$ we have
  $\alpha_s(i)=\gamma_n\,\alpha_n(i)$, where $\gamma_n\in\TT$ is a
  uniquely determined constant.

  Now to verify \eqref{e:limitdelta}, let $k(j)$ be such that $n\leq
  k(j)$ and $I(X_n,j)\subset I(X_\infty,k(j))$.  Then using the
  definition of $\alpha_s$, together with the fact that constant
  multiples do not have an effect on the value of $\Delta_I$, we see
  that
  \begin{equation*}
    \Delta_{I(X_n,j)}(\alpha_s,\alpha_n)
    =\Delta_{I(X_n,j)}(\gamma_{k(j)}\alpha_{k(j)},\alpha_n)
    =\Delta_{I(X_n,j)}(\alpha_{k(j)},\alpha_n)\;.
  \end{equation*}
  By \eqref{e:xinfty}, the latter term is $<1/k(j)$.
  Since $k(j)\rightarrow\infty$ as $j\rightarrow\infty$, this
  establishes \eqref{e:limitdelta}.  But now \eqref{e:limitrho} is
  similar, because using the same reasoning we have
  \[\Delta_{\set{n(X_n,j),n(X_n,j+1)}}(\alpha_s,\alpha_n)
  =\Delta_{\set{n(X_n,j),n(X_n,j+1)}}(\alpha_{k(j)},\alpha_n)\;,
  \]
  and the latter term is $<1/k(j)$ by property \eqref{e:xinfty2}.
\end{proof}


\section{Connection with cohomology}
\label{S.Cohomology}

In this section we explore a connection between
Corollary~\ref{cor:nontrivial} and cohomology theory.  Note that a
similar connection exists in the work of the second author (see the
discussion in \cite[\S 2]{Fa:AQ}) as well as Talayco \cite{talayco}.
We assume the reader is familiar with the most basic categorical
notions, such as short exact sequences.

We begin by observing that the conclusion of the previous section,
that $\ilim\bfG_X/\im(\TT^\NN)$ can be nontrivial, stems from the fact
that surjective maps need not remain so after passing to the inverse
limit.  Indeed, notice that each of the quotient maps $\TT^\NN\to
\bfG_X$ is surjective, but Theorem~\ref{thm:ilim} implies that the
natural map $\TT^\NN\to\ilim\bfG_X$ is not surjective.

In the language of category theory, we say that $\ilim$ is a
left-exact covariant functor which is not right-exact.  This means
that whenever we apply it to a short exact sequence of objects
\[0\rTo\cA\rTo\cB\rTo\cC\rTo0
\]
we obtain an exact sequence
\[0\rTo\ilim\cA\rTo\ilim\cB\rTo\ilim\cC
\]
but we can not in general add the last $\rTo0$.  As we shall explain,
it follows that $\ilim$ gives rise to a sequence of derived functors
in the same way that $\Hom(A,\cdot)$ gives rise to the functors
$\Ext^n(A,\cdot)$.  The derived functors of $\ilim$ are denoted by
$\ilim^n$.  We refer the reader to \cite{Jensen} for a detailed
introduction to the derived functors $\ilim^n$.  What we will show is
that the conclusion of Corollary~\ref{cor:nontrivial} corresponds to the
statement that the first derived inverse limit $\ilim^1\bfF_X$ is
nontrivial.

Proceeding generally, let $D$ be a directed set and work in the
category of inverse systems of abelian groups which are indexed by
$D$.  In other words, an object is a sequence $\mathcal A=(A_d:d\in
D)$ together with a system of maps $\pi_{de}\colon A_e\to A_d$ for
$d<e$, satisfying the usual composition law: if $d<e<f$ then
$\pi_{de}\circ\pi_{ef}=\pi_{df}$.  Given such an $\mathcal A$, it is
possible to find an \emph{injective resolution}, that is, an exact
sequence of injective objects:
\begin{equation}
  \label{e:inj}
  0\rTo \mathcal A\rTo Q^0(\mathcal A)\rTo Q^1(\mathcal A)\rTo\cdots\;.
\end{equation}
Although we have no need for the details, we can find such a
resolution explicitly as follows.  First, for each $A_d$ let $M_d$ be
an injective abelian group containing $A_d$.  Then define
$Q^0(\mathcal A)$ to be the inverse system $(Q_d:d\in D)$ where
$Q_d=\prod_{d'\leq d}M_{d'}$, with respect to the natural restriction
maps.  It is not hard to check that $Q^0(\mathcal A)$ is an injective
object, and that $\mathcal A$ embeds into $Q^0(\mathcal A)$.  One then
continues the resolution inductively by letting $Q^n(\mathcal A)$ be
the $Q^0$ of the cokernel of the previous map.

We now we apply $\ilim$ to each term in the resolution in
Equation~\eqref{e:inj} to obtain a sequence:
\begin{equation}
  \label{e:cochain}
  0\rTo\ilim\mathcal A\rTo\ilim Q^0(\mathcal A)
  \rTo\ilim Q^1(\mathcal A)\rTo\cdots\;.
\end{equation}
Let us denote this sequence as a whole by $Q(\cA)$, and each map
$\ilim Q^n(\cA)\rightarrow\ilim Q^{n+1}(\cA)$ by $d^n$.  Then $Q(\cA)$
is not necessarily exact, but it still has the property that $\im
d^n\subset\ker d^{n+1}$.  Such a sequence is called a \emph{cochain},
and the maps $d^n$ are called \emph{coboundary} maps.  The
\emph{cohomology} groups $H^n(Q(\cA))=\ker d^{n+1}/\im d^n$ measure
the inexactness of the cochain (here, $H^0(Q(\cA))=\ker d^0$).

\begin{definition}
  \label{def:derived}
  For each $n$, the derived functor $\ilim^n$ is defined by
  \[\ilim^n\mathcal A=H^n(Q(\mathcal A))\;.
  \]
\end{definition}

It is a standard fact that this definition is independent of the
choice of injective resolution $Q^n(\mathcal A)$.

Since $\ilim$ is left-exact, the cochain in Equation~\eqref{e:cochain}
is exact at the term $\ilim Q^0(\mathcal A)$.  It follows that
$\ilim^0\mathcal A$ is precisely $\ilim\cA$; in other words,
the functor $\ilim^0$ is precisely $\ilim$.  In many cases the higher
$\ilim^n$ groups are trivial, and a number of conditions have been
established to guarantee that $\ilim^1\cA$ vanishes.  In this paper we
only have need of the following.

\begin{definition}
  An inverse system $\mathcal A$ is said to be \emph{flasque} if for
  every downwards-closed $J\subset D$, every partial thread
  $(a_d)_{d\in J}$ can be extended to a thread $(a_d)_{d\in D}$.
\end{definition}

It is a basic result that if $\mathcal A$ is flasque, then
$\ilim^n\mathcal A=0$ for all $n\geq1$ (see
\cite[Th\'eor\`eme~1.8]{Jensen}).  Flasque systems are not uncommon;
for instance if $\mathcal A=(A_i)_{i\in\NN}$, and the maps $\pi_{ij}$
are all surjective, then $\mathcal A$ is flasque.

We now turn to a computation of the higher $\ilim^n$ groups for the
inverse systems from the previous section.  Let $\mathcal F$ denote
the inverse system consisting of $(\bfF_X)_{X\in \cU}$ with respect to
the inclusion maps, and similarly let $\mathcal G$ denote
$(\bfG_X)_{X\in \cU}$ with respect to the quotient maps.  Notice that
$\mathcal F$ is rather degenerate as an inverse system; its
``projection'' maps are injective!  Nevertheless, we now show that
it's first derived inverse limit $\ilim^1\mathcal F$ is nontrivial,
and in fact it coincides precisely with the object of study of
Corollary~\ref{cor:nontrivial}.

\begin{proposition}
  \label{prop:lim1}
  The group $\ilim^1\mathcal F$ is precisely $\ilim\mathcal
  G/\im(\TT^\NN)$, where $\TT^\NN\to\ilim\mathcal G$ is the diagonal
  map.
\end{proposition}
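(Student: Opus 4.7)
The approach is to realize the claimed isomorphism as the tail end of the long exact sequence of derived inverse limits applied to the short exact sequence of inverse systems
\[
0 \to \mathcal F \to \underline{\TT^\NN} \to \mathcal G \to 0\;,
\]
where $\underline{\TT^\NN}$ denotes the constant inverse system indexed by $\cU$ with value $\TT^\NN$ and identity transition maps. Exactness of this sequence of inverse systems is just the row-wise content of diagram~\eqref{I.short}, packaged naturally in the index $X$.

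Next I would apply the long exact sequence of right derived functors of the left-exact functor $\ilim$ to obtain
\[
0 \to \ilim\mathcal F \to \TT^\NN \to \ilim\mathcal G \to \ilim^1\mathcal F \to \ilim^1\underline{\TT^\NN} \to \cdots\;,
\]
using that $\ilim\underline{\TT^\NN} = \TT^\NN$ and observing that the induced map $\TT^\NN \to \ilim\mathcal G$ is precisely the diagonal map $\alpha \mapsto ([\alpha]_{\bfF_X})_{X\in\cU}$ appearing in the statement. Granted the vanishing $\ilim^1\underline{\TT^\NN} = 0$, exactness at $\ilim^1\mathcal F$ then yields the desired identification $\ilim^1\mathcal F \cong \ilim\mathcal G / \im(\TT^\NN)$.

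The only remaining point is to verify that $\ilim^1\underline{\TT^\NN}$ vanishes, and this is where one could anticipate an obstacle; in fact it is immediate from the Jensen vanishing theorem quoted just before the proposition, once one notes that the constant system is flasque. Flasqueness in this case is essentially tautological: since all transition maps are the identity, both a global thread and a partial thread over a downwards-closed $J\subset\cU$ amount to a single element $\alpha\in\TT^\NN$, and any such partial thread extends to the global thread taking value $\alpha$ at every index. No genuine difficulties are expected; the argument is the standard derived-functor computation for a short exact sequence whose middle term is flasque.
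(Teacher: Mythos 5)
Your argument is exactly the paper's: both take the short exact sequence of inverse systems $0\to\mathcal F\to\mathcal T\to\mathcal G\to 0$ coming from diagram~\eqref{I.short}, pass to the long exact sequence of derived functors $\ilim^n$, identify $\ilim\mathcal T=\TT^\NN$, and kill $\ilim^1\mathcal T$ by flasqueness of the constant system via the Jensen vanishing theorem. No meaningful difference in route or emphasis.
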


\begin{proof}
  Rather than proceed via a direct computation (which would involve
  the messy injective resolution), we will use the fact that
  $\ilim^1\mathcal F$ appears in a long exact sequence.  Let $\mathcal
  T$ denote the inverse system consisting of $(\TT^\NN)_{X\in\mathcal
    U}$ together with the identity maps.  Then we see from
  Equation~\eqref{I.short} that in the category of inverse systems, we
  have an exact sequence:
  \[0\rTo\mathcal F\rTo\mathcal T\rTo\mathcal G\rTo 0\;.
  \]
  Again letting $Q(\mathcal A)$ denote the cochain constructed in
  Equation~\eqref{e:cochain}, it follows that there is an exact sequence
  of cochains
  \[0\rTo Q(\mathcal F)\rTo Q(\mathcal T)\rTo Q(\mathcal G)\rTo 0\;.
  \]
  By the zig-zag lemma, we get a long exact sequence consisting of the
  cohomology groups of $Q(\mathcal F)$, $Q(\mathcal T)$, and
  $Q(\mathcal G)$.  By Definition~\ref{def:derived} and the following
  remark, this means that we have an exact sequence:
  \begin{align*}
    0\rTo\ilim\mathcal F\rTo\ilim\mathcal T\rTo&\ilim\mathcal G
    \rTo\ilim^1\mathcal F\rTo \ilim^1\mathcal T\rTo\cdots\;.
    \intertext{Now, it is clear that that $\ilim T$ is just $\TT^\NN$.
      Second, it is easy to see that $\mathcal T$ is flasque, and
      hence that $\ilim^1(\mathcal T)=0$.  Hence, we have an exact
      sequence:}
    \quad\quad\;\TT^\NN\rTo&\ilim\mathcal G\rTo\ilim^1\mathcal F\rTo 0\;.
  \end{align*}
  But this precisely means that $\ilim^1\mathcal F$ is isomorphic to
  $\ilim\mathcal G/\im(\TT^\NN)$, as desired.
\end{proof}

For completeness, it is worth showing that the object identified in
Proposition~\ref{prop:lim1} is the last interesting one in the
picture.

\begin{proposition}
  The derived inverse limits $\ilim^n\mathcal F$ vanish for all $n>1$.
\end{proposition}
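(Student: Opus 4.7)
The plan is to reduce the vanishing to a property of $\mathcal G$ by exploiting the same exact sequence used in the proof of Proposition~\ref{prop:lim1}. Applying the long exact sequence of derived limits to $0 \to \mathcal F \to \mathcal T \to \mathcal G \to 0$, together with $\ilim^n \mathcal T = 0$ for $n \geq 1$ (which holds because the constant system $\mathcal T$ is flasque), should yield natural isomorphisms
\[
\ilim^n \mathcal G \;\cong\; \ilim^{n+1} \mathcal F \qquad \text{for every } n \geq 1,
\]
so the task reduces to showing $\ilim^n \mathcal G = 0$ for every $n \geq 1$. The cleanest way to obtain this is to prove that $\mathcal G$ is flasque and invoke the standard fact recorded after the definition of flasque.

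To verify flasqueness I would first pass to the cofinal $\omega_1$-chain $(X_\xi)_{\xi < \omega_1}$ furnished by Hypothesis~\ref{hyp:U}, since this does not affect any $\ilim^n$; downward-closed subsets of this chain are just initial segments. A partial thread $(g_\xi)_{\xi < \lambda}$ would then be extended by transfinite recursion, with the successor stages immediate from the surjectivity of the quotient maps $\bfG_{X_{\mu+1}} \to \bfG_{X_\mu}$. The substantive countable limit stage $\lambda$ should be handled by the limit construction already carried out in the proof of Theorem~\ref{thm:ilim}: I would lift each $g_{\xi_n}$ along a cofinal sequence $\xi_n \uparrow \lambda$ to some $\alpha_n \in \TT^\NN$ and observe that compatibility of the thread in $\mathcal G$ supplies exactly the hypothesis $\alpha_n\alpha_{n+1}^{-1} \in \bfF_{X_{\xi_n}}$ needed to invoke that construction. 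Its output is an $\alpha \in \TT^\NN$ with $\alpha \alpha_n^{-1} \in \bfF_{X_{\xi_n}}$ for every $n$, and the class $[\alpha]_{\bfF_{X_\lambda}}$ is the desired extension; its image in $\bfG_{X_\eta}$ for $\eta < \lambda$ agrees with $g_\eta$ thanks to the inclusions $\bfF_{X_{\xi_n}} \subset \bfF_{X_\eta}$ available whenever $\xi_n \geq \eta$.

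The real difficulty for the present statement lives in that countable limit step, but because it is already encapsulated in the proof of Theorem~\ref{thm:ilim}, the work here reduces to bookkeeping: recording the reduction via the long exact sequence and checking that compatibility in $\mathcal G$ is precisely what is needed to apply that earlier construction.
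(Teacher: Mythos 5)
Your argument is correct and essentially identical to the paper's: both reduce the statement to the flasqueness of $\mathcal G$ restricted to the cofinal $\omega_1$-chain $J = \{X_\xi : \xi < \omega_1\}$ (using $\ilim^n_J = \ilim^n$ for cofinal $J$), verify that flasqueness via the limit-stage construction in the proof of Theorem~\ref{thm:ilim}, and then read off the vanishing from the long exact sequence for $0\to\mathcal F\to\mathcal T\to\mathcal G\to 0$. The paper phrases the last step as ``two out of every three terms vanish'' rather than isolating the isomorphism $\ilim^n\mathcal G\cong\ilim^{n+1}\mathcal F$, but the content is the same.
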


\begin{proof}
  We shall use the fact \cite[\S 2]{Jensen} that if $J$ is cofinal in
  $\cU$, then $\ilim^n\mathcal F=\ilim_J^n\mathcal F$ for all $n$.
  Let $J=\set{X_\xi:\xi<\omega_1}$ denote the subset of $\cU$
  consisting just of the elements of the generating tower.  Then it is
  again clear that $\mathcal T\rs J$ is flasque, but moreover
  $\mathcal G\rs J$ is flasque as well.  Indeed, if $X_\xi\in J$ then
  any partial thread $(g_{X_\alpha})_{\alpha<\xi}$ can be extended to
  a thread on all of $J$ using the construction at the limit stage in
  the proof of Theorem~\ref{thm:ilim}.  Therefore in the exact
  sequence
  \begin{equation*}
    0\rTo\ilim_J\mathcal F\rTo\ilim_J\mathcal T\rTo\ilim_J\mathcal G
    \rTo\ilim_J^1\mathcal F\rTo \ilim_J^1\mathcal T\rTo\cdots
  \end{equation*}
  we have that two out of every three terms beginning with
  $\ilim_J^1\mathcal T$ is equal to $0$.  It follows that the
  sequence vanishes at $\ilim_J^1\mathcal T$, and in particular
  $\ilim^n\mathcal F=\ilim_J^n\mathcal F=0$ for $n>1$.
\end{proof}

\section{The main result}
\label{S.Main} 

In this section we show how elements of the group $\ilim\bfG_X$
constructed in Section~\ref{S.Build} give rise to automorphisms of the
corona algebra of a C*-algebra $A$.  For this, we shall need to impose
the following hypothesis on $A$.

\begin{hypothesis}
  \label{hyp:A}
  We assume $A$ has a sequence of orthogonal projections $r_i$, for
  $i\in \bbN$, such that:
  \begin{itemize}
  \item $r_i A r_j\neq 0$ for all $i$ and $j$; and 
  \item the sequence of partial sums $p_n=\sum_{i<n}r_i$ form an
    \emph{approximate unit} for $A$.  That is, for any $a\in A$ we
    have $p_na\rightarrow a$.
  \end{itemize}
\end{hypothesis}

The following is our first generalization of the result of Phillips
and Weaver.  In the next section, we shall strengthen it by slightly
weakening Hypothesis~\ref{hyp:A}.

\begin{theorem}
  \label{thm:corona}
  Suppose that $\cU\subset\mathcal P(\NN)$ satisfies
  Hypothesis~\ref{hyp:U} and that $A$ satisfies
  Hypothesis~\ref{hyp:A}.  Then there is an embedding of
  $\ilim_{X\in\cU}\bfG_X$ into the automorphism group of $\cq(A)$.
\end{theorem}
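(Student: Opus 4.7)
The plan is to associate to each $\alpha\in\TT^\NN$ the diagonal unitary $u_\alpha:=\sum_i \alpha(i) r_i\in M(A)$ (well-defined by strict convergence thanks to Hypothesis~\ref{hyp:A}), giving an inner automorphism $\sigma_\alpha:=\Ad(u_\alpha)$ of $M(A)$ which descends to $\cq(A)$. I will stratify $\cq(A)$ by a directed family of C*-subalgebras $(C_X)_{X\in\cU}$ and identify $\bfG_X$ with a group of automorphisms of $C_X$, then assemble a thread in $\ilim\bfG_X$ into a single automorphism of $\cq(A)$.

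\medskip

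For each $X\in\cU$ with block projections $P_j^X:=\sum_{i\in I(X,j)}r_i$, define $C_X\subseteq\cq(A)$ to consist of classes $[a]$ with a representative $a\in M(A)$ that is approximately block-diagonal with respect to the $X$-partition, capturing both within-block and adjacent-block entries (precisely mirroring the combined data $\Delta_{I(X,j)}$ and $\Delta_{\{n(X,j),n(X,j+1)\}}$ used to define $\bfF_X$ in Definition~\ref{D.1.2}). Because $u_\alpha$ is diagonal in $\{r_i\}$ it commutes with every $P_j^X$, so $\sigma_\alpha$ preserves $C_X$. If $X\subset^* Y$, then $Y$'s partition refines $X$'s mod finite and $C_Y\subseteq C_X$, giving $(C_X)_X$ the opposite directedness to the inverse system $(\bfG_X)_X$. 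The central ingredient is then a key lemma stating that $\sigma_\alpha|_{C_X}=\sigma_\beta|_{C_X}$ if and only if $\alpha\beta^{-1}\in\bfF_X$. The ``if'' direction follows from a block-wise norm estimate bounding the contribution at block $j$ by $\Delta_{I(X,j)\cup I(X,j+1)}(\alpha,\beta)\,\|a_j\|$, which tends to zero by the very definition of $\bfF_X$ (combined with Lemma~\ref{L.IJ}). The ``only if'' direction is where Hypothesis~\ref{hyp:A}'s nondegeneracy comes in: if $\alpha\beta^{-1}\notin\bfF_X$, choose $\epsilon>0$ and infinitely many index pairs $(i_j,k_j)$ in $I(X,j)\cup I(X,j+1)$ witnessing $|\alpha(i_j)\overline{\alpha(k_j)}-\beta(i_j)\overline{\beta(k_j)}|\geq\epsilon$, select norm-one $v_j\in r_{i_j}Ar_{k_j}$, and spread them across paired blocks so that $v=\sum_j v_j$ represents an element of $C_X$; then $\sigma_\alpha(v)-\sigma_\beta(v)$ has norm $\geq\epsilon$ on infinitely many orthogonal corners and so is not in $A$.

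\medskip

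It follows that $\alpha\mapsto\sigma_\alpha|_{C_X}$ factors through $\bfG_X$ to give an injection $\bfG_X\hookrightarrow\Aut(C_X)$, functorial in $X$. Given a thread $(\alpha_X)_{X\in\cU}\in\ilim\bfG_X$ with representatives $\alpha_X^*\in\TT^\NN$, define $\Phi$ on $\cq(A)$ by $\Phi|_{C_X}:=\sigma_{\alpha_X^*}|_{C_X}$; the thread compatibility together with the key lemma, applied via the common refinement $X\cap X'\in\cU$, guarantees agreement on overlaps. Verifying $\cq(A)=\bigcup_{X\in\cU}C_X$ uses the dominating-family clause of Hypothesis~\ref{hyp:U}: for any $a\in M(A)$, one can choose $X\in\cU$ whose enumeration dominates an appropriate ``off-diagonal rate'' of $a$, making the off-block residue of $a$ lie in $A$. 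Injectivity of the resulting embedding $\ilim\bfG_X\hookrightarrow\Aut(\cq(A))$ is then immediate from injectivity at each layer: a thread sent to the identity satisfies $\sigma_{\alpha_X^*}|_{C_X}=\id$ for every $X$, forcing $\alpha_X^*\in\bfF_X$ and so trivializing the thread. The two main obstacles are the covering statement --- which requires a careful truncation argument controlled by the dominating hypothesis --- and the negative half of the key lemma, where Hypothesis~\ref{hyp:A}'s condition $r_iAr_j\neq 0$ is exactly what lets us build the witnesses $v_j$.
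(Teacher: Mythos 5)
Your proposal is correct and follows essentially the same route as the paper: the same block-tridiagonal filtration $DD_X(A)$ / $C_X(A)$ (Lemma~\ref{lem:stratify}), the same norm comparison $\Delta_I(\alpha,\beta)\le \|\Ad u_\alpha - \Ad u_\beta\|\le 2\Delta_I(\alpha,\beta)$ (Lemma~\ref{lem:key}), the same kernel identification $\ker(\alpha\mapsto\Ad\dot u_\alpha|_{C_X})=\bfF_X$ using $r_iAr_j\neq 0$ to build a block-diagonal witness (Lemma~\ref{lem:welldefined}), and the same patching of a thread into a single automorphism via the dominating family and closure of $\cU$ under finite intersections (Lemma~\ref{lem:patch}). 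One small remark: the inclusions run $C_X\subset C_Y$ exactly when $Y\subset^* X$, which is the \emph{same} direction as the quotient maps $\bfG_Y\to\bfG_X$, so the parenthetical claim that $(C_X)_X$ has the ``opposite directedness'' to $(\bfG_X)_X$ is a sign error in the bookkeeping, but it does not affect the substance of the argument.
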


The proof will consist of a series of lemmas.  We begin by showing how
to stratify the corona $\cq(A)$ into a family of systems of operators
$C_X(A)$ indexed by elements $X\in\mathcal P(\NN)$.  Each system
$C_X(A)$ will consist of operators which are very near to being block
diagonal, in the sense of the $r_i$.  (Of course, we cannot hope to
stratify the corona using multipliers which are actually block
diagonal in this sense.)



For $I\subset\bbN$ let us define
\[
p_I=\sum_{i\in I} r_i\;.
\]
We then let
\[DD_X(A)=\set{m\in M(A):p_{I(X,i)}\,m\,p_{I(X,j)}=0\textrm{ whenever
  }\abs{i-j}\geq2}\;.
\]
We shall see during the proof of Lemma~\ref{lem:stratify} that every
element $m\in DD_X(A)$ can be written as a sum of two multipliers
which are block-diagonal in the sense of the $r_i$.

Now, we let $C_X(A)\subset \cq(A)$ be the fragment which comes from the
image of $DD_X(A)$ inside the corona:
\[
C_X(A)=DD_X(A)/(A\cap DD_X(A))\;.
\]
The following lemma shows that these fragments do indeed stratify
$\cq(A)$.  Special cases of it appeared during the initial segment of
the proof of \cite[Theorem 3.1]{Ell:Derivations} (where it was assumed
that $A$ is a UHF algebra) and \cite[Lemma 1.2]{Fa:All} (where
$A=\cK$).

\begin{lemma}
  \label{lem:stratify}
  For every $m\in M(A)$ there is a subset $X\subset\bbN$ such that $m$
  is represented in $C_X(A)$.  More precisely, every $m\in M(A)$ can
  be written as $m=d+a$ with $d\in DD_X(A)$ and $a\in A$.
\end{lemma}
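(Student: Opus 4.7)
The plan is to choose $X = \{n_1 < n_2 < \cdots\}$ sparsely enough that the off-band part of $m$ with respect to the induced partition $(I(X,j))_j$ becomes norm-summable and hence lands in $A$. The essential observation is that $p_n$ lies in $A$ for every finite $n$, being a finite sum of the $r_i \in A$; consequently $m p_{n_k}$ and $p_{n_k} m$ lie in $A$ for each $k$, and since $(p_n)$ is an approximate unit for $A$ (and $p_n = p_n^*$ gives the two-sided property), we get $\norm{(1 - p_n) m p_{n_k}} \to 0$ and $\norm{p_{n_k} m (1 - p_n)} \to 0$ as $n \to \infty$. Starting from $n_0 = 0$, I would recursively pick $n_{k+1}$ large enough that both $\norm{(1 - p_{n_{k+1}}) m p_{n_k}}$ and $\norm{p_{n_k} m (1 - p_{n_{k+1}})}$ are less than $2^{-k}$.

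Setting $X = \{n_k : k \geq 1\}$ and $q_j := p_{I(X,j)} = p_{n_{j+1}} - p_{n_j}$, I would next define the two ``off-band'' pieces
\[
u = \sum_{j \geq 0} (1 - p_{n_{j+2}}) m q_j, \qquad v = \sum_{j \geq 0} q_j m (1 - p_{n_{j+2}}).
\]
The triangle inequality applied to $q_j = p_{n_{j+1}} - p_{n_j}$, together with the sub-projection inequality $\norm{(1 - p_{n_{j+2}}) x} \leq \norm{(1 - p_{n_{j+1}}) x}$ (valid since $1 - p_{n_{j+2}} \leq 1 - p_{n_{j+1}}$), bounds the $j$th summand of $u$ by $2^{-(j+1)} + 2^{-j}$, and similarly for $v$. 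Hence both series converge absolutely in norm. Moreover each summand already lies in $A$, since $m q_j \in A$ (using $q_j \in A$ and $m \in M(A)$). Therefore $u, v \in A$; I set $a := u + v \in A$ and $d := m - a \in M(A)$.

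To verify $d \in DD_X(A)$ I need $q_i d q_j = 0$ whenever $\abs{i - j} \geq 2$. Using $q_i q_k = \delta_{ik} q_k$, I would expand $q_i u q_j = q_i (1 - p_{n_{j+2}}) m q_j$, which equals $q_i m q_j$ when $i \geq j + 2$ (since then $q_i \leq 1 - p_{n_{j+2}}$) and vanishes when $i \leq j + 1$ (since then $q_i \leq p_{n_{j+2}}$); the computation for $v$ is symmetric. When $\abs{i-j} \geq 2$, exactly one of the cases $i \geq j+2$ or $j \geq i+2$ applies, so exactly one of $q_i u q_j,\ q_i v q_j$ recovers $q_i m q_j$ and the other vanishes, giving $q_i d q_j = q_i m q_j - q_i m q_j = 0$. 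For the parenthetical remark preceding the lemma, one further observes that any element of $DD_X(A)$ decomposes as $d_1 + d_2$ with $d_1 := \sum_i (q_{2i} + q_{2i+1}) d (q_{2i} + q_{2i+1})$ block-diagonal along the coarser partition $(I(X,2i) \cup I(X,2i+1))_i$ and $d_2 := d - d_1$ block-diagonal along the shifted coarser partition.

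The only real obstacle is making sure that $u$ and $v$ lie in $A$ rather than merely in $M(A)$; this is precisely what the sparse choice of $X$ in the first step was designed to achieve, via absolute norm-convergence combined with the observation that each individual summand is already in $A$. Everything else is a direct bookkeeping computation with the orthogonality of the $q_j$.
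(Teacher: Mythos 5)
Your proof is correct and follows essentially the same approach as the paper's: pick $X$ sparsely so that the successive cuts $\norm{(1-p_{n_{j+1}})mp_{n_j}}$ and $\norm{p_{n_j}m(1-p_{n_{j+1}})}$ decay geometrically, decompose $m$ into a tridiagonal part $d\in DD_X(A)$ plus a norm-summable off-band remainder $a$, and use $mq_j,q_jm\in A$ to conclude $a\in A$. The only cosmetic difference is that the paper defines the near-band part $d=m_e+m_o$ directly and checks that $m-d\in A$, whereas you build the off-band remainder $a=u+v$ directly and set $d=m-a$.
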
 

\begin{proof}
  Let $n(0)=0$ and inductively define an increasing sequence $n(j)$ so
  that the following holds for $j\geq1$:
  \begin{eqtext}
    $(1-p_{n(j+1)})\,m\,p_{n(j)} $ and $(1-p_{n(j+1)})\,m^*\,p_{n(j)}$
    have norm $\leq 2^{-j}$.
  \end{eqtext}
  This is possible since $m\,p_{n(j)}\in A$ and $p_n$ form an
  approximate unit for $A$.  We claim that if $X=\set{n(j):j\in\NN}$
  then $X$ is as desired.

  For this, we let $m_e$ and $m_o$ be defined as follows:
  \begin{align*}
    m_e &= \sum_i p_{I(X,2i)\cup I(X,2i+1)}\,m\,p_{I(X,2i)\cup I(X,2i+1)}\;,\\
    m_o &= \sum_i p_{I(2i+1)}\,m\,p_{I(2i+2)}+p_{I(2i+2)}\,m\,p_{I(2i+1)}\;.
  \end{align*}
  See Figure~\ref{fig:dd} for a clearer picture of how $m_e$ and $m_o$
  are selected.

  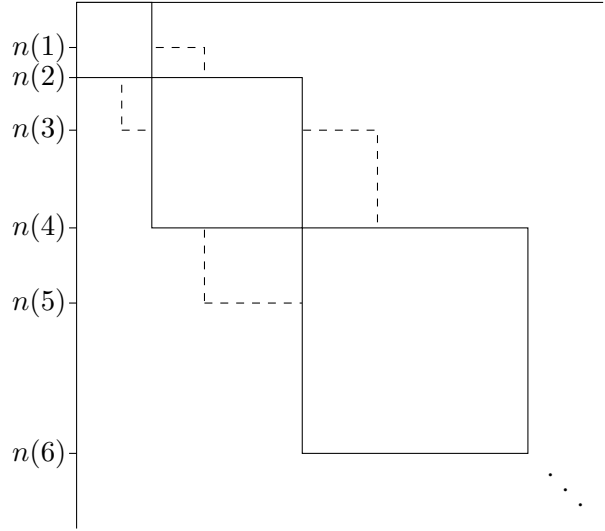
\begin{figure}[h]
  \begin{tikzpicture}
    \draw (0,0) -- (7,0);
    \draw (0,0) -- (0,-7);
    \draw[dashed] (.6,-.6) rectangle (1.7,-1.7);
    \draw[dashed] (1.7,-1.7) rectangle (4,-4);
    \draw[fill=white,draw=none] (0,0) rectangle (1,-1);
    \draw (0,0) rectangle (1,-1);
    \draw[fill=white,draw=none] (1,-1) rectangle (3,-3);
    \draw (1,-1) rectangle (3,-3);
    \draw[fill=white,draw=none] (3,-3) rectangle (6,-6);
    \draw (3,-3) rectangle (6,-6);
    \foreach \x / \y in {1/.6,2/1,3/1.7,4/3,5/4,6/6}
    {
    \node[anchor=east] at (0,-\y) {$n(\x)$};
    \draw (-.1,-\y) -- (0,-\y);
    }
    \foreach \z in {.3,.5,.7}
    \node at (6+\z,-6-\z) {$\cdot$};
  \end{tikzpicture}
  \caption{The solid square regions represent fragments of $m$
    captured in $m_e$; the dashed rectangular additions represent
    fragments of $m$ captured in $m_o$.  The details of the
    construction imply that the uncaptured region is summable in norm
    and therefore represents an element of $A$.\label{fig:dd}}
  \end{figure}

  Now, $d=m_e+m_o$ makes up a very large portion of $m$; indeed, a
  straightforward computation shows that $a=m-m_e-m_o$ satisfies
  \[\norm{(1-p_{n(i)})\,a}\leq 2^{-i+4}
  \]
  for all $i$ and therefore $a\in A$, as required.

  Finally, we need only verify that $d=m_e+m_o$ lies in $DD_X(A)$.
  Indeed, considering each term in the definitions of both $m_e$ and
  $m_o$, if $\abs{i-j}\geq2$ then the term is annihilated either by
  left-multiplication by $p_{I(X,i)}$ or right-multiplication by
  $p_{I(X,j)}$.
\end{proof}

We note that if $\cU\subset\cP(\bbN)$ is such that the enumerating
functions of elements of $\cU$ form an eventually dominating family,
then in Lemma~\ref{lem:stratify} we may choose $X$ to be an element of
$\cU$.

We now show how to define a continuous homomorphism from $\TT^\NN$
into the group of inner automorphisms of the special fragments
$C_X(A)$ of the corona defined above.  For this we will need the
following key result which connects the construction in
Section~\ref{S.Build} with our present efforts.

\begin{lemma}
  \label{lem:key}
  Suppose that $A_0$ is a C*-algebra which contains a sequence of
  orthogonal projections $q_0,\ldots,q_{n-1}$ satisfying
  \begin{itemize}
  \item $\sum_{i<n} q_i=1$, and 
  \item $q_i\,A_0\,q_j\neq \{0\}$ for all $i,j<n$. 
  \end{itemize}
  Letting $I=\set{0,\ldots,n-1}$, for each $\alpha\in\TT^I$ we let
  $u_\alpha=\sum_{i<n}\alpha(i) q_i$.  Then for all $\alpha,\beta$ we
  have
  \[\Delta_I(\alpha,\beta)\leq \norm{\Ad u_\alpha - \Ad
    u_\beta}\leq2\Delta_I(\alpha,\beta)\;.
  \]
\end{lemma}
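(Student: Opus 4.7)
The plan is to exploit the block-diagonal structure of the $u_\alpha$ with respect to the $q_i$, which makes $\Ad u_\alpha$ act as a diagonal scalar on each ``block'' $q_i A_0 q_j$. Specifically, using $\sum_{i<n} q_i = 1$, one expands
\[
\Ad u_\alpha(x) \;=\; u_\alpha x u_\alpha^* \;=\; \sum_{i,j<n} \alpha(i)\overline{\alpha(j)}\, q_i x q_j,
\]
so that
\[
(\Ad u_\alpha - \Ad u_\beta)(x) \;=\; \sum_{i,j<n}\bigl(\alpha(i)\overline{\alpha(j)} - \beta(i)\overline{\beta(j)}\bigr)\, q_i x q_j.
\]
Everything flows from this identity.

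For the lower bound, I would fix a pair $i,j\in I$ achieving the maximum in $\Delta_I(\alpha,\beta)$ (necessarily with $i\neq j$, since the diagonal entries are zero), and invoke the hypothesis $q_i A_0 q_j \neq \{0\}$ to select a norm-one witness $x\in q_i A_0 q_j$. On such an element the double sum above collapses to a single scalar multiple, giving $(\Ad u_\alpha - \Ad u_\beta)(x) = (\alpha(i)\overline{\alpha(j)} - \beta(i)\overline{\beta(j)})\,x$, and taking the operator norm yields $\norm{\Ad u_\alpha - \Ad u_\beta} \geq \Delta_{\{i,j\}}(\alpha,\beta) = \Delta_I(\alpha,\beta)$.

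For the upper bound, the natural first step is the standard commutator decomposition
\[
u_\alpha x u_\alpha^* - u_\beta x u_\beta^* \;=\; (u_\alpha - u_\beta)\, x\, u_\alpha^* + u_\beta\, x\, (u_\alpha - u_\beta)^*,
\]
which gives $\norm{\Ad u_\alpha - \Ad u_\beta} \leq 2\norm{u_\alpha - u_\beta}$ since each $u_\gamma$ is a unitary (being a sum $\sum_i \gamma(i) q_i$ of $\TT$-multiples of orthogonal projections summing to $1$). The obstacle here is that naively $\norm{u_\alpha - u_\beta} = \max_i |\alpha(i) - \beta(i)|$, which can be strictly larger than $\Delta_I(\alpha,\beta)$ (for instance, $\alpha = 1$, $\beta = -1$ give $\norm{u_\alpha-u_\beta}=2$ but $\Delta_I=0$). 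I would resolve this by exploiting the gauge invariance $\Ad u_{\lambda\gamma} = \Ad u_\gamma$ for $\lambda\in\TT$: replace $\alpha$ and $\beta$ by $\overline{\alpha(i_0)}\alpha$ and $\overline{\beta(i_0)}\beta$ for some fixed index $i_0$, so that both agree with $1$ at $i_0$. This substitution changes neither the left-hand side $\norm{\Ad u_\alpha - \Ad u_\beta}$ nor the right-hand side $\Delta_I(\alpha,\beta)$, but now
\[
|\alpha(i) - \beta(i)| \;=\; |\alpha(i)\overline{\alpha(i_0)} - \beta(i)\overline{\beta(i_0)}| \;=\; \Delta_{\{i,i_0\}}(\alpha,\beta) \;\leq\; \Delta_I(\alpha,\beta),
\]
giving $\norm{u_\alpha - u_\beta} \leq \Delta_I(\alpha,\beta)$ and hence the desired upper bound.

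The only genuine subtlety is the gauge normalization in the upper bound; once that is in place, both inequalities reduce to direct manipulation of the expansion of $\Ad u_\alpha$ on the blocks $q_i A_0 q_j$.
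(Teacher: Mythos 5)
Your proof is correct and follows essentially the same route as the paper: for the upper bound, a commutator decomposition combined with a gauge normalization of $\alpha$ (the paper first reduces to $\beta=1$ and then normalizes $\alpha(0)=1$, which is the same move applied in a slightly different order); for the lower bound, testing $\Ad u_\alpha - \Ad u_\beta$ on a witness in $q_i A_0 q_j$. The only cosmetic difference is that the paper eliminates $\beta$ at the outset via $\Delta_I(\alpha,\beta)=\Delta_I(\alpha\bar\beta,1)$ and $\norm{\Ad u_\alpha - \Ad u_\beta}=\norm{\Ad(u_\alpha u_\beta^*)-\id}$, while you carry both $\alpha$ and $\beta$ and normalize them jointly; the substance is identical.
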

  
\begin{proof} 
  Since $\Delta_I(\alpha,\beta)=\Delta_I(\alpha\bar \beta,1)$ and
  $\norm{\Ad u_\alpha - \Ad u_\beta}=\norm{\Ad (u_\alpha
    u_\beta^*)-\id}$ we may assume $\beta=1$ (i.e., the constantly 1
  function in $\TT^n$).  Now, to show that $\norm{\Ad
    u_\alpha-\id}\leq2\Delta_I(\alpha,1)$, first note that we can
  multiply $\alpha$ by a fixed constant to assume that $\alpha(0)=1$.
  This implies that for all $i<n$,
  $\abs{\alpha(i)-1}\leq\Delta(\alpha,1)$, and so in particular
  \[u_\alpha-1=\sum_{i<n}(\alpha(i)-1)q_i
  \]
  has norm $\leq\Delta(\alpha,1)$.  It follows that
  \begin{align*}
    \norm{u_\alpha\,a\,u_\alpha^*-a}&\leq
    \norm{u_\alpha\,a\,u_\alpha^*-u_\alpha\,a}+\norm{u_\alpha\,a - a}\\
    &\leq2\Delta(\alpha,1)\norm{a}
  \end{align*}
  
  For the inequality $\Delta_I(\alpha,1)\leq\left\|\Ad
    u_\alpha-\id\right\|$, fix $i$ and $j$ in $I$.  For $a\in
  q_i\,A_0\,q_j$ we have
  \[u_\alpha\,a\,u_\alpha^* - a = \alpha(i)\,q_i\,a\,q_j
  \overline{\alpha(j)}-q_i\,a\,q_j =(\alpha(i)\overline{\alpha(j)}-1)a
  \]
  and therefore
  \[\norm{\Ad u_\alpha-\id}\geq
  \max_{i,j\in I}\Delta_{\set{i,j}}(\alpha,1)=\Delta_I(\alpha,1)\;,
  \]
  as desired.
\end{proof} 

Now, for $\alpha\in \cUN$ define a unitary element of $M(A)$ by
\[u_\alpha=\sum_{i\in \bbN} \alpha(i)r_i\;,
\]
and let $\dot u_\alpha$ denote the image of $u_\alpha$ in $\cq(A)$.

\begin{lemma}
  \label{lem:welldefined}
  Suppose once again that $A$ satisfies Hypothesis~\ref{hyp:A}.  Given
  an infinite subset $X\subset\bbN$, the map $\cUN\to\Aut(C_X(A))$
  defined by
  \[\alpha\mapsto\Ad\dot u_\alpha
  \]
  has kernel precisely equal to $\bfF_X$.
\end{lemma}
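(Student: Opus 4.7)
My plan is to analyze both inclusions by applying Lemma~\ref{lem:key} to the corner $P_j A P_j$, where $P_j = p_{I(X,j) \cup I(X,j+1)}$; this corner has unit $P_j$, and the projections $r_k$ for $k \in I(X,j) \cup I(X,j+1)$ are mutually orthogonal, sum to $P_j$, and satisfy $r_i A r_j \neq 0$ by Hypothesis~\ref{hyp:A}. Before beginning either direction, I would note that because $u_\alpha$ commutes with every finite-sum projection $p_I$, conjugation by $u_\alpha$ preserves the block-diagonality condition defining $DD_X(A)$, so $\Ad \dot u_\alpha$ is a well-defined automorphism of $C_X(A)$.

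For $\bfF_X \subseteq \ker$, I would fix $\alpha \in \bfF_X$ and $m \in DD_X(A)$, set $c = u_\alpha m u_\alpha^* - m \in DD_X(A)$, and use the commutation of $u_\alpha$ with $P_j$ together with $P_j m P_j \in P_j A P_j$ to invoke the upper bound in Lemma~\ref{lem:key}:
\[
\|P_j c P_j\| = \|u_\alpha (P_j m P_j) u_\alpha^* - P_j m P_j\| \leq 2\,\Delta_{I(X,j) \cup I(X,j+1)}(\alpha, 1)\,\|m\|.
\]
The right-hand side tends to $0$ as $j \to \infty$ by the defining property of $\bfF_X$. To conclude $c \in A$, I would decompose $c = c^{\mathrm{ev}} + c^{\mathrm{od}} - d$, where $c^{\mathrm{ev}} = \sum_i P_{2i} c P_{2i}$ and $c^{\mathrm{od}} = \sum_i P_{2i+1} c P_{2i+1}$ have mutually orthogonal-support summands, while $d = \sum_j p_{I(X,j)} c p_{I(X,j)}$ subtracts the block-diagonal pieces that would otherwise be counted twice; the identity uses $c \in DD_X(A)$ to discard entries with $|i-j| \geq 2$. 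Since each $P_j$ lies in $A$, each of the three sums is a norm-convergent series of elements of $A$ with summand norms bounded by $\|P_j c P_j\| \to 0$, so each lies in $A$, and hence so does $c$.

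For the reverse inclusion $\ker \subseteq \bfF_X$: if $\alpha \notin \bfF_X$, I would extract $\epsilon > 0$ and a sparse sequence $j_n$ with $j_{n+1} \geq j_n + 2$ and $\Delta_{I(X,j_n) \cup I(X,j_n + 1)}(\alpha, 1) \geq \epsilon$ for all $n$; the gap $\geq 2$ makes the near-blocks $P_{j_n}$ mutually orthogonal. Applying the lower bound of Lemma~\ref{lem:key} to the corner $P_{j_n} A P_{j_n}$ furnishes norm-one elements $a_n$ in that corner with $\|u_\alpha a_n u_\alpha^* - a_n\| \geq \epsilon/2$. I would then set $m = \sum_n a_n$, which converges strictly to an element of $M(A)$ lying in $DD_X(A)$ by orthogonality of supports, and observe that $u_\alpha m u_\alpha^* - m = \sum_n (u_\alpha a_n u_\alpha^* - a_n)$ has orthogonally supported summands of norm $\geq \epsilon/2$, so fails to lie in $A$. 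Thus $\Ad \dot u_\alpha$ acts nontrivially on the class of $m$ in $C_X(A)$.

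The principal obstacle is the clean deduction that vanishing near-block norms implies $c \in A$: the even/odd/diagonal decomposition is essential in order to reduce to summands with genuinely orthogonal supports, and one must also exploit the approximate-unit hypothesis to conclude that each $r_k$, and hence each finite projection $P_j$, actually lies in $A$, so that the orthogonal summands $P_{2i} c P_{2i}$ are honest elements of $A$ rather than only multipliers.
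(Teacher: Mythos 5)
Your proof is correct and takes essentially the same approach as the paper: both reduce to the orthogonally supported even and odd corners $P_j A P_j$ and apply Lemma~\ref{lem:key} there, the only cosmetic difference being that the paper decomposes $m = m_e + m_o$ and computes the quotient norm of each piece via a $\limsup$-of-blocks formula, while you decompose $c = u_\alpha m u_\alpha^* - m$ into three orthogonally supported series. One trivial bookkeeping slip: the identity $c = c^{\mathrm{ev}} + c^{\mathrm{od}} - d$ is off at the $j=0$ block, since $p_{I(X,0)}\,c\,p_{I(X,0)}$ appears only once in $c^{\mathrm{ev}} + c^{\mathrm{od}}$ (from $P_0$ alone) yet is subtracted once in $d$; the correct identity is $c = c^{\mathrm{ev}} + c^{\mathrm{od}} - d + p_{I(X,0)}\,c\,p_{I(X,0)}$, but since this extra term already lies in $A$ the conclusion $c \in A$ is unaffected.
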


\begin{proof}
  Fix $\alpha\in \bbT^{\bbN}$.  Given a multiplier $m\in DD_{X_e}(A)$,
  the proof of Lemma~\ref{lem:stratify} shows that $m$ can be written
  as $m_e+m_o$ where
  \begin{align*}
    m_e&\in\sum p_{I(X,2i)\cup I(X,2i+1)}\,A\,p_{I(X,2i)\cup I(X,2i+1)}\;,\\
    m_o&\in\sum p_{I(X,2i+1)\cup I(X,2i+2)}\,A\,p_{I(X,2i+1)\cup I(X,2i+2)}\;.
  \end{align*}
  Dealing first with $m_e$, it is not hard to compute that
  \[\norm{\dot u_\alpha\,\dot m_e\,\dot u_\alpha^*-\dot m_e}
  =\limsup_j \norm{(u_\alpha\,m_e\,u_\alpha^*-m_e)p_{I(X,2j)\cup I(X,2j+1)}}\;.
  \]
  Using Lemma~\ref{lem:key}, whenever $\alpha\in \bfF_X$ we have that
  the right-hand side is zero.  The same argument shows that we have
  $\norm{\dot u_\alpha\,\dot m_o\,\dot u_\alpha^*-\dot m_o}=0$, and it
  follows that $\alpha$ is in the kernel.
 
  Conversely, if $\alpha\notin \bfF_X$ then there is $\epsilon>0$ such
  that for infinitely many $i\in\NN$ we have
  \[\Delta_{I(X,i)\cup I(X,i+1)}(\alpha,1)\geq \epsilon. 
  \]
  By Lemma~\ref{lem:key}, for such $j$ we can find an element
  \[a_i\in p_{I(X,i)\cup I(X,i+1)}\,A\,p_{I(X,i)\cup I(X,i+1)}
  \]
  such that $\norm{u_\alpha\,a_i\,u_\alpha^*-a_i}\geq
  \epsilon\norm{a_i}$.  We may assume either that all of the $i$'s are
  even or that all of the $i$'s are odd.  Renormalizing so that
  $\|a_i\|=1$ for all such $i$ and letting $a=\sum_i a_i$, we have an
  element of $DD_X(A)$ (in fact a block-diagonal one) witnessing that
  $\Ad \dot u_\alpha$ is not the identity map.
\end{proof} 

The final component of the argument involves patching together a
coherent sequence of partial inner automorphisms to construct an outer
automorphism.  Suppose that $\cU\subset\cP(\bbN)$ is directed under
$\supset^*$.  For $X\in \cU$ let
\[\bfA_X=\{\Phi\in \Aut(C_X(A))\mid\Phi\text{ normalizes }C_Y(A)
\text{ whenever }X\subset^* Y\in \cU\}\;,
\]
and for $X\subset^*Y$ let $\pi_{YX}\colon \bfA_X\to \bfA_Y$ denote
the map
\[
\pi_{YX}(\Phi)=\Phi\rs C_Y(A)\;.
\]
Then $(\bfA_X)_{X\in\cU}$ together with the maps $\pi^X_Y$ forms an
inverse system of groups.  The following result shows that, assuming
$\cU$ is large enough, any thread through $(\bfA_X)_{X\in\cU}$ gives
rise to an element of $\Aut(\cq(A))$.  This is the evolution of
\cite[Lemma~1.3]{Fa:All}.
   
\begin{lemma}
  \label{lem:patch} 
  Suppose that $A$ satisfies Hypothesis~\ref{hyp:A} and
  $\cU\subset\mathcal P(\NN)$ satisfies Hypothesis~\ref{hyp:U}.  Then
  there is a group homomorphism from $\ilim\bfA_X\to\Aut(\cq(A))$ of
  the form
  \[ (\Phi_X)_{X\in \cU}\mapsto \Phi
  \]
  which satisfies $\Phi(\dot m)=\Phi_X(\dot m)$ for $\dot m\in
  C_X(A)$.
\end{lemma}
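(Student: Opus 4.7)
The plan is to define $\Phi$ pointwise and then exploit the closure of $\cU$ under finite intersections to verify that the resulting map is a well-defined automorphism. Given $\dot m \in \cq(A)$, Lemma~\ref{lem:stratify} together with the remark following it (applicable since $\cU$ is dominating by Hypothesis~\ref{hyp:U}) produces some $X \in \cU$ with $\dot m \in C_X(A)$, and I would set $\Phi(\dot m) := \Phi_X(\dot m)$.

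The key technical preliminary I need is the following inclusion: if $Z, X \in \cU$ with $Z \subset X$, then $C_X(A) \subset C_Z(A)$. To establish this, note that $Z \subset X$ forces the partition $I(X, \cdot)$ to refine $I(Z, \cdot)$ exactly, because the endpoints of each $I(Z, k)$ lie in $Z \subset X$, so each $I(Z, k)$ is a disjoint union of consecutive $I(X, \cdot)$-intervals. Consequently, if $m \in DD_X(A)$ and $|i - j| \geq 2$, then any $X$-index $k$ with $I(X, k) \subset I(Z, i)$ and any $k'$ with $I(X, k') \subset I(Z, j)$ satisfy $|k - k'| \geq 2$, since at least one intermediate $I(Z, \cdot)$-block separates them and itself contains at least one $X$-index. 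Hence $p_{I(Z, i)} \, m \, p_{I(Z, j)} = 0$, so $m \in DD_Z(A)$.

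With this inclusion in hand, well-definedness of $\Phi$ will follow: if $\dot m \in C_X(A) \cap C_Y(A)$, setting $Z = X \cap Y \in \cU$ yields $\dot m \in C_Z(A)$, and the thread condition on $(\Phi_X)_{X \in \cU}$ gives $\Phi_Z \rs C_X(A) = \Phi_X$ and $\Phi_Z \rs C_Y(A) = \Phi_Y$; hence $\Phi_X(\dot m) = \Phi_Z(\dot m) = \Phi_Y(\dot m)$. To verify that $\Phi$ is a $*$-homomorphism, for each of $\dot m$, $\dot n$, $\dot m + \dot n$, $\dot m \dot n$, and $\dot m^*$ I would pick an element of $\cU$ in whose $C$-fragment it resides, take their common intersection $Z \in \cU$, and transfer the algebraic identities from the fact that $\Phi_Z$ is already a $*$-homomorphism on $C_Z(A)$.

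Bijectivity should be inherited directly from the $\Phi_X$: injectivity because $\Phi(\dot m) = 0$ forces $\Phi_X(\dot m) = 0$ for some $X$, and surjectivity because for $\dot n \in C_X(A)$ the preimage $\Phi_X^{-1}(\dot n)$ lies in $C_X(A)$ and is mapped to $\dot n$. The group homomorphism property will follow by noting that $\Psi_X(\dot m) \in C_X(A)$ whenever $\dot m \in C_X(A)$, so that $\Phi \circ \Psi$ restricted to $C_X(A)$ agrees with $\Phi_X \circ \Psi_X$. The main obstacle I anticipate is the key inclusion $C_X(A) \subset C_Z(A)$ for $Z \subset X$; once it is established, everything else reduces to bookkeeping enabled by the directedness of $\cU$ under $\supset^*$.
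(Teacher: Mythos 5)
Your proof is correct and follows essentially the same strategy as the paper's: define $\Phi$ pointwise via the stratification from Lemma~\ref{lem:stratify}, use the thread condition together with directedness of $\cU$ under $\supset^*$ to check well-definedness (and the algebraic properties), and obtain invertibility from the inverse thread $(\Phi_X^{-1})_{X\in\cU}$. Your explicit verification of the inclusion $DD_X(A)\subset DD_Z(A)$ for $Z\subset X$ in $\cU$ is a worthwhile detail that the paper leaves implicit, but it does not change the overall route.
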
 

\begin{proof}
  Given $\Phi_X$ for $X\in\cU$ and $b\in M(A)$ we define $\Phi(\dot
  m)$ as follows.  By Lemma~\ref{lem:stratify} and the remark
  following its proof there exists $X\in \cU$ such that $m=d+a$ where
  $d\in DD_X(A)$ and $a\in A$.  Then we simply let:
  \[\Phi(\dot m)=\Phi_X(\dot d)\;.
  \]
  Then since $X\supset^*Y$ implies that $\Phi_Y\rs \bfA_X$ agrees with
  $\Phi_X$, we clearly have that $\Phi(\dot m)$ does not depend on the
  choice of $X$.  Moreover, $\Phi$ is invertible since its inverse
  corresponds to the thread $\left(\Phi_X^{-1}\right)_{X\in\cU}$.
\end{proof} 

Putting together the maps from Lemma~\ref{lem:welldefined} and
Lemma~\ref{lem:patch} we now obtain the desired embedding.

\begin{proof}[Proof of Theorem~\ref{thm:corona}]
  Let $(\alpha_X)_{X\in\cU}$ be given representative of
  $\ilim_{X\in\cU}\bfG_X$ (in other words, the residues
  $([\alpha_X]_{\bfF_X]})_{X\in\cU}$ form a thread in the inverse
  system $(\bfG_X)_{X\in\cU}$).  Then it is easy to see that $(\Ad\dot
  u_{\alpha_X})_{X\in\cU}$ form a thread in $(\bfA_X)_{X\in\cU}$, and
  the map $\ilim_{X\in\cU}\bfG_X\to\ilim_{X\in\cU}\bfA_X$
  defined by
  \[(\alpha_X)_{X\in\cU}\mapsto (\Ad\dot u_{\alpha_X})_{X\in\cU}
  \]
  is well-defined and one-to-one by Lemma~\ref{lem:welldefined}.
  Hence we may let $\Phi$ be the corresponding element of $\Aut(\cq(A))$
  given by Lemma~\ref{lem:patch}.
  %
\end{proof}

%

\section{A stronger result}
\label{S.Weak}

In this section, we show it is possible to establish the conclusion of
Theorem~\ref{thm:corona} using a hypothesis that is slightly weaker
than Hypothesis~\ref{hyp:A}.  This will be done by establishing an
analog of each of the lemmas from the previous section.

\begin{hypothesis}
  \label{hyp:weak}
  We assume $A$ has a sequence of positive elements $r_i$, for
  $i\in \bbN$, such that:
  \begin{itemize}
  \item for all $i,j,k$ and all $\epsilon>0$ there exists $a\in A$
    such that $\|a\|=1$ and $\norm{r_i^k\,a\,r_j^k}\geq1-\epsilon$.
  \item the sequence of partial sums $p_n=\sum_{i<n}r_i$ form an
    increasing approximate unit for $A$ with $p_{n+1}p_n=p_n$ for all
    $n$.
  \end{itemize}
\end{hypothesis}

Using the property that $p_{n+1}p_n=p_n$, it is easy to see that
$r_ir_j=0$ whenever $\abs{i-j}\geq2$.  Also, if we define
\[p_I=\sum_{i\in I}r_i
\]
for $I$ an interval of $\NN$, then we similarly have (the definition
of $I(X,i)$ is given before Definition~\ref{D.1.2})
$p_{I(X,i)}p_{I(X,j)}=0$ whenever $\abs{i-j}\geq2$.

\begin{theorem}
  \label{thm:corona-weak}
  Suppose that $\mathcal U\subset\mathcal P(\NN)$ satisfies
  Hypothesis~\ref{hyp:U} and that $A$ satisfies
  Hypothesis~\ref{hyp:weak}.  Then there is an embedding of
  $\ilim_{X\in\cU}\bfG_X$ into the automorphism group of $\cq(A)$.
\end{theorem}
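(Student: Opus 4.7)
The plan is to mirror the proof of Theorem~\ref{thm:corona} by establishing analogs of Lemmas~\ref{lem:stratify}, \ref{lem:key}, \ref{lem:welldefined}, and \ref{lem:patch} under the weaker Hypothesis~\ref{hyp:weak}, and then assembling the embedding $\ilim_{X\in\cU}\bfG_X\hookrightarrow\Aut(\cq(A))$ in exactly the same way. The analog of Lemma~\ref{lem:stratify} requires no substantive change: its proof uses only that $p_n$ is an approximate unit and that $p_{I(X,i)}p_{I(X,j)}=0$ whenever $\abs{i-j}\geq 2$, the latter of which the text has already deduced from $p_{n+1}p_n=p_n$. Likewise Lemma~\ref{lem:patch} is purely formal in the $r_i$ and transfers verbatim.

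The main work lies in the analog of Lemma~\ref{lem:key}, where the defining obstacle is that $u_\alpha=\sum_i\alpha(i)r_i$ is no longer a unitary in $M(A)$: a direct computation gives
\begin{equation*}
u_\alpha^*u_\alpha-1 \;=\; \sum_i\bigl[(\alpha(i)\overline{\alpha(i+1)}-1)r_ir_{i+1} + \overline{(\alpha(i)\overline{\alpha(i+1)}-1)}\,r_{i+1}r_i\bigr],
\end{equation*}
which need not lie in $A$. The plan is to use functional calculus to absorb this defect: for large $k$ the element $r_i^k$ concentrates spectrally near its range projection (the second clause of Hypothesis~\ref{hyp:weak} forces $1\in\mathrm{spec}(r_i)$), and we build from $\sum_i\alpha(i)r_i^k$ an honest unitary $u_{\alpha,k}\in M(A)$ which implements the same conjugation on $C_X(A)$ modulo an error vanishing as $k\to\infty$. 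The upper bound $\norm{\Ad u_\alpha - \Ad u_\beta}\leq 2\Delta_I(\alpha,\beta)$ is inherited from the original argument via the estimate $\norm{u_\alpha-1}\leq\Delta_I(\alpha,1)$ after normalizing $\beta=1$. The lower bound is where the corner condition of Hypothesis~\ref{hyp:weak} enters decisively: given $i,j$ with $\Delta_{\set{i,j}}(\alpha,1)>0$ and any $\epsilon>0$, the hypothesis supplies $a\in A$ with $\norm{a}=1$ and $\norm{r_i^k a r_j^k}\geq 1-\epsilon$; this element sits essentially in the corner $r_i^k A r_j^k$, and conjugation by $u_\alpha$ scales it by $\alpha(i)\overline{\alpha(j)}$ up to $O(\epsilon)$. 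With this in hand, the analog of Lemma~\ref{lem:welldefined} follows by the same two-case argument, the witnessing multiplier in the nontrivial case being assembled as a block-diagonal sum of such hypothesis-supplied corner elements, lying in $DD_X(A)$.

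The hard part is verifying that the approximate unitary construction yields a \emph{genuine} automorphism of the corona rather than merely an approximate one. Concretely, one must show that the defect in $u_\alpha^*u_\alpha-1$---though not lying in $A$ as an element of $M(A)$---acts trivially modulo $A$ on every fragment $C_X(A)$, so that $\Ad\dot u_\alpha$ is a bona fide $*$-homomorphism there. This is precisely the role of the uniformity of the hypothesis over all powers $k$: it supplies enough mass in every corner $r_i^k A r_j^k$ to control the error at every spectral scale, uniformly over $X\in\cU$. Once this verification is completed, the remaining assembly proceeds exactly as in the proof of Theorem~\ref{thm:corona}: each thread in $\ilim_{X\in\cU}\bfG_X$ maps to a compatible family of inner automorphisms of the fragments $C_X(A)$, Lemma~\ref{lem:patch} lifts it to a single automorphism of $\cq(A)$, and distinctness follows from the injectivity of $\alpha\mapsto\Ad\dot u_\alpha$ modulo $\bfF_X$ supplied by the analog of Lemma~\ref{lem:welldefined}.
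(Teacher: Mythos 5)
Your high-level structure is right: the theorem is proved by adapting the lemmas of Section~\ref{S.Main}, and the stratification and patching lemmas indeed transfer with no change. But you miss the key idea the paper uses to handle the failure of $u_\alpha$ to be unitary, and your substitute plan would not work as described.

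The paper does not try to manufacture genuine unitaries $u_{\alpha,k}$ in $M(A)$ by pushing the exponent $k$ to infinity. That plan fails at the start: although the first clause of Hypothesis~\ref{hyp:weak} (not the second, as you say) does force $1\in\mathrm{spec}(r_i)$ via $\|r_i^k\|\geq 1-\epsilon$ for all $k$, this does not imply that $r_i^k$ concentrates near a nonzero ``range projection.'' If $1$ is in the spectrum but carries no point mass, $r_i^k\to 0$ strictly, the sum $\sum_i\alpha(i)r_i^k$ degenerates, and there is no candidate limit unitary. More fundamentally, if $\dot u_\alpha$ is not unitary in $\cq(A)$ then $\Ad\dot u_\alpha$ is not a $*$-homomorphism on $C_X(A)$, so the assertion that the defect ``acts trivially modulo $A$ on every fragment'' is exactly the unproved claim on which your whole construction rests; you flag it as ``the hard part'' and then leave it unresolved.

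What the paper actually does is much cleaner: it restricts attention to $\alpha\in Z:=\{\alpha\in\TT^\NN:\alpha(i+1)-\alpha(i)\to 0\}$ and shows that for such $\alpha$ the defect $u_\alpha u_\alpha^*-1=\sum_i 2\bigl[\Re(\alpha(i)\overline{\alpha(i+1)})-1\bigr]r_ir_{i+1}$ lies in $A$ (using $r_ir_j=0$ for $|i-j|\geq 2$ and the Gelfand picture of the commuting $r_i$, so that each point of the spectrum meets the support of at most three terms $r_ir_{i+1}$). Hence $\dot u_\alpha$ is an honest unitary in $\cq(A)$, and $\Ad\dot u_\alpha$ is a bona fide automorphism with no approximation argument needed. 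This restriction then forces one more step you also omit: one must revisit the recursive construction in Theorem~\ref{thm:ilim} and verify that all the witnesses $\alpha_s$ can be chosen in $Z$ (the paper does this, arranging $\alpha_s(i)-\alpha_s(i+1)\to 0$ at both successor and limit stages by thinning $X_\infty$), so that the resulting family of $Z$-threads still has size $2^{\aleph_1}$. Without this check, even a correct $Z$-restricted construction would not yield the cardinality needed for the main theorem. Your analysis of the upper and lower bounds in the analogue of Lemma~\ref{lem:key} and your use of the corner condition to supply witnessing elements is essentially right; the gap is in the treatment of non-unitarity and in omitting the $Z$-thread count.
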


In the proof of Theorem~\ref{thm:corona-weak}, we will not use the
first condition of Hypothesis~\ref{hyp:weak} directly, but rather the
following consequence of it.

\begin{lemma}
  \label{L.epsilon}
  Suppose that the sequence $r_i$ satisfies the first condition in
  Hypothesis~\ref{hyp:weak}.  Then for every $i,j$ and $\epsilon>0$
  there exists $a\in A$ such that $\norm{a}=1$,
  $\norm{r_i\,a\,r_j}\geq1-\epsilon$, and
  $\norm{r_i\,a\,r_j-a}<\epsilon$.
\end{lemma}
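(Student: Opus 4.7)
The plan is to exploit the freedom to take $k$ arbitrarily large in the first condition of Hypothesis~\ref{hyp:weak}: for large $k$, the element $r_i^k$ is spectrally concentrated where $r_i$ is close to its norm, so a suitable spectral cutoff of $r_i^k a_0 r_j^k$ will be essentially absorbed by $r_i$ on the left and $r_j$ on the right. A preliminary observation, not stated in the hypothesis but forced by it, is that $\norm{r_i}=1$ for every $i$. The upper bound $\norm{r_i}\leq 1$ follows from $r_i\leq p_{i+1}$ together with $\norm{p_{i+1}}\leq 1$ (the standard normalization of an approximate unit). For the lower bound, applying the first condition with $j=i$, any $k$, and any $\epsilon<1$ produces $a$ with $\norm{a}=1$ and $\norm{r_i^k a r_i^k}\geq 1-\epsilon$; since $\norm{r_i^k a r_i^k}\leq\norm{r_i}^{2k}$, this forces $\norm{r_i}\geq(1-\epsilon)^{1/(2k)}$, which tends to $1$. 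Thus $\sigma(r_i)\subseteq[0,1]$ with $1\in\sigma(r_i)$.

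Given $\epsilon>0$, fix $\eta\in(0,\epsilon/20)$ and let $f\colon[0,1]\to[0,1]$ be continuous with $f=0$ on $[0,1-2\eta]$ and $f=1$ on $[1-\eta,1]$. Since $f(0)=0$ we have $f(r_i),f(r_j)\in A$. Continuous functional calculus yields
\[
\norm{r_i f(r_i)-f(r_i)}=\max_{t\in\sigma(r_i)}|t-1|f(t)\leq 2\eta,
\]
and similarly $\norm{f(r_j)r_j-f(r_j)}\leq 2\eta$. Choose $k$ large enough that $(1-\eta)^k<\eta$; then $\norm{(1-f(r_i))r_i^k}<\eta$ and $\norm{r_j^k(1-f(r_j))}<\eta$. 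Apply the first condition of Hypothesis~\ref{hyp:weak} with this $k$ and some small $\delta$ to obtain $a_0\in A$ with $\norm{a_0}=1$ and $\norm{r_i^k a_0 r_j^k}\geq 1-\delta$.

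Set $a_2=f(r_i)\,r_i^k a_0 r_j^k\,f(r_j)$. A short telescoping decomposition together with the two $\eta$-bounds above yields $\norm{r_i^k a_0 r_j^k-a_2}\leq 2\eta$, so $\norm{a_2}\geq 1-\delta-2\eta$. For the absorption property, write
\[
r_i a_2 r_j-a_2=(r_i f(r_i)-f(r_i))\,r_i^k a_0 r_j^k\,f(r_j)r_j+f(r_i)\,r_i^k a_0 r_j^k\,(f(r_j)r_j-f(r_j)),
\]
and use the two $2\eta$-bounds to conclude $\norm{r_i a_2 r_j-a_2}\leq 4\eta$. Setting $a=a_2/\norm{a_2}$ then gives $\norm{a}=1$, $\norm{r_i a r_j-a}\leq 4\eta/(1-\delta-2\eta)$, and $\norm{r_i a r_j}\geq 1-4\eta/(1-\delta-2\eta)$, both $<\epsilon$ for sufficiently small $\eta$ and $\delta$. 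The main obstacle, I expect, is the preliminary observation $\norm{r_i}=1$: without it the spectral cutoff has no foothold, since $r_i^k$ could either blow up or decay to $0$ as $k$ grows, and the entire reduction to elements absorbed by $r_i$ and $r_j$ breaks down.
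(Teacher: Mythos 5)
Your proof is correct, but it takes a genuinely different and noticeably heavier route than the paper's. The paper dispenses with spectral cutoffs entirely: observe that for any positive contraction $r$ one has $\norm{r^{k+1}-r^k}\to 0$ (the maximum of $t^k(1-t)$ on $[0,1]$ tends to $0$), pick $k$ so that $\norm{r_i^{k+1}-r_i^k}\leq\delta$ and $\norm{r_j^{k+1}-r_j^k}\leq\delta$, apply the first condition of Hypothesis~\ref{hyp:weak} with exponent $k+1$ to get $a_0$, and set $a=r_i^k a_0 r_j^k$. Then $r_i a r_j = r_i^{k+1} a_0 r_j^{k+1}$ has norm $\geq 1-\delta$ by choice of $a_0$, and $r_i a r_j - a$ telescopes to $(r_i^{k+1}-r_i^k)a_0 r_j^{k+1} + r_i^k a_0(r_j^{k+1}-r_j^k)$, which has norm $\leq 2\delta$; since $\norm{a}\geq\norm{r_i a r_j}\geq 1-\delta$ and $\norm{a}\leq 1$, a renormalization and a small enough $\delta$ finish the proof. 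Both approaches exploit the same spectral phenomenon, namely that $r^k$ ``stabilizes'' as $k\to\infty$, but the paper encodes it purely algebraically through $\norm{r^{k+1}-r^k}\to 0$, which requires neither a bump function nor the auxiliary fact $\norm{r_i}=1$. Your deduction of $\norm{r_i}=1$ from the hypothesis is correct and is genuinely needed for your cutoff $f(r_i)$ to be nonzero, but it is extra work the paper's argument sidesteps; your closing remark that this is ``the main obstacle'' is therefore an artifact of the chosen route rather than of the lemma itself.
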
 

\begin{proof} 
  By the continuous functional calculus, for every contraction $r$ we
  have $r^{k+1}-r^k\rightarrow0$.  It follows that for every
  $\delta>0$ there exists $k$ large enough so that
  $\norm{r_{i}^{k+1}-r_{i}^k}\leq\delta$ and
  $\norm{r_{j}^{k+1}-r_{j}^k}\leq\delta$.  Now, choose $a_0$ such that
  $\norm{a_0}=1$ and $\norm{r_i^{k+1}\,a_0\,r_j^{k+1}}\geq 1-\delta$.
  Then it is easy to see that the element $a=r_i^k\,a_0\,r_j^k$
  satisfies $\norm{r_i\,a\,r_j - a}\leq2\delta$ and
  $\norm{r_i\,a\,r_j}\geq1-\delta$.  Since the $r_i$ are
  norm-decreasing, we of course have $\norm{a}\geq1-\delta$ as well.
  It follows that we can renormalize $a$ and choose $\delta$ small
  enough to obtain the desired inequality.
\end{proof} 

We begin the proof of Theorem~\ref{thm:corona-weak} by again defining
the system of multipliers
\[DD_X(A)=\set{m\in M(A)\mid p_{I(X,i)}\,m\,p_{I(X,j)}=0
  \textrm{ whenever }\abs{i-j}\geq2}\;,
\]
and let $C_X(A)$ denote the image of $DD_X(A)$ in $\cq(A)$.  The next
result again shows that the $C_X(A)$ stratify all of $\cq(A)$.  The
proof is formally identical to that of Lemma~\ref{lem:stratify}.

\begin{lemma}
  For all $m\in M(A)$ there exists a subset $X\subset\NN$ such that
  $m\in DD_X(A)+A$.
\end{lemma}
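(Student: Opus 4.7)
The plan is to follow the template of Lemma~\ref{lem:stratify} line by line, noting that the only properties of the $p_n$ that proof actually uses are: (i) $p_n$ is an approximate unit for $A$, (ii) $\|p_n\|\le 1$, and (iii) the ``far'' orthogonality $p_{I(X,a)}\,p_{I(X,b)}=0$ whenever $\lvert a-b\rvert\ge 2$. All three are valid under Hypothesis~\ref{hyp:weak}: (i)--(ii) are immediate, and (iii) is the observation recorded in the paragraph immediately following Hypothesis~\ref{hyp:weak}, derived from $p_{n+1}p_n=p_n$.

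First I would build $X=\{n(j):j\ge 1\}$ inductively, starting from $n(0)=0$. At stage $j$, the element $m\,p_{n(j)}$ lies in $A$ because $p_{n(j)}=\sum_{i<n(j)}r_i$ is a finite combination of elements of $A$; since $(p_k)$ is an approximate unit for $A$, I can choose $n(j+1)>n(j)$ large enough that
\[
\bigl\|(1-p_{n(j+1)})\,m\,p_{n(j)}\bigr\|\le 2^{-j}\quad\text{and}\quad
\bigl\|(1-p_{n(j+1)})\,m^{*}\,p_{n(j)}\bigr\|\le 2^{-j}.
\]

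Next I would define $m_e$ and $m_o$ by the same formulas as in the proof of Lemma~\ref{lem:stratify}, now interpreting $p_I=\sum_{i\in I}r_i$ in the sense of Hypothesis~\ref{hyp:weak}. Setting $a=m-m_e-m_o$, the telescoping norm estimate $\|(1-p_{n(i)})\,a\|\le 2^{-i+4}$ goes through unchanged because it only invokes the inductive bound and $\|p_n\|\le 1$; hence $a\in A$.

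Finally I would verify that $d=m_e+m_o\in DD_X(A)$, i.e.\ that $p_{I(X,i)}\,d\,p_{I(X,j)}=0$ whenever $\lvert i-j\rvert\ge 2$. Each summand of $m_e$ and $m_o$ has the form $p_{I(X,a)\cup I(X,a+1)}\,m\,p_{I(X,b)\cup I(X,b+1)}$ with $\lvert a-b\rvert\le 1$; applying $p_{I(X,i)}$ on the left (resp.\ $p_{I(X,j)}$ on the right) and invoking the far orthogonality (iii), the summand is annihilated unless $i\in\{a,a+1\}$ and $j\in\{b,b+1\}$, which is incompatible with $\lvert i-j\rvert\ge 2$. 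The only step requiring care beyond Lemma~\ref{lem:stratify} is precisely this last verification, since the $p_{I(X,\cdot)}$ in the weak setting are not orthogonal projections and genuinely overlap at consecutive indices; but the full strength of adjacent-orthogonality is never needed---only the separation $\lvert a-b\rvert\ge 2$ provided by Hypothesis~\ref{hyp:weak}---so the argument carries over intact.
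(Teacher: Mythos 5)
Your overall strategy---following the proof of Lemma~\ref{lem:stratify} line by line---is exactly what the paper intends; the paper's entire ``proof'' here is the one-sentence remark that it is formally identical to Lemma~\ref{lem:stratify}. The construction of the sequence $n(j)$, the formulas for $m_e$ and $m_o$, and the estimate $\norm{(1-p_{n(i)})\,a}\le 2^{-i+4}$ showing $a=m-m_e-m_o\in A$ all transfer verbatim, for the reasons you give.

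The final verification that $d=m_e+m_o$ lies in $DD_X(A)$, however, contains a real gap, at precisely the point you flag and then dismiss. Far orthogonality gives $p_{I(X,c)}\,p_{I(X,d)}=0$ only when $\abs{c-d}\ge 2$, so $p_{I(X,i)}\,p_{I(X,a)\cup I(X,a+1)}$ is guaranteed to vanish only when $i\notin\set{a-1,a,a+1,a+2}$, not when $i\notin\set{a,a+1}$ as you assert: the adjacent overlap you yourself mention produces nonzero spillover at $i=a-1$ and $i=a+2$. Consequently a summand $p_{I(X,2k)\cup I(X,2k+1)}\,m\,p_{I(X,2k)\cup I(X,2k+1)}$ of $m_e$ can survive compression by $p_{I(X,i)}$ on the left and $p_{I(X,j)}$ on the right for $i,j$ ranging over $\set{2k-1,\dots,2k+2}$, in particular with $\abs{i-j}=2$ or $3$ (take $i=2k-1$, $j=2k+1$); the analogous failure occurs for the $m_o$ terms. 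So the $d$ you construct need not lie in $DD_X(A)$ for the $X$ you built.

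The fix is short but must be said. With $X_0=\set{n(j):j\in\NN}$ as you constructed it, the analysis above does give $p_{I(X_0,i)}\,d\,p_{I(X_0,j)}=0$ whenever $\abs{i-j}\ge 4$. Now pass to the coarser set $X=\set{n(X_0,3k):k\in\NN}$, so that each $I(X,k)$ is the disjoint union of three consecutive $X_0$-intervals and $p_{I(X,k)}=\sum_{m<3}p_{I(X_0,3k+m)}$. Then $\abs{k-l}\ge 2$ forces $\abs{(3k+m)-(3l+m')}\ge 4$ for all $m,m'<3$, hence $p_{I(X,k)}\,d\,p_{I(X,l)}=0$, i.e.\ $d\in DD_X(A)$ with respect to this coarser $X$. (The paper's phrase ``formally identical'' quietly elides exactly this coarsening step.)
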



The next result gives the slight strengthening of Lemma~\ref{lem:key}
necessary for our situation.  Note that the difficulty stems from the
fact that this time, $u_\alpha$ is not necessarily a unitary element of
$A_0$.

\begin{lemma}
  Suppose that $A_0$ is a C*-algebra containing a sequence of positive
  elements $q_0,\ldots,q_{n-1}$ such that
  \begin{itemize}
  \item for all $i,j,k$ and all $\epsilon>0$ there exists $a_0\in A_0$
    such that $\|a_0\|=1$ and
    $\norm{r_i^k\,a_0\,r_j^k}\geq1-\epsilon$, and
  \item $\sum_{i<n}q_i=1$. 
  \end{itemize}
  Letting $I=\set{0,\ldots,n-1}$, for each $\alpha\in\TT^I$ we set
  $u_\alpha=\sum_{i\in I}\alpha(i)q_i$.  Then we have
  \[\Delta_I(\alpha,1)\leq\norm{\Ad u_\alpha-\id}\leq2\Delta_I(\alpha,1)\;.
  \]
\end{lemma}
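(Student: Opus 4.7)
The plan is to mirror the skeleton of Lemma~\ref{lem:key}, addressing the fact that $u_\alpha$ need no longer be unitary. The first ingredient is the basic bound $\|u_\alpha\| \leq 1$, which I would establish via the isometric ``column'' $R=(q_0^{1/2},\dots,q_{n-1}^{1/2})^{\mathsf{T}}$: the identity $R^{*}R=\sum_i q_i=1$ shows $R$ is isometric, and the factorization $u_\alpha = R^{*}\operatorname{diag}(\alpha(i))\,R$ gives $\|u_\alpha\| \leq \max_i|\alpha(i)|=1$.  For the upper bound on $\|\Ad u_\alpha-\id\|$, I follow the normalization of Lemma~\ref{lem:key}: replacing $\alpha$ by $\overline{\alpha(0)}\,\alpha$ (which does not affect $\Ad u_\alpha$) yields $|\alpha(i)-1|\leq\Delta_I(\alpha,1)$ for every $i$, and the same factorization applied to $u_\alpha-1=\sum_i(\alpha(i)-1)q_i$ delivers $\|u_\alpha-1\|\leq\Delta_I(\alpha,1)$.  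Combined with the splitting $u_\alpha a u_\alpha^{*}-a=(u_\alpha-1)\,a\,u_\alpha^{*}+a(u_\alpha^{*}-1)$ and $\|u_\alpha^{*}\|\leq 1$, this yields $\|\Ad u_\alpha-\id\|\leq 2\Delta_I(\alpha,1)$.

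The harder half will be the lower bound, where the original proof used $u_\alpha q_i = \alpha(i) q_i$---an identity no longer available since the $q_i$ are not orthogonal.  I would replace it with an approximate version using high powers $q_i^k$: from $\sum_m q_m = 1$ one has the \emph{exact} identity
\[
u_\alpha q_i^k - \alpha(i)\,q_i^k = \sum_{m \neq i}(\alpha(m)-\alpha(i))\,q_m q_i^k,
\]
so it suffices to show $\|q_m q_i^k\|\to 0$ for each $m\neq i$.  This follows from the operator identity $\sum_{m\neq i}q_i^k q_m q_i^k = q_i^{2k}(1-q_i)$, combined with the functional-calculus estimate $\sup_{\lambda\in[0,1]}\lambda^{2k}(1-\lambda)\leq 1/(2k+1)$; positivity of each summand on the left then gives $\|q_m q_i^k\|^{2}\leq\|q_i^k q_m^{2}q_i^k\|\leq\|q_i^k q_m q_i^k\|\leq 1/(2k+1)$, and the adjoint estimate $\|q_j^k q_m\|\to 0$ is symmetric.

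With these tools the conclusion is routine.  Fix $i,j\in I$ and $\delta>0$; choose $k$ large enough that all $\|q_m q_i^k\|$ and $\|q_j^k q_m\|$ (for $m\neq i$, resp.\ $m\neq j$) are below $\delta$, and by the first bullet of the hypothesis pick $a_0\in A_0$ with $\|a_0\|=1$ and $\|q_i^k a_0 q_j^k\|\geq 1-\delta$.  Setting $a=q_i^k a_0 q_j^k / \|q_i^k a_0 q_j^k\|$ and applying the approximations $u_\alpha q_i^k\approx\alpha(i)q_i^k$ and $q_j^k u_\alpha^{*}\approx\overline{\alpha(j)}q_j^k$ on either side of $a_0$ yields $\|u_\alpha a u_\alpha^{*}-\alpha(i)\overline{\alpha(j)}\,a\|=O(\delta)$, whence $\|u_\alpha a u_\alpha^{*}-a\|\geq|\alpha(i)\overline{\alpha(j)}-1|-O(\delta)$.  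Letting $\delta\to 0$ and maximizing over $i,j\in I$ gives $\|\Ad u_\alpha-\id\|\geq\Delta_I(\alpha,1)$.  The principal obstacle is precisely this lower bound: the non-orthogonality of the $q_i$ is circumvented only by passing to high powers via the identity $\sum_{m\neq i}q_i^k q_m q_i^k = q_i^{2k}(1-q_i)$, and without this observation the single-index isolation inside $u_\alpha$ would be blocked.
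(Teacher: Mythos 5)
Your proof is correct, and it follows the same broad strategy as the paper's---passing to high powers $q_i^k$---but the internal organization is genuinely different and arguably tighter. The paper routes the lower bound through Lemma~\ref{L.epsilon} (which produces an $a$ with $\|q_{i_0}aq_{j_0}-a\|<\epsilon$ via the estimate $\|r^{k+1}-r^k\|\to0$), then expands $u_\alpha a u_\alpha^*-a=\sum_{i,j}(\alpha(i)\overline{\alpha(j)}-1)q_iaq_j$ and isolates the $(i_0,j_0)$ term, asserting that the off-diagonal contributions $\|q_iaq_j\|$ are small. You instead derive the approximate-orthogonality estimate $\|q_mq_i^k\|\leq(2k+1)^{-1/2}$ directly from the identity $\sum_{m\neq i}q_i^kq_mq_i^k=q_i^{2k}(1-q_i)$ and the functional-calculus bound $\sup_{[0,1]}\lambda^{2k}(1-\lambda)\leq 1/(2k+1)$, and then use the approximate eigenvalue relations $u_\alpha q_i^k\approx\alpha(i)q_i^k$ to conclude. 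This buys two things. First, your approach is more self-contained: the paper's claim that $\|q_iaq_j\|<\epsilon$ for $(i,j)$ away from $(i_0,j_0)$ does not actually follow from the \emph{stated conclusion} of Lemma~\ref{L.epsilon}---one has to unwind its proof to recall that $a$ has the form $q_{i_0}^ka_0q_{j_0}^k$, which is exactly what your explicit estimate exploits. Second, you supply the missing observation $\|u_\alpha\|\leq1$ (via the isometric column $R=(q_i^{1/2})_i$ and the factorization $u_\alpha=R^*\operatorname{diag}(\alpha)R$), which the paper tacitly assumes when re-running the upper-bound argument from Lemma~\ref{lem:key}. Both proofs ultimately rest on the same functional-calculus fact; your packaging of it avoids the small imprecision in the paper's treatment of the off-diagonal terms.
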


\begin{proof}
  The proof used in Lemma~\ref{lem:key} again shows that $\norm{\Ad
    u_\alpha-\id}\leq2\Delta_I(\alpha,1)$.

  For the inequality $\Delta_I(\alpha,1)\leq\norm{\Ad u_\alpha-\id}$,
  we first fix $i_0,j_0$ and for any $a\in A_0$ we write:
  \begin{align}
    u_\alpha\,a\,u_\alpha^*-a
    &=\sum\alpha(i)\overline{\alpha(j)}q_i\,a\,q_j-\sum q_iq_j\notag\\
    &=\sum\left(\alpha(i)\overline{\alpha(j)}-1\right)q_i\,a\,q_j\notag
    \label{eq:key}\\
    &=\left(\alpha(i_0)\overline{\alpha(j_0)}-1\right)q_{i_0}\,a\,q_{j_0}
    +\sum_{\substack{i\neq i_0\\j\neq j_0}}
    \left(\alpha(i)\overline{\alpha(j)}-1\right)q_i\,a\,q_j\;.
  \end{align}
  Given $\epsilon$, we apply Lemma~\ref{L.epsilon} to choose $a$ such
  that $\norm{a}=1$, $\norm{q_{i_0}\,a\,q_{j_0}-a}<\epsilon$ and
  $\norm{q_{i_0}\,a\,q_{j_0}}\geq1-\epsilon$.  Note that the latter
  inequality implies that the whole right-hand term of
  Equation~\eqref{eq:key} is very small.  Indeed, it follows that
  $\norm{q_i\,a\,q_j}<\epsilon$ whenever $i\neq i_0,j\neq j_0$, and
  hence that this last term is bounded by $n\epsilon$.

  These computations imply that the expression in
  Equation~\eqref{eq:key} can be made arbitrarily close to
  $(\alpha(i_0)\overline{\alpha(j_0)}-1)a$, and it follows that
  $\norm{\Ad
    u_\alpha-\id}\geq\abs{\alpha(i_0)\overline{\alpha(j_0)}-1}$.
  Since this is true for all $i_0,j_0\in I$, we can conclude that
  $\norm{\Ad u_\alpha-\id}\geq\Delta_I(\alpha,1)$, as desired.
\end{proof}

Now for each $\alpha\in\TT^\NN$, we again define the corresponding
elements $u_\alpha$ of $M(A)$ by
\[u_\alpha=\sum_{i\in\NN}\alpha(i)r_i\;.
\]
These need not be unitaries, but the following result shows that for
plenty of $\alpha$, the image $\dot u_\alpha$ in $\cq(A)$ will in fact
be unitary.

\begin{lemma}
  If $\alpha\in\TT^\NN$ satisfies $\alpha(i+1)-\alpha(i)\rightarrow0$
  then $\dot u_\alpha$ is a unitary in $\cq(A)$.
\end{lemma}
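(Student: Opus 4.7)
The plan is to show $u_\alpha u_\alpha^* - 1 \in A$ and $u_\alpha^* u_\alpha - 1 \in A$, which together mean $\dot u_\alpha$ is a unitary in $\cq(A)$. I focus on the first identity; the second follows by applying the same argument to $\overline\alpha$.

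The starting point is the band-diagonality $r_ir_j=0$ for $|i-j|\geq 2$, noted immediately after Hypothesis~\ref{hyp:weak}. This collapses $u_\alpha u_\alpha^* = \sum_{i,j}\alpha(i)\overline{\alpha(j)}\,r_ir_j$ (and the corresponding expansion of $1 = u_1u_1^*$) to nearest-neighbor terms only. Because $|\alpha(i)|^2=1$, the diagonal parts $\sum_i r_i^2$ agree; subtracting yields the formal identity
\[
u_\alpha u_\alpha^* - 1 \;=\; \sum_i d_i\, r_i r_{i+1} \;+\; \sum_i \overline{d_i}\, r_{i+1} r_i, \qquad d_i := \alpha(i)\overline{\alpha(i+1)} - 1.
\]
Since $|\alpha(i)|=1$ we have $|d_i| = |\alpha(i+1)-\alpha(i)| \to 0$ by hypothesis.

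The heart of the argument is showing that both sums on the right converge in norm: each partial sum lies in $A$ (since $r_i \in A$, as they are differences of consecutive $p_n$'s), so norm convergence forces the limits into $A$. Set $x_i := r_i r_{i+1}$ and split the index set by parity. For distinct $i \neq i'$ of the same parity, $|i-i'| \geq 2$ and $|(i+1)-(i'+1)| \geq 2$, so
\[
x_i^* x_{i'} = r_{i+1}(r_i r_{i'})r_{i'+1} = 0, \qquad x_i x_{i'}^* = r_i(r_{i+1} r_{i'+1})r_{i'} = 0.
\]
These two identities imply that the positive operators $x_ix_i^*$ are pairwise mutually annihilating (and similarly for $x_i^*x_i$), so the standard $C^*$ computation gives $\|\sum_{i\text{ even},\,i\geq N} d_i x_i\| = \sup_{i\text{ even},\,i\geq N} |d_i|\,\|x_i\|$, which tends to $0$, and likewise for odd indices. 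Hence $\sum_i d_i x_i$ is norm-Cauchy with limit $S \in A$, and by adjointing $S^* = \sum_i \overline{d_i} x_i^* \in A$.

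Finally, to turn the formal identity into an equality in $M(A)$, I compare partial sums. With $U_n := \sum_{i<n}\alpha(i) r_i$, a direct band-diagonal computation shows
\[
U_n U_n^* - p_n^2 \;=\; \sum_{i<n-1}\bigl(d_i x_i + \overline{d_i}\, x_i^*\bigr).
\]
The right side converges in norm to $S+S^*$. For the left side, $p_n \to 1$ strictly gives $p_n^2 \to 1$ strictly (using boundedness of $p_n$), and the same parity-orthogonality argument applied to $\sum\alpha(i)r_i a$ for $a\in A$ shows that $U_n \to u_\alpha$ strictly with uniform bound, hence $U_n U_n^* \to u_\alpha u_\alpha^*$ strictly. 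Matching the two limits gives $u_\alpha u_\alpha^* - 1 = S + S^* \in A$, as required. The main obstacle is the orthogonality-based norm estimate; once that is in hand, the remainder is routine strict-versus-norm bookkeeping.
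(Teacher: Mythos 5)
Your proof is correct, and it reaches the key norm estimate by a genuinely different route from the paper's. Both arguments begin by expanding $u_\alpha u_\alpha^*-1$ (using the band-diagonality $r_ir_j=0$ for $|i-j|\geq 2$) into a series supported on nearest-neighbour products $r_ir_{i+1}$, and both then show that the tails of this series converge to $0$ in norm, so the sum lies in $A$. At that point the methods diverge. The paper first uses the commutativity of the $r_i$ to merge the $(i,i+1)$ and $(i+1,i)$ cross terms into the single real expression $2\left[\Re(\alpha(i)\overline{\alpha(i+1)})-1\right]r_ir_{i+1}$, and then bounds the tail by passing to the Gelfand spectrum of $C^*(\{r_i\})$: since each point of the spectrum can meet the support of only a bounded number of the products $r_ir_{i+1}$, the pointwise sup of the tail is controlled by $\sup_{i>N}|\Re(\alpha(i)\overline{\alpha(i+1)})-1|$. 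You instead leave the two cross sums $\sum d_i x_i$ and $\sum\overline{d_i}x_i^*$ separate, split each by parity, and use the identities $x_i^*x_{i'}=x_ix_{i'}^*=0$ for same-parity $i\neq i'$ to invoke the standard orthogonal-sum formula $\|\sum \lambda_i x_i\|=\sup_i|\lambda_i|\|x_i\|$. This avoids both the commutativity of the $r_i$ and the Gelfand-representation step, so it is a bit more elementary and would survive in a setting where the $r_i$ need not commute; the paper's version is shorter once commutativity is granted. You also make explicit the strict-topology bookkeeping (comparing the partial sums $U_nU_n^*-p_n^2$ with their norm limit, and reducing $u_\alpha^*u_\alpha-1\in A$ to the case of $\overline\alpha$) that the paper leaves implicit; this is a welcome clarification but not a substantive difference.
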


\begin{proof}
  Recall that $r_ir_j=0$ for $\abs{i-j}\geq2$.  Hence we have:
  \begin{align*}
    u_\alpha u_\alpha^*-1
    &=\sum\alpha(i)r_i\sum\overline{\alpha(i)}r_i-\sum r_i\sum r_i\\
    &=\sum\left[2\Re(\alpha(i)\overline{\alpha(i+1)})r_ir_{i+1}+r_i^2\right]
      -\sum\left[2r_ir_{i+1}+r_i^2\right]\\
    &=\sum2\left[\Re(\alpha(i)\overline{\alpha(i+1)})-1\right]r_ir_{i+1}\;.
  \end{align*}
  Since every partial sum lies in $A$, it is enough to show that the
  tails of this last series converge to zero in norm.  By our
  hypothesis, given $\epsilon$, we can find $N$ such that $i>N$
  implies
  \[2\abs{\Re(\alpha(i)\overline{\alpha(i+1)})-1}<\epsilon\;.
  \]
  Since the $r_i$ commute, we can regard them as complex-valued
  functions on the Gelfand space $X$ of $C^*(\set{r_i})$.  Since
  $r_jr_k=0$ whenever $\abs{j-k}\geq2$, each $x\in X$ can only lie in
  the support of at most three of the terms $r_ir_{i+1}$.  Hence we
  can bound the tail
  \[\norm{\sum_{n>N}2\left[\Re(\alpha(i)\overline{\alpha(i+1)})-1\right]
    r_ir_{i+1}}<3\epsilon\;,
  \]
  as desired.
\end{proof}

Thus, if we let $Z\subset\TT^\NN$ be the set of $\alpha$ such that
$\alpha(i+1)-\alpha(i)\rightarrow0$, we have that for all $\alpha\in
Z$ the conjugation $\Ad\dot u_\alpha$ defines an element of
$\Aut(\cq(A))$.  In order to proceed, we must argue that $Z$ is large
enough that there are still $2^{\aleph_1}$ many elements of
$\ilim\bfG_X$ consisting just of elements of $Z$.

\begin{lemma}
  Under the hypotheses of Theorem~\ref{thm:ilim}, there are more than
  continuum many threads through $\ilim\bfG_X$ of the form
  $(\alpha_X)_{X\in\mathcal U}$ where $\alpha_X\in Z$.
\end{lemma}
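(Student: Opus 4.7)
The plan is to re-inspect the tree construction in the proof of Theorem~\ref{thm:ilim} and verify that the elements $\alpha_s$ can always be chosen to lie in $Z$. Since $Z$ is a subgroup of $\TT^\NN$ (immediate from the bound $|(\alpha\beta^{-1})(i+1) - (\alpha\beta^{-1})(i)| \le |\alpha(i+1)-\alpha(i)| + |\beta(i+1)-\beta(i)|$), it is enough to check that the successor and limit stage constructions each produce elements of $Z$ when the inductive hypothesis ensures earlier ones do.

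At a successor stage, the witness $\beta \in \bfF_\xi \setminus \bfF_{\xi+1}$ constructed explicitly in Theorem~\ref{thm:ilim} is already in $Z$: it is constant on each $I(X_\xi, j)$ with a jump of size $|e^{i\pi/m}-1|$ at the boundary $n(X_\xi,j+1)$ whenever $I(X_\xi,j) \subset I(X_{\xi+1}, n(m))$, and no jump otherwise. Since every sufficiently large $j$ lies in some $I(X_{\xi+1}, n(m))$ with arbitrarily large $m$, these jumps tend to $0$. Then the children $\alpha_{s^\frown 0} = \alpha_s$ and $\alpha_{s^\frown 1} = \alpha_s\beta$ both lie in $Z$.

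The hard part will be the limit stage, where the proof defines $\alpha_s(i) = \gamma_n \alpha_n(i)$ piecewise on segments $I(X_\infty, n) \cup \{n(X_\infty, n+1)\}$, with $\gamma_{n+1}$ forced by continuity at $n(X_\infty, n+1)$. Within each segment the consecutive difference of $\alpha_s$ equals $\gamma_n$ times that of $\alpha_n$; the delicate point is that at the single boundary $i = n(X_\infty, n+1)$, the design of $\gamma_{n+1}$ collapses the jump to exactly $\gamma_{n+1}(\alpha_{n+1}(i+1) - \alpha_{n+1}(i))$, a single consecutive difference of $\alpha_{n+1}$. I would then add the following requirement to the thinning of $X_\infty$: for each $n$, let $N_n$ be large enough that $|\alpha_m(i+1)-\alpha_m(i)| < 1/n$ for all $m \le n$ and all $i \ge N_n$ (which exists since each $\alpha_m \in Z$ by induction), and demand $n(X_\infty, n) \ge N_n$. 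This is compatible with the thinning already used for \eqref{e:xinfty} and \eqref{e:xinfty2}, and it forces every consecutive difference of $\alpha_s$ past index $n(X_\infty, n)$ to be $< 1/n$, so $\alpha_s \in Z$.

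Once the tree is built inside $Z$, its distinct branches (guaranteed by $\alpha_{s^\frown 0}\alpha_{s^\frown 1}^{-1} \notin \bfF_{\xi+1}$) yield $2^{\aleph_1}$ distinct threads in $\ilim\bfG_X$ whose representatives all lie in $Z$. Under CH (or even the weak form $2^{\aleph_0} < 2^{\aleph_1}$), this exceeds the continuum, as required.
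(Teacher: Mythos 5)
Your proposal is correct and follows essentially the same route as the paper's proof: re-inspect the construction of Theorem~\ref{thm:ilim}, observe that the successor-stage witness is already in $Z$ because its jumps have size $\abs{e^{i\pi/m}-1}\to 0$, and at limit stages impose one more thinning requirement on $X_\infty$ (that consecutive differences of $\alpha_n$ on $I(X_\infty,n)$ be $<1/n$) so that $\alpha_s\in Z$. Your remarks that $Z$ is a subgroup and your explicit tracking of the single boundary jump at $n(X_\infty,n+1)$ are slightly more detailed than the paper's presentation, but they fill in exactly the details the paper leaves implicit, so this is the same argument.
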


\begin{proof}
  We must simply inspect the construction given in the proof of
  Theorem~\ref{thm:ilim}, and check that it can be carried out with
  the additional condition:
  \begin{itemize}
  \item $\alpha_s(i)-\alpha_s(i+1)\rightarrow0$.
  \end{itemize}
  For the successor step, this is essentially immediate.  Indeed,
  using the notation of Theorem~\ref{thm:ilim}, recall that the
  witness $\alpha\in\bfF_\xi\minus\bfF_{\xi+1}$ which we produced
  satisfies $\abs{\alpha(n)-\alpha(n+1)}\leq\pi/m$ for $n\in
  I(X_{\xi+1},n(m))$ and $\abs{\alpha(n)-\alpha(n+1)}=0$ elsewhere.

  For the inductive step, recall that given $\alpha_n$ we constructed
  a set $X_\infty$, an element $\alpha_s$, and constants $\gamma_n$
  such that $\alpha_s(i)=\gamma_n\alpha_n(i)$ for all $i\in
  I(X_\infty,n)$.  Assuming additionally that the $\alpha_n$ satisfy
  $\alpha_n(i)-\alpha_n(i+1)\rightarrow0$, we can achieve the same for
  $\alpha_s$ by simply thinning out $X_\infty$ in advance so that
  $\abs{\alpha_n(i)-\alpha_n(i+1)}<1/n$ for all $i\in I(X_\infty,n)$.
\end{proof}

Finally, it is easy to see that the proofs of
Lemma~\ref{lem:welldefined}, Lemma~\ref{lem:patch}, and the conclusion
of the proof of Theorem~\ref{thm:corona} together yield an injection
from the set of threads through $\ilim\bfG_X$ consisting of elements
of $Z$ into $\Aut \cq(A)$.  This concludes the proof of
Theorem~\ref{thm:corona-weak}.

\section{Algebras satisfying our hypotheses}
\label{S.Algebras}

In this section, we give a series of conditions on a C*-algebra $A$
which are sufficient to guarantee that $A$ satisfies either
Hypothesis~\ref{hyp:A} or Hypothesis~\ref{hyp:weak}.  In particular,
we complete the proof of the main theorem (Theorem~\ref{thm:main}) by
showing that each of its hypotheses (1)--(4) is sufficient as well.

In this section, we will always assume that $A$ is $\sigma$-unital.

\begin{proposition}
  \label{P1}
  If $A$ has a $\sigma$-unital, non-unital quotient with a faithful
  irreducible representation, then $A$ satisfies
  Hypothesis~\ref{hyp:weak}.
\end{proposition}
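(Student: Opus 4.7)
The plan is to choose an ideal $J$ of $A$ so that $A/J$ is $\sigma$-unital, non-unital, and admits a faithful irreducible representation $\pi\colon A/J\to B(\mathcal H)$, and then to build every $r_i$ as a continuous functional calculus image of a single strictly positive contraction $h\in A_+$. Because the image in any non-zero quotient of a strictly positive element is again strictly positive, this automatically gives a commuting family in $A$ whose partial sums form an approximate unit satisfying the rigid relation $p_{n+1}p_n=p_n$ demanded by the second bullet of Hypothesis~\ref{hyp:weak}.

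For the functional-calculus setup, I will use that $\overline h\in A/J$ is strictly positive and $A/J$ is non-unital, which forces $0$ to be an accumulation point of $\sigma(\overline h)$. Fix a strictly decreasing sequence $\lambda_1>\lambda_2>\cdots\to 0$ in $\sigma(\overline h)\cap(0,\norm{h}]$, and choose piecewise-linear functions $s_n\colon[0,\norm{h}]\to[0,1]$ of the form ``zero on $[0,\alpha_n]$, linear from $0$ to $1$ on $[\alpha_n,\beta_n]$, and $1$ on $[\beta_n,\norm{h}]$,'' with parameters arranged so that $\beta_{n+1}\leq\lambda_n\leq\alpha_n$ and $\alpha_n,\beta_n\to 0$. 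Setting $r_i=(s_{i+1}-s_i)(h)$ and $p_n=s_n(h)$, I plan to check directly that each $r_i$ is a positive contraction, that $p_n=\sum_{i<n}r_i$ is an approximate unit for $A$ (since $h$ is strictly positive and $ts_n(t)\to t$ uniformly on $[0,\norm{h}]$), that $p_{n+1}p_n=(s_{n+1}s_n)(h)=s_n(h)=p_n$ by the functional calculus (using that $s_{n+1}$ is identically $1$ on the support of $s_n$), and crucially that $\norm{\overline{r_i}}=1$ in $A/J$, because $\lambda_i$ lies both in the plateau $\set{s_{i+1}-s_i=1}$ and in $\sigma(\overline h)$.

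For the overlap condition, I will combine the spectral normalization from the previous step with Kadison transitivity. Given $i,j,k$ and $\epsilon>0$: since $\overline{r_i}$ and $\overline{r_j}$ are positive contractions of norm $1$ in $A/J$, the same is true of $\pi(\overline{r_i})^k$ and $\pi(\overline{r_j})^k$, so I can pick unit vectors $\xi,\eta\in\mathcal H$ with $\norm{\pi(\overline{r_j})^k\xi}$ and $\norm{\pi(\overline{r_i})^k\eta}$ each within $\epsilon'$ of $1$. The standard estimate $\norm{(1-T)v}^2\leq 2(\norm{v}^2-\norm{Tv}^2)$ for positive contractions $T$ then forces $\pi(\overline{r_j})^k\xi\approx\xi$ and $\pi(\overline{r_i})^k\eta\approx\eta$. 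Applying Kadison transitivity to the irreducible representation $\pi$, I can choose $b\in A/J$ with $\norm{b}=1$ and $\pi(b)\xi=\eta$; chaining the approximations produces $\norm{\pi(\overline{r_i}^k b\overline{r_j}^k)\xi}\geq 1-O(\sqrt{\epsilon'})$. Faithfulness of $\pi$ then transfers this lower bound to $A/J$, and lifting $b$ to $a'\in A$ with $\norm{a'}\leq 1+\epsilon'$ (possible by the definition of the quotient norm) and setting $a=a'/\norm{a'}$ produces the required element $a\in A$.

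The main obstacle will be balancing two competing constraints on the construction: the algebraic requirement $p_{n+1}p_n=p_n$ pushes toward a rigid functional-calculus construction from a single element, whereas the overlap condition demands that each $\overline{r_i}$ have norm exactly $1$ in $A/J$ --- a spectral condition on $\overline h$. The reason both can simultaneously be arranged is exactly the hypothesis that $A/J$ is non-unital: strict positivity of $\overline h$ in a non-unital C*-algebra forces $0$ to be an accumulation point of $\sigma(\overline h)$, which provides infinitely many spectral points through which to thread the plateaus of the bump functions $s_{i+1}-s_i$.
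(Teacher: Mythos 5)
Your argument is correct and follows the same strategy as the paper: both verify the overlap condition of Hypothesis~\ref{hyp:weak} by pushing into the faithful irreducible representation $\pi$ of the quotient and invoking Kadison's transitivity theorem to produce the required element $a$. Where you differ is in being considerably more explicit about the approximate unit $(r_i)$. The paper simply posits a sequence of norm-one contractions whose partial sums form an approximate unit and then immediately uses, without comment, that unit vectors $\xi_i$ exist with $\norm{\xi_i-\pi(r_i^k)\xi_i}<\delta$ for every $k$; this requires $\norm{\pi(r_i)}=1$ exactly, which is not automatic once $\ker\pi$ is nontrivial. Your functional-calculus construction from a strictly positive $h\in A$, with the plateau $\set{s_{i+1}-s_i=1}$ threaded through a spectral point $\lambda_i\in\sigma(\overline h)$, is precisely what secures both this normalization and the relation $p_{n+1}p_n=p_n$, and the observation that non-unitality of $A/J$ forces $0$ to be a limit point of $\sigma(\overline h)$ is the right reason the construction can be carried out. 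Two minor quibbles: the estimate $\norm{(1-T)v}^2\leq 2(\norm v^2-\norm{Tv}^2)$ is fine but the sharper bound $\norm{(1-T)v}^2\leq\norm v^2-\norm{Tv}^2$ (from $T^2\leq T$) holds with no factor of $2$; and your indexing $\beta_{n+1}\leq\lambda_n\leq\alpha_n$ starts at $n=1$ and so does not explicitly place a spectral point in the plateau of $r_0=(s_1-s_0)(h)$ --- you should either start the $\lambda$'s at $0$, or shift the $s_n$'s, so that every $\overline{r_i}$ is covered. Also, Kadison transitivity gives $\norm b$ only arbitrarily close to $1$ rather than equal to $1$; you handle this correctly when you lift to $a'$ with $\norm{a'}\leq1+\epsilon'$ and renormalize, so the earlier ``$\norm b=1$'' should just say ``$\norm b\leq 1+\epsilon'$.''
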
 

By Theorems~\ref{thm:ilim} and~\ref{thm:corona-weak}, this completes
the proof of Theorem~\ref{thm:main} in the case that
condition (4) holds.  And clearly, condition (1) is a special case of
condition (4).

\begin{proof}[Proof of Proposotion~\ref{P1}]
  Let $\pi\colon A\to \cB(H)$ be an irreducible representation such
  that $\pi[A]$ is non-unital. Let $r_j$, for $j\in \bbN$, be
  contractions of norm 1 such that $p_n=\sum_{j< n} r_j$ for $n\in
  \bbN$ form an approximate unit for $A$.

  To verify Hypothesis~\ref{hyp:weak}, fix $\epsilon>0$, $i<j$ and
  $k$, and choose $\delta$ small enough that
  $(1-\delta)^2/(1+\delta)>1-\epsilon$.  Fix unit vectors $\xi_i$ and
  $\xi_j$ in $H$ such that $\|\xi_i-\pi(r_i^k) \xi_i\|<\delta$ and
  $\|\xi_j-\pi(r_j^k)\xi_j \|<\delta$.  By Kadison's Transitivity
  Theorem (see for instance \cite[II.6.1.12]{Black:Operator}) we can
  find $a\in A$ of norm $\leq 1+\delta$ such that $\pi(a r_i^k)
  \xi_i=(1-\delta) \xi_j$.  Then
  \[\norm{r_j^k\,a\,r_i^k}\geq\norm{\pi(r_j^k\,a\,r_i^k)\xi_i}
  \geq (1-\delta)^2>1-\epsilon
  \]
  and it follows that $\frac 1{1+\delta}a$ is as required.
\end{proof} 

\begin{proposition}
  \label{P2}
  If $A$ is simple and has an approximate unit consisting of
  projections, then $A$ satisfies Hypothesis~\ref{hyp:A}.
\end{proposition}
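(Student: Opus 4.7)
The plan is to upgrade the given approximate unit of projections to a \emph{strictly increasing} one, take successive differences to produce the orthogonal sequence $r_n$, and then invoke simplicity for the non-vanishing condition $r_iAr_j\neq 0$. The implicit ambient assumption that $A$ is non-unital (otherwise the corona is trivial and there is nothing to prove) will force the strict increase.

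The main work is producing a strictly increasing approximate unit of projections $(q_n)\subset A$. Let $(e_n)$ be the given AU of projections and $(a_k)$ a countable dense subset of $A$. I would build $q_n$ recursively, arranging $q_1\leq\cdots\leq q_n$ with $\|q_na_k-a_k\|<1/n$ for $k\leq n$. Given $q_n$, choose $m$ so large that $e_m$ dominates $a_1,\ldots,a_{n+1}$ to within $2^{-n-1}$ and $\|e_mq_n-q_n\|$ is as small as desired. The positive element $e_mq_ne_m\in e_mAe_m$ is then norm-close to $q_n$, so continuous functional calculus via the characteristic function of $[1/2,1]$ yields a projection $q_n'\leq e_m$ in $A$ with $\|q_n'-q_n\|$ small. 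Since close projections are intertwined by a unitary near $1$, there exists $u\in\tilde A$ with $uq_nu^*=q_n'$. Setting $q_{n+1}=u^*e_mu$, this lies in $A$ because $A$ is an ideal of $\tilde A$, and $q_n=u^*q_n'u\leq u^*e_mu=q_{n+1}$, while $q_{n+1}$ remains norm-close to $e_m$ and hence inherits the AU estimate. Strict increase $q_n\neq q_{n+1}$ is automatic since a non-unital algebra admits no projection acting as identity.

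With $(q_n)$ in hand, set $q_0=0$ and $r_n=q_{n+1}-q_n$. Each $r_n$ is a nonzero projection, and the $r_n$ are pairwise orthogonal by direct computation from $q_iq_j=q_{\min(i,j)}$; their partial sums telescope to $q_n$ and therefore form the required AU. For the condition $r_iAr_j\neq 0$: by simplicity, the closed two-sided ideal $\overline{Ar_jA}$ generated by the nonzero projection $r_j$ is all of $A$; so if $r_iAr_j=0$ were to hold, then $r_i\cdot Ar_jA=0$ and by continuity $r_iA=0$, contradicting the fact that $r_i=r_i^2\in r_iA$ is nonzero.

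The main obstacle is the promotion of the original AU to an increasing one. Although the perturbation-and-conjugation trick is standard C*-algebraic folklore, it rests on two separate ingredients: producing a subprojection of $e_m$ close to $q_n$ via continuous functional calculus on $e_mq_ne_m$, and transferring back along a unitary close to $1$ in $\tilde A$. The telescoping identification and the simplicity step at the end are essentially formal, and simplicity itself enters the argument only in the final line.
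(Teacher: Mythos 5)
Your proof is correct, but it diverges from the paper's on the key non-degeneracy step, and it also spells out a construction the paper leaves implicit. The paper's proof is a two-liner: since $A$ is simple it has a faithful irreducible representation, take projections $r_i$ whose partial sums form an approximate unit, observe $r_i^k = r_i$, and invoke the Kadison transitivity argument already written out in the proof of Proposition~\ref{P1} to get the (stronger) near-norm-one condition, which in particular gives $r_iAr_j\neq 0$. You instead verify $r_iAr_j\neq 0$ by a purely algebraic ideal argument: the closed ideal generated by the nonzero projection $r_j$ is all of $A$, so $r_iAr_j=0$ would force $r_iA=0$, contradicting $r_i=r_i^3\in r_iA$. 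This is more elementary --- it avoids representation theory and Kadison transitivity entirely and uses simplicity in its most basic form --- at the cost of proving only what Hypothesis~\ref{hyp:A} literally asks for, whereas the paper reuses the Proposition~\ref{P1} machinery wholesale and gets the quantitatively stronger estimate for free. Both are adequate for the statement.

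You also make explicit the upgrade from an approximate unit of projections to an increasing one, and then take successive differences to get the orthogonal $r_n$; the paper takes this as standard C*-folklore and just writes ``Let $r_i$ be projections such that their partial sums form an approximate unit.'' Your construction (perturb, functional calculus to get a subprojection of $e_m$ near $q_n$, conjugate by a unitary near $1$ in $\widetilde A$) is the standard one and is correct. One small looseness: ``Strict increase $q_n\neq q_{n+1}$ is automatic since a non-unital algebra admits no projection acting as identity'' doesn't by itself rule out $q_{n+1}=q_n$ at a single step --- equality at one stage does not make $q_n$ a unit. What non-unitality does give is that the increasing sequence $(q_n)$ cannot eventually stabilize (a stable limit would be a unit), so one either passes to a strictly increasing subsequence at the end, or at each stage chooses $m$ large enough that $e_m$ acts better on some fixed element than $q_n$ does, forcing $q_{n+1}\neq q_n$. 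This is easily repaired and does not affect the argument.
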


\begin{proof}
  Since $A$ is simple it has a faithful irreducible representation.
  Let $r_i$, for $i\in\bbN$, be projections such that their partial
  sums form an approximate unit for $A$. Since $r_i^k=r_i$ for all $i$
  and all $k\geq 1$, applying the proof of Proposition~\ref{P1} to
  projections $r_i$, for $i\in \bbN$, we verify that they satisfy
  Hypothesis~\ref{hyp:A}.
\end{proof} 

\begin{proposition}
  \label{P3}
  If $A$ satisfies Hypothesis~\ref{hyp:weak} and $B$ is a
  $\sigma$-unital C*-algebra then $A\otimes B$ satisfies
  Hypothesis~\ref{hyp:weak} for any product norm on $A\otimes B$.
\end{proposition}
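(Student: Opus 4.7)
The plan is to build an approximate unit for $A\otimes B$ directly from the sequence $(r_i)$ witnessing Hypothesis~\ref{hyp:weak} for $A$ together with a carefully chosen approximate unit for $B$.  Using $\sigma$-unitality of $B$, I fix a strictly positive contraction $b_0\in B$ with $\|b_0\|=1$ and choose continuous functions $g_n\colon[0,1]\to[0,1]$ with $g_n(0)=0$, $g_n\equiv 1$ on $[1/n,1]$, and $g_{n+1}g_n=g_n$; set $f_n=g_n(b_0)$.  Then $(f_n)$ is an increasing approximate unit for $B$ satisfying $f_{n+1}f_n=f_n$.  In $A\otimes B$ (with any product C$^*$-norm) I define $P_n=p_n\otimes f_n$ and $R_i=P_{i+1}-P_i$.

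The second bullet of Hypothesis~\ref{hyp:weak} for $(R_i)$ is routine: the rewriting $R_i=p_i\otimes(f_{i+1}-f_i)+r_i\otimes f_{i+1}$ makes positivity of each $R_i$ visible, $P_{n+1}P_n=(p_{n+1}p_n)\otimes(f_{n+1}f_n)=p_n\otimes f_n=P_n$ is direct, and $(P_n)$ being an increasing approximate unit for $A\otimes B$ follows by approximating on elementary tensors.

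For the first bullet, fix $i,j,k,\epsilon$.  Hypothesis~\ref{hyp:weak} on $A$ supplies $a\in A$ with $\|a\|=1$ and $\|r_i^k\,a\,r_j^k\|\geq 1-\epsilon$.  In $B$ I would pick a continuous $h\colon[0,1]\to[0,1]$ vanishing at $0$, supported in $[1/\min(i,j),1]$, with $h(1)=1$, and set $b=h(b_0)$.  Since $\|b_0\|=1\in\mathrm{spec}(b_0)$ we have $\|b\|=1$, and since $g_{\min(i,j)}\equiv 1$ on $\mathrm{supp}(h)$, $f_{\min(i,j)}\,b=b$.  From $f_mf_n=f_{\min(m,n)}$ (a consequence of $f_{n+1}f_n=f_n$) it follows that $f_mb=b$ for every $m\geq\min(i,j)$; in particular $(f_{i+1}-f_i)b=b(f_{j+1}-f_j)=0$ while $f_{i+1}b=bf_{j+1}=b$.

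Setting $c=a\otimes b$ gives $\|c\|=1$ by the cross-norm property.  A short induction on $k$ using the decomposition of $R_i$ shows $R_i^k(a\otimes b)=r_i^k a\otimes b$: the $p_i\otimes(f_{i+1}-f_i)$ summand is annihilated at every step, and the $r_i\otimes f_{i+1}$ summand propagates by $f_{i+1}b=b$.  A symmetric computation on the right yields $R_i^k\,c\,R_j^k=r_i^k\,a\,r_j^k\otimes b$, whose norm is $\|r_i^k a r_j^k\|\geq 1-\epsilon$, completing the verification of Hypothesis~\ref{hyp:weak} for $A\otimes B$.  The main technical obstacle is ensuring that the auxiliary element $b\in B$ with $\|b\|=1$ and $f_{\min(i,j)}b=b$ genuinely exists for every $(i,j)$; the stipulations $\|b_0\|=1$ and $g_n\equiv 1$ on $[1/n,1]$ are chosen precisely to make this available.
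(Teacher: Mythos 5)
Your proof is correct, but it takes a different route from the paper's. Both arguments use the same decomposition of the approximate unit for $A\otimes B$ into differences $p_{i+1}\otimes(\cdot)_{i+1}-p_i\otimes(\cdot)_i$; where they diverge is in what structure is imposed on the $B$-side and how the norm estimate is then extracted. The paper takes an \emph{arbitrary} increasing approximate unit $q_n$ for $B$, sets $s_i=p_{i+1}\otimes q_{i+1}-p_i\otimes q_i$, and then, to bound $\|s_i^k(a\otimes q_j)s_j^k\|$ from below, applies the slice map $\id\otimes\phi$ for a pure state $\phi$ of $B$ with $\phi(q_i)=1$. Because $0\le q_i\le q_{i+1}\le q_j\le q_{j+1}\le 1$, the state $\phi$ absorbs all four $q$'s (a Cauchy--Schwarz argument), so the slice map sends $s_i^k(a\otimes q_j)s_j^k$ exactly to $r_i^k\,a\,r_j^k$, and contractivity of $\id\otimes\phi$ finishes the estimate. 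You instead \emph{build} a special approximate unit $f_n=g_n(b_0)$ via functional calculus on a norm-one strictly positive element $b_0\in B$, arranging $f_{n+1}f_n=f_n$, and then exhibit a concrete test element $b=h(b_0)$ on which the $(f_{i+1}-f_i)$-tails vanish exactly, so that $R_i^k(a\otimes b)R_j^k=r_i^k\,a\,r_j^k\otimes b$ holds as an algebraic identity and the cross-norm property gives the bound. Your approach is more elementary (no appeal to pure states or completely positive slice maps) at the cost of the functional-calculus preparation; the paper's is shorter and makes no special demands on the approximate unit of $B$ beyond monotonicity. Both are valid, and both correctly use $\sigma$-unitality of $B$ (you via the strictly positive element, the paper via a sequential approximate unit).
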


This completes the proof of the main theorem~\ref{thm:main} in the
case that condition (3) holds.  And again, since Proposition~\ref{P1}
implies that $\mathcal K$ satisfies Hypothesis~\ref{hyp:weak},
condition (2) is a special case of condition (3).

\begin{proof}[Proof of Proposition~\ref{P3}]
  Let $r_i\in A$ witness that $A$ satisfies Hypothesis~\ref{hyp:weak}
  and let $p_n=\sum_{i<n}r_i$.  Let $q_n$ be an increasing approximate
  unit for $B$.  We claim that
  \[
  s_i=p_{i+1}\otimes q_{i+1}\;-\;p_i\otimes q_i
  \]
  witness that $A\otimes B$ satisfies Hypothesis~\ref{hyp:weak}.
  Indeed, fix $i<j$, $k$ and $\epsilon>0$.  Then there is $a\in A$
  such that $\norm{a}=1$ and $\norm{r_i^k\,a\,r_j^k}\geq1-\epsilon$.
  We will show that $a\otimes q_j$ satisfies $\norm{s_i^k(a\otimes
    q_j)\,s_j^k}\geq1$.

  To see this, fix a pure state $\phi$ of $B$ such that
  $|\phi(q_i)|=1$.  Since $q_i\leq q_j$, this implies $|\phi(q_j)|=1$.
  Then $\id\otimes \phi$ is a completely positive, contraction mapping
  from $A\otimes B$ into $A$. A straightforward computation shows that
  \[(\id\otimes\phi)(s_i^k(a\otimes q_j)s_j^k)=(p_{i+1}-p_i)^ka(p_{j+1}-p_j)^k
  \]
  and since $\id\otimes \phi$ is a contraction we conclude that $\|
  s_i^k (a\otimes q_j) s_j^k\|\geq 1-\epsilon$.
\end{proof}

We close this section with the following additional special case.
Here, an element $h$ of a C*-algebra $A$ is \emph{strictly positive}
if $ah\neq 0$ for all nonzero $a\in A$.

\begin{proposition}
  Assume $A$ has a subalgebra $B$ that satisfies
  Hypothesis~\ref{hyp:weak} and that $B$ contains an element which is
  strictly positive in $A$. Then $A$ satisfies
  Hypothesis~\ref{hyp:weak}.
\end{proposition}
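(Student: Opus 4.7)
The plan is to reuse the same witnessing sequence $(r_i)_{i\in\bbN}\subset B$ inside $A$. Let $p_n=\sum_{i<n}r_i$ and let $h\in B$ be the element that is strictly positive in $A$. Since $B\subseteq A$, each $r_i$ is positive in $A$, and the algebraic conditions $p_n\leq p_{n+1}$ and $p_{n+1}p_n=p_n$ pass from $B$ to $A$ unchanged. The first bulleted clause of Hypothesis~\ref{hyp:weak} also transfers automatically: for any indices $i,j,k$ and any $\epsilon>0$, a witness $a\in B$ satisfying $\norm{a}=1$ and $\norm{r_i^k\,a\,r_j^k}\geq 1-\epsilon$ is already an element of $A$.

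The only substantive step is to verify that $(p_n)$ remains an approximate unit for all of $A$, not merely for $B$. This is precisely where the strict positivity of $h$ enters. Recall the standard characterization that $h\geq0$ is strictly positive in $A$ if and only if $\overline{hA}=\overline{Ah}=A$; thus every element of $A$ can be norm-approximated by products of the form $ch$ with $c\in A$.

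Given $a\in A$ and $\epsilon>0$, I would use this density to find $c\in A$ with $\norm{a-ch}<\epsilon/3$. Since $h\in B$ and $(p_n)$ is an approximate unit for $B$, we have $p_nh\to h$ in norm, so for $n$ large enough $\norm{c}\cdot\norm{p_nh-h}<\epsilon/3$. Combining these estimates with $\norm{p_n}\leq 1$ and the triangle inequality yields $\norm{p_n\,a-a}<\epsilon$, and the proof that $\norm{a\,p_n-a}\to 0$ is symmetric (or follows by taking adjoints, using $\overline{hA}=A$).

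I do not anticipate any real obstacle here: the argument is a routine three-term triangle inequality combined with the standard density characterization of strictly positive elements, and every other requirement of Hypothesis~\ref{hyp:weak} descends directly from $B$ to $A$.
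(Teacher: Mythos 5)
Your approach matches the paper's: reuse the witnessing sequence $(r_i)$ from $B$, observe that every clause of Hypothesis~\ref{hyp:weak} except the approximate-unit condition transfers verbatim from $B$ to $A$, and use the strictly positive element $h\in B$ to argue that $(p_n)$ remains an approximate unit for all of $A$. The paper states this last step as an immediate consequence of $h$ being strictly positive; you spell it out, which is fine.

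There is, however, a small left/right mix-up in your estimate. To bound $\norm{p_n\,a-a}$ you approximate $a\approx ch$ (i.e.\ $a\in\overline{Ah}$) and then claim the middle term $\norm{p_n\,ch-ch}$ is controlled by $\norm{c}\cdot\norm{p_n h-h}$. But $p_n\,ch-ch=(p_nc-c)h$, not $c(p_nh-h)$; since $p_n\in B$ and $c\in A$, there is no reason for $p_n$ to commute past $c$, so this bound does not follow. The fix is simply to take the other approximation: choose $c\in A$ with $\norm{a-hc}<\epsilon/3$ (using $\overline{hA}=A$). Then $p_n\,a-a$ differs from $(p_nh-h)c$ by at most $2\epsilon/3$, and $\norm{(p_nh-h)c}\leq\norm{p_nh-h}\norm{c}\to0$ because $(p_n)$ is an approximate unit for $B$ and $h\in B$. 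Symmetrically, the approximation $a\approx ch$ from $\overline{Ah}=A$ is what works for $\norm{a\,p_n-a}$. With the two roles swapped, your argument is correct and gives exactly the detail the paper elides.
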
 

\begin{proof}
  Let $p_i$, for $i\in \bbN$, be an approximate unit for $B$ such that
  $r_i=p_i-p_{i-1}$ (with $p_{-1}=0$) witness that $B$ satisfies
  Hypothesis~\ref{hyp:weak}.  Since $B$ contains an element which is
  strictly positive in $A$, $p_i$ is an approximate unit for $A$.
  Also, $r_i$ for $i\in \bbN$ clearly witness that $A$ satisfies
  Hypothesis~\ref{hyp:weak}.
\end{proof}

\section{Trivial automorphisms} 
\label{S.trivial}

In this section, we discuss the claim in the introduction that
Definition~\ref{Def.trivial} is the most comprehensive definition of a
trivial automorphism of a corona of a separable non-unital C*-algebra.
Several arguments in this section require some standard results from
descriptive set theory (see \emph{e.g.},~\cite{Ke:Classical}).

Recall that if $\Phi$ is an automorphism of $\cq(A)$ we write
\[\Gamma_\Phi=\{(a,b): \Phi(a/A)=b/A\}\;,
\]
and that $\Phi$ is said to be trivial if and only if
$\Gamma_\Phi$ is Borel.  

\begin{lemma}
  Assume $A$ is a separable, non-unital, C*-algebra.  Then the trivial
  automorphisms of $\cq(A)$ form a group.
\end{lemma}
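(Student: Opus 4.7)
The plan is to verify the three closure properties defining a subgroup: the identity is trivial, and triviality is preserved under inverses and composition. For the identity, $\Gamma_{\id} = \{(a,b) \in M(A)^2 : a - b \in A\}$ is the preimage of $A$ under the strictly continuous subtraction map, so the first step is to observe that $A$ is Borel (in fact $F_\sigma$) in $M(A)$ with the strict topology, since the closed unit ball of $A$ is strictly closed in that of $M(A)$. For inverses, $\Gamma_{\Phi^{-1}}$ is the image of $\Gamma_\Phi$ under the coordinate-swap self-homeomorphism of $M(A)^2$, and so Borelness is preserved automatically.

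The substantive step, which I expect to be the main obstacle, is closure under composition. Given trivial $\Phi$ and $\Psi$, one has the obvious existential description
\[
  \Gamma_{\Phi \circ \Psi} = \{(a,c) : \exists b\, [(a,b) \in \Gamma_\Psi \land (b,c) \in \Gamma_\Phi]\},
\]
which displays $\Gamma_{\Phi \circ \Psi}$ as analytic ($\Sigma^1_1$). The aim is to also produce a universal description giving co-analyticity ($\Pi^1_1$), so that Suslin's theorem forces the graph to be Borel.

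The key idea is that $\Psi$ descends to a genuine \emph{function} on the corona: if $(a,b)$ and $(a,b')$ both lie in $\Gamma_\Psi$ then $b - b' \in A$, and hence $\Phi(b/A) = \Phi(b'/A)$. Consequently I would re-express the graph as
\[
  (a,c) \in \Gamma_{\Phi \circ \Psi} \iff \forall b\, \bigl[(a,b) \in \Gamma_\Psi \rightarrow (b,c) \in \Gamma_\Phi\bigr],
\]
the forward implication using that at least one lift $b$ of $\Psi(a/A)$ exists so that the universal quantifier is nonvacuous. The right-hand side is manifestly $\Pi^1_1$, and combined with the $\Sigma^1_1$ presentation above, Suslin's theorem yields that $\Gamma_{\Phi \circ \Psi}$ is Borel, so $\Phi \circ \Psi$ is trivial. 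The heart of the argument is thus the trade of an existential for a universal quantifier, enabled by the fact that $\Psi$ is a well-defined map on the quotient rather than merely a relation.
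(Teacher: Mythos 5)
Your argument for closure under composition is essentially the paper's: write $\Gamma_{\Phi\circ\Psi}$ both as a $\Sigma^1_1$ set (existential quantifier over the intermediate lift $b$) and as a $\Pi^1_1$ set (universal quantifier), the trade being licensed because $\Psi$ is a well-defined function on the quotient, and then invoke Suslin's theorem. The paper treats inverses as immediate and does not mention the identity, so your version is marginally more complete, but there is no difference in the substance of the proof.

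One side remark is incorrect. You justify the Borelness of $\Gamma_{\id}$ by asserting that the closed unit ball of $A$ is strictly closed in that of $M(A)$. In fact, for non-unital $A$ the closed unit ball of $A$ is strictly \emph{dense} in the closed unit ball of $M(A)$; this is a standard feature of the strict topology, and it is precisely why the corona is nontrivial in the first place. So $A$ is not $F_\sigma$ in $M(A)$ for the reason you give, and the stated justification fails. The conclusion is still fine: $A$ is Borel in $M(A)$ (a short computation, though it lands higher in the Borel hierarchy), and in any case the identity is inner and the paper checks separately that all inner automorphisms are trivial. Only the reason offered needs repair, not the result.
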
 

\begin{proof}
  It is clear that $\Phi$ is trivial if and only if $\Phi^{-1}$ is
  trivial, so we only need to check that the composition of two
  trivial automorphisms $\Phi$ and $\Psi$ is trivial.  So suppose that
  $\Gamma_\Phi$ and $\Gamma_\Psi$ are Borel; we need to check that
  $\Gamma=\Gamma_{\Phi\circ \Psi}$ is Borel.  We shall use the fact
  that a subset of a Polish space is Borel if and only if it has a
  $\mathbf\Pi^1_1$ definition and a $\mathbf\Sigma^1_1$ definition
  (\cite[Theorem~14.11]{Ke:Classical}).  Clearly, a $\Sigma^1_1$
  definition is given by
  \[
  \Gamma=\{(a,c):(\exists b\in M(A))\ (a,b)\in \Gamma_\Psi\text{ and }
  (b,c)\in \Gamma_\Phi\}\;.
  \]
  Furthermore, since $\Phi\circ \Psi$ is an automorphism of $\cq(A)$
  we can write
  \[
  \Gamma=\{(a,c):(\forall b\in M(A))\ (a,b)\notin\Gamma_\Psi\text{ or
  } (b,c)\in \Gamma_\Phi\}
  \]
  which gives a $\Pi^1_1$ definition.
\end{proof} 

It is easy to see that every inner automorphism $\Phi$ of $\cq(A)$ is
trivial.  Indeed, if $\pi\colon M(A)\to \cq(A)$ denotes the quotient
map, let $v\in M(A)$ be such that $\Phi$ is conjugation by $\pi(v)$.
Then $\Gamma_\Phi=\{(a, b) : b-vav^*\in A\}$ is Borel.  We now show
that in the case of the Calkin algebra, an automorphism is inner if
and only if it is trivial (cf. \cite[Theorem~2.6]{Fa:All}).

\begin{lemma}
  \label{L.Calkin}
  An automorphism of the Calkin algebra is trivial if and only if it
  is inner.
\end{lemma}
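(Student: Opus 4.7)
The forward direction has already been verified in the paragraph preceding the lemma: if $\Phi=\Ad\pi(v)$ then $\Gamma_\Phi=\{(a,b):b-vav^*\in\mathcal K\}$ is closed in the strict topology, hence Borel. For the converse, suppose $\Phi$ is a trivial automorphism of $\cq(\mathcal H)$, so $\Gamma_\Phi\subset B(\mathcal H)^2$ is Borel with respect to the strict topology, whose restriction to any bounded ball of $B(\mathcal H)$ is Polish.

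The plan is to proceed in three steps. First, extract a $C$-measurable (and hence Baire-measurable) lifting $\tilde\Phi\colon B(\mathcal H)\to B(\mathcal H)$ satisfying $\pi\circ\tilde\Phi=\Phi\circ\pi$, where $\pi\colon B(\mathcal H)\to\cq(\mathcal H)$ is the quotient map, by applying the Jankov--von Neumann uniformization theorem to $\Gamma_\Phi$, whose projection onto the first coordinate is all of $B(\mathcal H)$. Second, invoke the classical automatic continuity principle that a Baire-measurable homomorphism between Polish groups is continuous; applied to $\tilde\Phi$ on bounded balls (after translating algebraic structure through the quotient), this upgrades $\Phi$ to a map that is strictly continuous modulo $\mathcal K$. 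Third, appeal to the structure theorem that any such strictly continuous automorphism of $\cq(\mathcal H)$ is spatially implemented, producing a unitary $u\in B(\mathcal H)$ with $\Phi=\Ad\pi(u)$.

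The third step carries the substantive content of the argument and is essentially Theorem~2.6 of \cite{Fa:All}. It draws on the rigidity of representations of $B(\mathcal H)$: one compares $\tilde\Phi$ to the identity representation of $\cq(\mathcal H)$ via Voiculescu's theorem, or equivalently constructs the implementing $u$ by diagonalizing $\tilde\Phi$ on a suitable maximal abelian subalgebra of $\mathcal K$ (after lifting an orthonormal basis to one on which $\tilde\Phi$ is approximately diagonal) and then propagating the relation $\tilde\Phi(a)-uau^*\in\mathcal K$ to all of $B(\mathcal H)$ by strict continuity. The main obstacle is precisely this third step, which converts the purely descriptive-set-theoretic regularity of $\Gamma_\Phi$ into the algebraic assertion that $\Phi$ admits a unitary implementer; steps one and two are standard machinery from descriptive set theory, but the absence of a softer analogue of Voiculescu's theorem in other coronas is what makes the parallel conclusion unavailable in the more general setting of Definition~\ref{Def.trivial}.
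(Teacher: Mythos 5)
Your argument takes a completely different route from the paper's. The paper's proof is a soft absoluteness argument: it observes that, with a Borel code for $\Gamma$ as a parameter, the assertion ``$\Gamma$ is the graph of an automorphism of $\cq(\mathcal H)$'' is $\Pi^1_2$, so by Shoenfield absoluteness the Borel code for $\Gamma_\Phi$ defines an automorphism $\tilde\Phi$ extending $\Phi$ in every forcing extension; the assertion ``$\tilde\Phi$ is inner'' is $\Sigma^1_2$ and hence also absolute; and since Theorem~1 of \cite{Fa:All} provides a forcing extension in which every automorphism of the Calkin algebra is inner, $\Phi$ must already be inner in the ground model. Nothing is lifted, no unitary is exhibited, and the hard analytic content of \cite{Fa:All} is invoked only as a black box through its relative-consistency statement.

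Your proposal is instead the direct route---essentially an attempt to re-derive the ZFC half of \cite{Fa:All}, that a definable automorphism is inner---and it has a genuine gap at step two. Pettis-type automatic continuity applies to an actual Baire-measurable \emph{homomorphism} between Polish groups; but the uniformizing map $\tilde\Phi$ produced in step one is only a lifting, satisfying $\pi\circ\tilde\Phi=\Phi\circ\pi$, and is neither additive nor multiplicative. It is a ``homomorphism modulo $\mathcal K$,'' and there is no off-the-shelf principle upgrading Baire measurability of such a map to strict continuity modulo $\mathcal K$. Moreover the domain is not even a Polish group: the strict-unit-ball of $B(\mathcal H)$ is a Polish space but not a group, and $\cq(\mathcal H)$ with the quotient topology is not Polish, so ``translating algebraic structure through the quotient'' does not rescue the argument. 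What actually converts a $\sigma(\Sigma^1_1)$-measurable lifting into an implementing unitary is the substantial argument of \cite[\S2]{Fa:All} (the material you gesture at in step three, involving stabilization and Voiculescu's theorem), and that argument operates directly on the measurable lifting rather than passing through a continuity upgrade. So your step three is where all the work lives and it does not factor through step two; if you want the direct proof, you should cite that result in place of steps two and three together, while the paper's absoluteness argument avoids the lifting machinery entirely.
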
 

\begin{proof}
  We need only to prove the direct implication.  Consider $\cB(H)^2$
  with respect to the product of strict topology.  Let $\Phi$ be an
  automorphism of the Calkin algebra such that $\Gamma_\Phi$ is Borel.
  A straightforward computation shows that for a Borel subset $\Gamma$
  of $\cB(H)^2$ the complexity of the assertion that
  $\Gamma=\Gamma_\Phi$ for some automorphism $\Phi$ of the Calkin
  algebra is at most~$\Pi^1_2$, with a code for $\Gamma$ as a
  parameter.  Hence by Shoenfield's absoluteness theorem
  (\cite[Theorem~13.15]{Kana:Book}), in every forcing extension one
  can use the Borel code for~$\Gamma_\Phi$ to define an automorphism
  of $\cq(H)$. This automorphism is an extension of $\Phi$ and we
  denote it by $\tilde\Phi$.  The assertion that $\tilde\Phi$ is inner
  is $\Sigma^1_2$ with a code for $\Gamma$ as a parameter, and again
  by Shoenfield's absoluteness theorem $\Phi$ is inner if and only if
  $\tilde\Phi$ is inner in all forcing extensions.  By
  \cite[Theorem~1]{Fa:All} there exists a forcing extension in which
  all automorphisms of the Calkin algebra are inner. Therefore $\Phi$
  is inner.
\end{proof}

The situation with coronas of abelian C*-algebras is similar.  In
\cite[\S4]{Fa:AQ} the second author considered trivial homeomorphisms
of \v Cech--Stone remainders of locally compact Polish spaces. Such
$F\colon \beta X\smallsetminus X\to \beta X\smallsetminus X$ is
\emph{trivial} if there are compact subsets $K$ and $L$ of $X$ and a
homeomorphism $f\colon X\smallsetminus K \to X\smallsetminus L$ such
that the continuous extension of $f$ to $\beta X$ agrees with $F$ on
$\beta X\smallsetminus X$. In \cite[\S 4]{Fa:AQ} it was proved that
the assertion ``all homeomorphisms of $\beta X\smallsetminus X$ are
trivial'' is relatively consistent with ZFC for all countable locally
compact spaces $X$.  Forcing axioms conjecturally imply all
homeomorphisms of \v Cech--Stone remainders of locally compact Polish
spaces are trivial (\cite[\S 4]{Fa:AQ}).  The absoluteness argument of
Lemma~\ref{L.Calkin} shows that for such $X$ and $A=\cc_0(X)$ an
automorphism of $\cq(A)$ is trivial if and only if the corresponding
homeomorphism of $\beta X\smallsetminus X$ is trivial.

It would be desirable to define trivial automorphisms of $\cq(A)$ as
those that have a representation with certain algebraic properties.
However, even in the case of inner automorphisms of the Calkin algebra
one cannot expect to have a representation that is an automorphism of
$M(A)$, or even a *-homomorphism of $M(A)$ into itself.  This
situation is analogous to the problem of whether ``topologically
trivial'' automorphisms of quotient Boolean algebras $\cP(\bbN)/I$ are
necessarily ``algebraically trivial'' (see \cite{Fa:Rigidity}).

Unlike \cite{Fa:Rigidity} or \cite{Fa:AQ} where the main theme was the
existence of isomorphisms between quotient structures, we have
considered only automorphisms of corona algebras. One reason for this
is that we are unable to answer the following question.

\begin{question}
  Are there separable non-unital C*-algebras $A$ and $B$ whose coronas
  are isomorphic, but there is no trivial isomorphism between them?
\end{question}

\bibliographystyle{amsplain}
\bibliography{scifbib}
\end{document}